\definecolor{mypink}{RGB}{255, 0, 128}
\definecolor{mypurple}{RGB}{64, 0, 128}
\theoremstyle{plain}
  \declaretheorem[within=section]{theorem}
  \declaretheorem[name={Theorem}]{maintheorem}
  \declaretheorem[numbered=no,name={Theorem}]{theorem*}
  \declaretheorem[sibling=theorem]{proposition}
  \declaretheorem[sibling=theorem]{lemma}
  \declaretheorem{claim}
  \declaretheorem[sibling=theorem]{corollary}
\theoremstyle{definition}
  \declaretheorem[sibling=theorem]{remark}
  \declaretheorem[sibling=theorem]{definition}
  \declaretheorem[sibling=theorem]{question}
  \declaretheorem[sibling=theorem,qed={$\diamondsuit$}]{example}
\newcommand{\idyll}{\mathrm{idyll}}
\newcommand{\oblpr}{\mathrm{oblpr}}
\newcommand{\vf}{\varphi}
\newcommand{\B}{\mathbf{B}}
\newcommand{\C}{\mathbf{C}}
\newcommand{\F}{\mathbf{F}}
\newcommand{\K}{\mathbf{K}}
\newcommand{\N}{\mathbf{N}}
\renewcommand{\P}{\mathbf{P}}
\newcommand{\Q}{\mathbf{Q}}
\newcommand{\R}{\mathbf{R}}
\renewcommand{\S}{\mathbf{S}}
\newcommand{\T}{\mathbf{T}}
\newcommand{\OC}{\mathcal{O}_{C}}
\newcommand{\TR}{ % Tropical Hyperfield
    {\mathbf{T}\!\!\mathbf{R}}
}
\newcommand{\TP}{ % Tropical Phase Hyperfield
    {\mathbf{T}\!\!\mathbf{P}}
}
\newcommand{\TC}{ % Tropical Complex Hyperfield
    {\mathbf{T}\!\mathbf{C}}
}
\newcommand{\Z}{\mathbf{Z}}
\newcommand{\lex}{\le_{\rm lex}}
\newcommand{\claimqed}{\hfill $\diamondsuit$\vspace{8pt}}
\DeclareMathOperator{\eq}{eq}
\DeclareMathOperator{\ev}{ev}
\DeclareMathOperator{\Ext}{Ext}
\DeclareMathOperator{\im}{im}
\DeclareMathOperator{\In}{in}
\DeclareMathOperator{\lc}{lc}
\DeclareMathOperator{\LCH}{LCH}
\DeclareMathOperator{\mult}{mult}
\DeclareMathOperator{\newt}{Newt}
\DeclareMathOperator{\sign}{sign}
\DeclareMathOperator*{\conv}{conv}
\DeclareMathOperator*{\bigboxplus}{\raisebox{-0.5em}{\scaleobj{2}{\boxplus}}}
\title[Tropical Extensions and B-L Multiplicities for Idylls]{Tropical Extensions and Baker-Lorscheid Multiplicities for Idylls}
\author{Trevor Gunn}
\email{\href{mailto:tgunn@gatech.edu}{tgunn@gatech.edu}}
\address{School of Mathematics, Georgia Institute of Technology, Atlanta, USA}
\begin{document}
\begin{abstract}
In a recent paper, Matthew Baker and Oliver Lorscheid showed that Descartes's Rule of Signs and Newton's Polygon Rule can both be interpreted as multiplicities of polynomials over hyperfields. Hyperfields are a generalization of fields which encode things like the arithmetic of signs or of absolute values. By looking at multiplicities of polynomials over such algebras, Baker and Lorscheid showed that you can recover the rules of Descartes and Newton.

In this paper, we define tropical extensions for idylls. Such extensions have appeared for semirings with negation symmetries in the work of Akian-Gaubert-Guterman and for hypergroups and hyperfields in the work of Bowler-Su. Examples of tropical extensions are extending the tropical hyperfield to higher ranks, or extending the hyperfield of signs to the tropical real hyperfield by including a valuation.

The results of this paper concern the interaction of multiplicities and tropical extensions. First, there is a lifting theorem from initial forms to the entire polynomial from which we will show that multiplicities for a polynomial are equal to the corresponding multiplicity for some initial form. Second, we show that tropical extensions preserve the property that the sum of all multiplicities is bounded by the degree. Consequentially, we have this degree bound for every stringent hyperfield. This gives a partial answer to a question posed by Baker and Lorscheid about which hyperfields have this property.

\bigskip

\noindent
MSC: 12K99 (Primary); 12D10, 12D15, 12J25, 12K99, 14T05 (Secondary)

\bigskip

\noindent
Keywords: Hyperfields, blueprints, valued fields, real-closed fields, multiplicities, Newton polygons, Descartes's rule
\end{abstract}
\maketitle

\section{Introduction}
Let $P$ be a polynomial over a field $K$, where $K$ has some additional structure like an absolute value or a total order. Two classical problems are determining the relationship between the absolute values or the signs of the coefficients and those of the roots. Newton's rule describes the relationship between a non-Archimedean valuation of the coefficients and of the roots. Descartes's rule describes the number of positive roots or negative roots with respect to the pattern of signs of the coefficients.

Recently, Matthew Baker and Oliver Lorscheid \cite{BL} put these kinds of questions into a common framework known as hyperfields, which are algebras%
\footnote{The term ``algebra'' is used in this paper in the broad sense of a set with some distinguished elements, operations and relations.}
which capture the arithmetic of signs or of absolute values. For instance, $\S \coloneqq \R / \R_{> 0} = \{[0], [1], [-1]\}$ is the hyperfield of signs. Multiplication and addition in $\S$ come from the quotient. I.e.\ $[a][b] = [ab]$ and addition of equivalence classes is given by $\sum [a_i] = \{[\sum a_i'] : a_i' \in [a_i]\}$. For example, $[1] + [1] = \{[1]\}$ and $[1] + [-1] = \{[0], [1], [-1]\}$.

Let us look at some example questions which Baker and Lorscheid's framework addresses.

\begin{example}
    Consider the polynomial
    \[
        F(x) = (x + 3)(x - 4)(x - 6) = 2^3 \cdot 3^2 - 2\cdot 3x - 7x^2 + x^3.
    \]
    The sign sequence of the coefficients is $+, -, -, +$ and the signs of the roots are $-, +, +$. Descartes's rule of signs says that after removing any zeroes from the coefficient sequence, the number of positive roots we should expect is equal to the number of adjacent pairs of opposite signs in the coefficient sequence.
    
    For the number of negative roots, we look at $F(-x)$. So if there are no zero coefficients, then the number of negative roots we expect is equal to the number of adjacent pairs of identical signs in the coefficient sequence. Moreover, this bound is sharp so long as all the roots are real.

    Baker and Lorscheid consider this question over the hyperfield of signs. Specifically, if we take the polynomial $f = [1] + [-1]x + [-1]x^2 + [1]x^3$ over the hyperfield of signs, then their multiplicity operator (\ref{def:mult}) gives $\mult^\S_{[1]}(f) = 2$ and $\mult^\S_{[-1]}(f) = 1$.
\end{example}

\begin{example} \label{ex:first-trop}
    Next, consider the same polynomial but with the $2$-adic or $3$-adic valuation. Here we make a scatter plot of $(c, v_p(c))$ for each coefficient $c$ and $p \in \{2, 3\}$ and then take the lower convex hull as shown in in Figure~\ref{fig:intro}.

    \begin{figure}[htbp]
        \label{fig:intro}
        \centering
        \begin{tikzpicture}
            \foreach \x/\y in {0/3,1/1,2/0,3/0}
                \filldraw (\x,\y) circle (0.25mm);
            \draw (0,3) -- (1,1) -- (2,0) -- (3,0);

            \draw (0.5,2) node [left] {slope $-2$};
            \draw (1.5,0.5) node [left] {slope $-1$};
        \end{tikzpicture}
        \hspace{1cm}
        \begin{tikzpicture}
            \foreach \x/\y in {0/2,1/1,2/0,3/0}
                \filldraw (\x,\y) circle (0.25mm);
            \draw (0,2) -- (1,1) -- (2,0) -- (3,0);

            \draw (1,1) node [left] {slope $-1$};
        \end{tikzpicture}
        \caption{Newton polygons of $(x + 3)(x - 4)(x - 6)$ in $\Q_2$ and $\Q_3$ respectively}
    \end{figure}
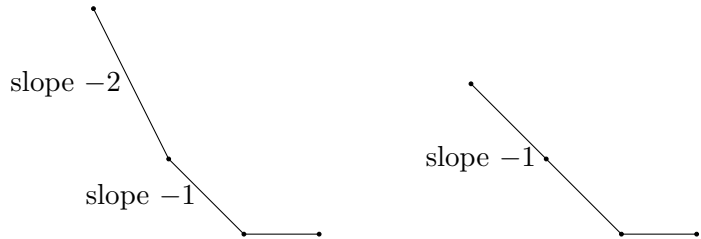

    Newton's Polygon Rule says that the number of roots $r$ with $v_p(r) = k$ is equal to the horizontal width the edge with slope $-k$ (i.e. its length after projecting to the $x$-axis). Thus, for $p = 2$, the valuations of the roots are $0,1,2$ and for $p = 3$ they are $0, 1, 1$.

    Likewise, if we consider the polynomials $3 + 1x + 0x^2 + 0x^3$ and $2 + 1x + 0x^2 + 0x^3$ over the tropical hyperfield ($\T$), Baker and Lorscheid's multiplicity operator, $\mult^\T$, gives the numbers above. For example, $\mult^\T_1 (2 + 1x + 0x^2 + 0x^3) = 2$.
\end{example}

In this paper, we will look at how their multiplicity operator works in the context of a tropical extension. The most common and natural examples of tropical extension are as follows: take $K/G$ to be a hyperfield coming from a quotient, and form a field of series over $K$ (e.g.\ Laurent or Puiseux series). Then quotient by the group of series whose leading coefficient belongs to $G$. For example, if the hyperfield is $\R/\R_{>0}$, then the equivalence classes in this tropical extension are $[0]$ and $\{[\pm t^n] : n \in \Gamma\}$ where the ordered group $\Gamma$ depends on what sort of series we use. Arithmetic in this hyperfield is a combination of the arithmetic of signs and of non-Archimedean absolute values and is described in detail in \cite{G}.

We also address so-called \emph{stringent} hyperfields---a term introduced by Nathan Bowler and Ting Su \cite{BS}. A hyperfield is stringent if $a \boxplus b$ is a singleton if $a \ne -b$. Stringent hyperfields are the next simplest form of hyperfields after of fields. We show that for a polynomial over a stringent hyperfield, the sum of all the multiplicities is bounded by the polynomial's degree (Corollary~\ref{cor:stringent}).

\subsection{Structure of the paper and a rough statement of the results}
In this paper, the primary type of algebra are \emph{idylls}---a generalization of fields which consists of a monoid $B^\bullet$ describing multiplication and a proper ideal $N_{B} \subseteq \N[B^\bullet]$ describing addition.
To describe these algebras, it will be convenient to talk about the larger category of ordered blueprints introduced by Lorscheid \cite{L, LF1, LGB1, LGB2, LSTT} which describe addition through a preorder on $\N[B^\bullet]$.
An Euler diagram of the relationships between these categories is show in Figure~\ref{fig:euler-diagram}.

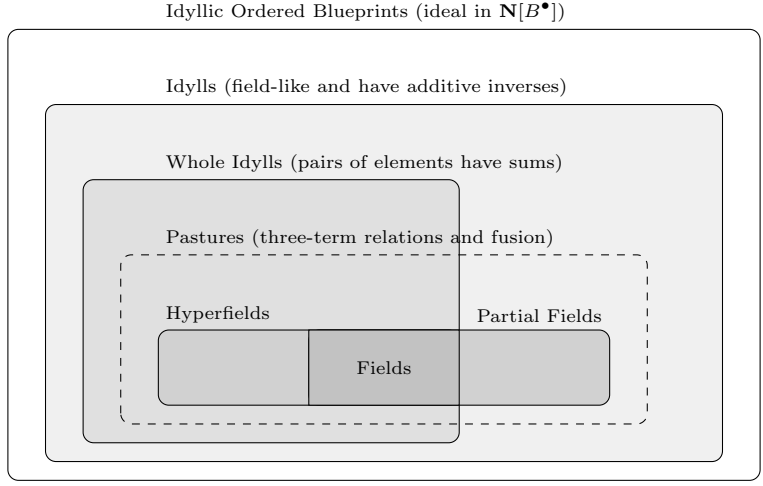
\begin{figure}[htbp]
    \begin{tikzpicture} \tiny
        \draw[rounded corners] (0, 0) rectangle (10, 6);
        \draw (2, 6) node[above right] {Idyllic Ordered Blueprints (ideal in $\N[B^\bullet]$)};
        \draw[rounded corners, fill=black!6] (0.5, 0.25) rectangle (9.5, 5);
        \draw (2, 5) node[above right] {Idylls (field-like and have additive inverses)};
        \draw[rounded corners, fill=black!12] (1, 0.5) rectangle (6, 4);
        \draw (2, 4) node[above right] {Whole Idylls (pairs of elements have sums)};
        \draw[rounded corners, dashed] (1.5, 0.75) rectangle (8.5, 3);
        \draw (2, 3) node[above right] {Pastures (three-term relations and fusion)};
        \draw[rounded corners, fill=black!18] (6, 1) rectangle (2, 2) node[above right] {Hyperfields};
        \draw[rounded corners, fill=black!18] (4, 1) rectangle (8, 2) node[above left] {Partial Fields};
        \draw[fill=black!24] (4, 1) rectangle (6, 2);
        \draw (5, 1.5) node {Fields};
    \end{tikzpicture}
    \caption{Euler diagram of relationships between sub-categories of ordered blueprints}
    \label{fig:euler-diagram}
\end{figure}

We will first state some rough definitions and results here, giving as many definitions as we reasonably can. A more thorough description of ordered blueprints and idylls is given in Section~2. In Section~3, we define polynomial extensions and tropical extensions and discuss Newton polygons and initial forms. In Section~4, we finish describing the theory of polynomials and multiplicities, factorization and multiplicities. In Section~5, we show that tropical extensions for hyperfields (after Bowler and Su \cite{BS}) are a special case of tropical extensions of idylls. In Section~6, we prove Theorems~A, B, C which concern initial forms and lifting. In Section~7, we give some examples and corollaries connecting this work to previous results and prove Theorem~D and Corollary~E concerning the degree bound. In the appendix, we record some division algorithms which have appeared in \cite{BL}, \cite{G} and \cite{AL}.

\begin{definition}
    An ordered blueprint consists of two parts: a multiplicative and additive structure. Multiplication is defined by a monoid $(B^\bullet, 0_B, 1_B, \cdot_B)$ with identity $1_B$ and an absorbing element $0_B$. The additive structure is defined by an additive and multiplicative preorder among formal sums over $B^\bullet$~\cite{LSTT}.
\end{definition}

Within the category of ordered blueprints, are what are named here \emph{idyllic} ordered blueprints for which this preorder is entirely described by an ideal $N_B \coloneqq \{ \sum a_i \in \N[B^\bullet] : 0 \leqslant \sum a_i \}$. The category of idyllic ordered blueprints is morally speaking, the smallest category containing hyperfields, partial fields, and polynomial extensions. We work entirely within this category and often within the sub-subcategory of field-like objects which we call idylls.

\begin{restatable}{definition}{idylldef} \label{def:idyll}
    An \emph{idyll} $B$, is a pair $(B^\bullet, N_B)$, consisting of a monoid $B^\bullet = (B^\bullet, 0_B, 1_B, \cdot_B)$ together with a proper ideal $N_B$ of $\N[B^\bullet]$, and which is ``field-like'' in the sense that:
    \begin{itemize}
        \item $0_B \neq 1_B$,
        \item $B^\times \coloneqq B^\bullet \setminus \{0_B\}$ is a group,
        \item there exists a unique $\epsilon_B \in B^\bullet$ such that $\epsilon_B^2 = 1$ and $1 + \epsilon_B \in N_B$.
    \end{itemize}    
    $N_B$ is called the \emph{null-ideal} of $B$ and $B^\bullet$ and $B^\times$ are called the \emph{underlying monoid} and \emph{group of units} respectively.
\end{restatable}

A first example of an idyll is the idyll associated to a field.

\begin{example}
    Let $K$ be a field and let $K^\bullet = (K, 0_K, 1_K, \cdot_K)$ be the multiplicative monoid of $K$. Then we can define $N_K$ as the ideal of all formal sums whose image in $K$ is $0$.
\end{example}

Next, we have the idylls associated to the rules of Descartes and Newton.
More examples of idylls will be given in Section~\ref{sec:idylls}.

\begin{example}
    The \emph{idyll of signs} or \emph{sign idyll}, $\S$, has underlying monoid $\S^\bullet = \{0, 1, -1\}$ with the standard multiplication. The null-ideal of $\S$ is the set of all formal sums that include at least one $1$ and at least one $-1$. In other words, a formal sum of signs $\sum s_i$ is in $N_\S$ if and only if there exists real numbers $x_i$ such that $\sign(x_i) = s_i$ and $\sum x_i = 0$ in $\R$.
\end{example}

\begin{example}
    The \emph{tropical idyll}, $\T$, is the idyll whose underlying monoid is $(\R \cup \{\infty\}, \infty, 0, +)$, where $\infty$ is an absorbing element for the monoid. The null-ideal, $N_\T$, is the set of all formal sums where the minimum term (in the usual ordering) appears at least twice in the sum. In other words, a formal sum of valuations $\sum \gamma_i$ is in $N_\T$ if and only if there is a valued field $(K, v)$ containing elements $x_i$ such that $v(x_i) = \gamma_i$ and $\sum x_i = 0$ in $K$.
\end{example}

We now introduce the concept of a \emph{tropical extension}. The classical analogue of this is to take a field $K$ and form the field of Laurent series or Puiseux series in $t$ over $K$. This gives us a $t$-adic valuation where the residue field is $K$. Likewise, we are here forming a larger idyll with a valuation and whose ``residue idyll'' is the idyll we start with.
We leave some categorical constructions to Section~\ref{sec:cat-constructions}.

\begin{restatable}{definition}{tropextdef} \label{def:tropext}
    If $B$ is an idyll with multiplicative group $B^\times$, then a \emph{tropical extension} of an ordered Abelian group $\Gamma$ by $B$ is an idyll $C$ with some additional properties. First, there are morphisms $B \xrightarrow{\iota} C \xrightarrow{v} \Gamma$ which induce a short exact sequence of groups:
    \[
        1 \to B^\times \xrightarrow{\iota^\bullet} C^\times \xrightarrow{v^\bullet} \Gamma \to 1.
    \]
    Second, the exactness of the sequence of groups must extend to the ordered blueprints, i.e.\ $\im(\iota) = \eq(v, 1)$.
    Lastly, we require that $N_C$ has the property that $\sum c_i \in N_C$ if and only if $\sum_I c_i \in N_C$ where $I = \{i : v^\bullet(c_i) \text{ is minimal}\}$.

    With a slight abuse of notation, we will write $C \in \Ext^1(\Gamma, B)$ to mean that $C$ is a tropical extension of $\Gamma$ by $B$.
\end{restatable}

\begin{remark}
    Tropical extensions appear in the work of Akian, Gaubert, and Guterman for semirings with a symmetry (negation)~\cite{AGG} as well as in the work of Rowan for the more general setting of ``semiring systems'' \cite{R}.

    For (skew) hyperfields, tropical extensions appear in the work of Bowler and Su as a semidirect product~\cite{BS}. Some examples of this are as follows.
\end{remark}

\begin{example} \label{ex:tropext-firstexamples}
    The most basic example of a tropical extension is the tropical idyll itself, which fits into an exact sequence
    \[
        1 \to \K^\times \to \T^\times \xrightarrow{\sim} \R \to 1. \qedhere
    \]
\end{example}

\begin{example}
    More generally, let $\T_m = (\R^m, \lex)^\idyll$ be the rank-$m$ tropical idyll, which is defined the same way as $\T$ but using $(\R^m, \lex)$ in place of $(\R, \le)$. For all $m, n$, we have a tropical extension
    \[
        1 \to \T_m^\times \to \T_{m + n}^\times \to \R^n \to 1. \qedhere
    \]
\end{example}

\begin{example}
    The tropical real idyll, $\TR$, is the extension
    \[
        1 \to \S^\times \to \TR^\times \to \R \to 1.
    \]
    Here $\TR^\bullet = \{\pm t^\gamma : \gamma \in \R\} \cup \{0\}$ with the natural multiplication. The null-ideal, $N_{\TR}$, is the set of all formal sums $\sum s_i t^{\gamma_i}$ such that if $I = \{i : \gamma_i \text{ is minimal}\}$ then $\sum_I s_i \in N_{\S}$. I.e.\ among the coefficients $\{s_i : i \in I\}$, there is at least one $+1$ and at least one $-1$.
    The tropical real idyll is described further in \cite{G}.
\end{example}

Similar to tropical extensions, we can define polynomial extensions over idylls and define a recursive multiplicity operator for roots of these polynomials.

We have the following definition of a multiplicity operator for idylls, which appears as Definition 1.5 of \cite{BL} for polynomials over hyperfields.
\begin{restatable}{definition}{defmult} \label{def:mult}
    Let $B$ be an idyll, let $f \in B[x]$ be a polynomial and let $a \in B^\bullet$. The \emph{multiplicity of $f$ at $a$} is
    \[
        \mult^B_a(f) = 1 + \max \mult^B_a(g)
    \]
    where the maximum is taken over all factorizations of $f$ into $(x - a)g$, or $\mult^B_a(f) = 0$ if there are no such factorizations.
\end{restatable}

\begin{definition}
    We say that $f$ factors into $(x - a)g$ if $f - (x - a)g$ belongs to the null ideal of $B[x]$. This is equivalent to saying that the degree $d$ terms of $f - (x - a)g$ belong to $x^d N_B$ for all $d$.
\end{definition}

We will define a generalization of leading coefficients and initial forms for tropical extensions. Specifically, we generalize two operators from the classical setting.
First, if $c = \sum a_i t^i \in \C((t))^\times$ is a nonzero Laurent series, then $\lc^\bullet(c) = a_{i_0} \in \C$ where $i_0 = \min\{i : a_i \neq 0\}$. Second, if $f = \sum c_i x^i \in \C((t))[x]$ and $w \in \Z$, then $\In_w(f) = \sum_I \lc^\bullet(c_i) x^i \in \C[x]$ where $I = \{i : v_t(c_i) + iw \text{ is minimal}\}$ and $v_t : \C((t)) \to \Z \cup \{\infty\}$ is the $t$-adic valuation.

For the main theorems, we need one more axiom. We define a \emph{whole} idyll to be an idyll for which every pair of elements $a, b \in B^\bullet$ has at least one `sum' $c \in B^\bullet$ for which $a + b - c \in N_B$. The class of whole idylls includes fields and hyperfields but excludes partial-fields which are not themselves fields. Additionally, let us be clear that a ``polynomial'' in this paper is not allowed to have multiple terms with the same degree, so $x + x^2 + x^5$ is a polynomial but $x + x + x$ is not.

With this in mind, the main theorem for split extensions (having a splitting $\Gamma \to C^\times, \gamma \mapsto t^\gamma$) is the following.
\begin{maintheorem} \label{thm:split}
    Let $B$ be a whole idyll and let $C = B[\Gamma]$ be a split tropical extension of $\Gamma$ by $B$. Then for every polynomial $f \in C[x]$ and $a \in C^\bullet$ with valuation $\gamma$,
    \[
        \mult^C_a(f) = \mult^B_{\lc^\bullet(a)}(\In_{\gamma}(f)).
    \]
\end{maintheorem}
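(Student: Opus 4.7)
The plan is to prove the equality by establishing two inequalities through parallel inductions, using the null axiom of tropical extensions (Definition~\ref{def:tropext}) to relate factorizations in $C$ with factorizations in $B$. Write $\bar a = \lc^\bullet(a)$ and $\bar f = \In_\gamma(f)$ throughout.

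For the upper bound $\mult^C_a(f) \le \mult^B_{\bar a}(\bar f)$, I induct on $\mult^C_a(f)$. If it is zero, the inequality is trivial. Otherwise, pick a factorization $f = (x - a)g$ witnessing $\mult^C_a(f) = 1 + \mult^C_a(g)$. The key observation is that the multiplier $(x - a)$ is ``isobaric'' of weight $\gamma$: weighting $x$ by $\gamma$, both terms of $(x - a)$ have valuation $\gamma$, so no cancellation happens when forming the minimum-valuation terms of the product. Consequently, the initial form of $(x - a)g$ equals $(x - \bar a) \In_\gamma(g)$ on the nose. Since $f - (x - a)g$ has every coefficient in $N_C$, the null axiom of the tropical extension forces the minimal-valuation subsum of each coefficient to be null in $B$, so $\bar f - (x - \bar a) \In_\gamma(g) \in N_{B[x]}$. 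Combined with the inductive bound $\mult^B_{\bar a}(\In_\gamma(g)) \ge \mult^C_a(g)$, this yields $\mult^B_{\bar a}(\bar f) \ge 1 + \mult^C_a(g) = \mult^C_a(f)$.

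For the lower bound $\mult^C_a(f) \ge \mult^B_{\bar a}(\bar f)$, I induct on $\mult^B_{\bar a}(\bar f)$. Given a factorization $\bar f = (x - \bar a) \bar g$ in $B[x]$, the task is to produce $g \in C[x]$ with $f = (x - a)g$ in $C[x]$ and $\In_\gamma(g) = \bar g$. Using the splitting $\gamma \mapsto t^\gamma$, first lift $\bar g$ coefficientwise to $\tilde g \in C[x]$. Then apply polynomial long division of $f$ by $(x - a)$, as recorded in the appendix following \cite{BL,G,AL}: since $B$ is whole and $C = B[\Gamma]$ inherits wholeness from the tropical extension (pairs of monomials admit sums by reducing to a sum in $B$ of their leading coefficients at common valuation), the division proceeds to produce $g, r \in C[x]$ with $\deg r < 1$ and $f = (x - a)g + r$ modulo $N_{C[x]}$. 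Applying the upper bound direction already proven to this identity shows $\In_\gamma(r) = \bar f - (x - \bar a)\In_\gamma(g)$ modulo $N_{B[x]}$; choosing $g$ so that $\In_\gamma(g) = \bar g$ forces $\In_\gamma(r) \in N_{B[x]}$, and the null axiom then upgrades this to $r \in N_{C[x]}$, i.e.\ the remainder vanishes. Induction then gives $\mult^C_a(f) \ge 1 + \mult^C_a(g) \ge \mult^B_{\bar a}(\bar f)$.

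The main obstacle is the lifting step: one must ensure not merely that a division with small-degree remainder exists, but that the remainder is genuinely null in $C$. This is where the null axiom of Definition~\ref{def:tropext} does the essential work, bootstrapping nullness detected on initial forms (over $B$) up to nullness of the full object (over $C$). The hypothesis that $B$ is whole is needed precisely to guarantee that the division algorithm can be executed step-by-step in $C[x]$, since each successive coefficient of $g$ requires choosing a genuine sum in $C^\bullet$ rather than a merely multi-valued one.
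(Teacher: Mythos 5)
Your first inequality ($\le$) is essentially sound and close in spirit to the paper's argument, which routes it through the evaluation morphism $\ev_0 : \mathcal{O}_C \to B$ of Lemma~\ref{lem:ev} together with Proposition~\ref{prop:mult-morph} and Lemma~\ref{lem:monomial-trans-mult}. The one point you gloss over is why the quotient $g$ in a factorization $f \preccurlyeq (x-a)g$ cannot have coefficients of strictly smaller weighted valuation than $f$, i.e.\ why the truncation level of $(x-a)g$ agrees with that of $f$ so that the minimal-valuation subsums really assemble into $\In_\gamma(f) \preccurlyeq (x-\lc^\bullet(a))\In_\gamma(g)$; this needs a short argument (look at the largest index of $g$ achieving the minimum and use that $N_B$ is proper), but it is repairable.

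The genuine gap is in the lower bound, which is the real content of the theorem (Theorem~\ref{thm:lifting}). Your plan --- divide $f$ by $(x-a)$ in $C[x]$ to get $f \preccurlyeq (x-a)g + r$ with $r$ a constant, then argue that $\In_\gamma(r) \in N_{B[x]}$ and that the null axiom ``upgrades'' this to $r \in N_{C[x]}$ --- does not work. Since null-ideals are proper, a single nonzero element is never null: a nonzero remainder $r$ is not in $N_C$, and its initial form is a single nonzero element of $B$, never in $N_B$, so there is nothing for the null axiom to upgrade; that axiom relates a sum to its minimal-valuation subsum, it cannot convert vanishing of an initial form into vanishing of the element. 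Moreover, you cannot isolate $\In_\gamma(r)$ as ``$\bar f - (x-\bar a)\In_\gamma(g)$ modulo $N_{B[x]}$'': relations in an idyll are one-sided statements $0 \leqslant \cdots$ and cannot be subtracted to solve for a term. The substantive difficulty your argument hides is that division in $C[x]$ is multivalued: a greedy division determines the coefficients $d_i$ one at a time from one end and dumps the discrepancy into $r$, and nothing guarantees that the choices can be made so that simultaneously $r$ vanishes and $\In_\gamma(g)$ equals the prescribed $\bar g$ --- arranging this is exactly what has to be proved. The paper does it by an explicit construction: after normalizing ($a = 1$, minimal valuation $0$, and $\Gamma = \R$ via Lemma~\ref{lem:assume-gamma-is-R}), it splits the indices into the left, middle and right regions of the Newton polygon, keeps the given factorization of the initial form untouched on the middle (where the minimal-valuation coefficients live), and defines the remaining coefficients of $\tilde g$ by two different ``staircase'' recursions, anchored at $d_0 = -c_0$ on the left and at the function $j(i)$ on the right, using wholeness to choose the needed sums and the tropical-extension axiom to verify each coefficient relation on its minimal-valuation part. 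Some argument of this kind, showing the division choices can be made to close up with zero remainder while preserving the initial form, is what your proposal is missing.
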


With some slight modification to the ideas of initial forms, we extend this result to the non-split case as follows.
\begin{restatable}{maintheorem}{mainthm} \label{thm:main}
    Let $B$ be a whole idyll and let $C \in \Ext^1(\Gamma, B)$ be a tropical extension of $\Gamma$ by $B$. Let $f \in C[x]$ be a polynomial and let $a \in C^\bullet$ be a root of $f$. Then
    \[
        \mult^C_a(f) = \mult^{B}_{\lc^\bullet(a)} (\In_a(f)).
    \]
\end{restatable}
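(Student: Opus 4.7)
The plan is to prove both inequalities by induction, reducing everything to a lifting lemma for single factorizations. Write $\bar a = \lc^\bullet(a)$, $\gamma = v^\bullet(a)$, and $\bar f = \In_a(f) \in B[x]$.

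For the inequality $\mult^C_a(f) \le \mult^B_{\bar a}(\bar f)$, I would first establish that every factorization $f = (x - a) g$ in $C[x]$ induces a factorization $\bar f = (x - \bar a)\, \In_a(g)$ in $B[x]$. This is a degree-by-degree verification: the coefficient of $x^d$ in $f$ arises from the sum $-a\, g_{d-1} + g_d$ in $C^\bullet$, and those summands whose valuation equals the minimal slope picked out by $\In_a$ survive intact to $\bar f$ after applying $\lc^\bullet$. The axiom on $N_C$ in Definition~\ref{def:tropext} then supplies the corresponding null relation in $N_B$. Iterating and inducting on $\mult^C_a(f)$ gives the desired inequality.

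The reverse inequality rests on a lifting lemma: if $\bar f = (x - \bar a) h$ in $B[x]$, then there exists $g \in C[x]$ with $f = (x - a) g$ in $C[x]$ and $\In_a(g) = h$. Granting this, induction on $\mult^B_{\bar a}(\bar f)$ completes the proof. To construct $g$, I would run the synthetic division algorithm from the appendix, dividing $f$ by $(x - a)$ in $C[x]$ to obtain a quotient $g$ together with a remainder $r \in C^\bullet$; wholeness of $B$ (passed up the extension) makes each intermediate coefficient well-defined as an element rather than a multivalued sum. The hypothesis that $a$ is a root of $f$ forces $r \in N_C$, and since $r$ is a single element this means $r = 0$. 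Matching the recursion defining $g$ against the synthetic division of $\bar f$ by $(x - \bar a)$ in $B[x]$ then yields $\In_a(g) = h$.

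The main obstacle is verifying this last compatibility in the non-split case. If we had a group-theoretic splitting $\Gamma \to C^\times$, we could identify the slope-$\gamma$ part of each coefficient with an element of $B$ and directly invoke Theorem~\ref{thm:split}. Without such a splitting I would instead argue slope by slope, using the exact sequence and the identification $\im(\iota) = \eq(v, 1)$ to rescale any slope-$\gamma$ element of $C^\times$ multiplicatively into $B^\bullet$ and perform the addition there. The axiom that $\sum c_i \in N_C$ if and only if $\sum_I c_i \in N_C$, for $I$ the set of minimal-valuation indices, guarantees that the slope-wise computation genuinely captures $\In_a$ of the quotient. Checking that the synthetic-division recursion on $C[x]$ is compatible with the one on $B[x]$ under these identifications is the technical heart of the argument, and it is where the full strength of Definition~\ref{def:tropext} is used.
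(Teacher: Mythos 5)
Your overall architecture matches the paper's: the easy inequality $\mult^C_a(f) \le \mult^B_{\lc^\bullet(a)}(\In_a f)$ plus a lifting lemma for single factorizations, iterated. Your route to the first inequality (showing a factorization $f \preccurlyeq (x-a)g$ in $C[x]$ descends to a factorization of $\In_a f$ by $(x - \lc^\bullet(a))$ in $B[x]$) is a legitimate alternative to the paper's argument, which instead normalizes by $x \mapsto ax$ and a scalar $c^{-1}$ with $c \in B^{\gamma_0}$ and then applies the morphism $\ev_0 : \OC \to B$ of Lemma~\ref{lem:ev} together with Proposition~\ref{prop:mult-morph}; your version needs the small extra check that the minimal weighted valuation of $(x-a)g$ coincides with that of $f$ (no total cancellation on the minimal-slope edge), which follows from properness of $N_C$, so that part is fine.

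The genuine gap is in your proof of the lifting lemma, which is the heart of the theorem (the paper's Theorem~\ref{thm:lifting}). There is no generic ``synthetic division with remainder'' over a whole idyll of the kind you invoke: wholeness gives existence but not uniqueness of each intermediate coefficient, so division is a tree of choices, and the hypothesis that $a$ is a root does not force the remainder along an arbitrary choice path to vanish --- it only guarantees that \emph{some} choice path realizes a factorization. Concretely, over $\S$ the polynomial $x^2 - x - 1$ has $1$ as a root, yet dividing by $x-1$ one may legitimately choose the quotient's constant term in $-1 \boxplus 1$ so that the final ``remainder'' is $1 \ne 0$; moreover no single nonzero element ever lies in $N_C$, so the inference ``$r \in N_C$, hence $r = 0$'' is vacuous rather than a proof that $r=0$. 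More importantly, even a choice path with zero remainder only produces \emph{some} quotient $g$ with $f \preccurlyeq (x-a)g$; the lifting statement requires $\In_a(g)$ to equal the \emph{given} quotient $h$ of $\In_a f$ (which is generally one of many), and the division recursion only constrains $g$ on the minimal-slope edge. Off that edge one must construct coefficients of strictly larger weighted valuation and verify the null relations at the transition indices; this is exactly where the paper, after reducing to $\Gamma = \R$ (Lemma~\ref{lem:assume-gamma-is-R}) so that a Newton polygon is available, splits the index range into left/middle/right and uses two staircase constructions ($d_i = -c_i$ at descents on the left, $d_{i-1} = c_i$ along the right staircase, with wholeness used only to fill in the remaining coefficients), checking the relations case by case. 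The appendix you cite contains only bespoke factorization rules for $\K$, $\S$ and $\T$ that encode these staircases; it does not supply a general division algorithm, so the ``technical heart'' you defer is precisely the missing content of the proof.
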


In proving this theorem, we will show the following result. Notation is the same as in the previous theorem.
\begin{restatable}{maintheorem}{liftingthm} \label{thm:lifting}
    Any factorization of $\In_a f$ into $(x - 1)g$ can be lifted to a factorization of $f$ into $(x - a)\tilde{g}$ such that $\In_a \tilde{g} = g$.
\end{restatable}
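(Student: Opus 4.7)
The plan is to construct $\tilde g$ by top-down synthetic division, using wholeness of $C$ (which $C$ inherits from $B$) to realize each coefficient, and to make the choices so that taking initial forms recovers $g$. Write $f = \sum_{i=0}^n c_i x^i$ and seek $\tilde g = \sum_{j=0}^{n-1} d_j x^j \in C[x]$ satisfying $d_{i-1} - a d_i - c_i \in N_C$ for every $0 \le i \le n$, with the convention $d_{-1} = d_n = 0$. Setting $d_{n-1} := c_n$ and, for $i = n-1, \dots, 1$, choosing $d_{i-1}$ with $d_{i-1} - c_i - a d_i \in N_C$ (possible by wholeness) handles the conditions for $i \ge 1$. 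The remaining $i = 0$ condition, $c_0 + a d_0 \in N_C$, is equivalent by a telescoping computation to $f(a) \in N_C$: indeed, $\sum_{i=1}^n (d_{i-1} - a d_i - c_i)\, a^i = a d_0 + c_0 - f(a)$ in $\Z[C^\bullet]$, and the left-hand side lies in $N_C$ since each $d_{i-1} - a d_i - c_i$ does. And $f(a) \in N_C$ follows from evaluating the hypothesized identity $\In_a f = (x-1)g$ at $x = 1$ and applying the tropical-extension axiom that nullity is detected by the minimum-valuation terms.

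The real work is making the choices of $d_{i-1}$ so that $\In_a \tilde g = g$. The crucial observation is that the tropical-extension axiom pins down the valuation of any valid $d_{i-1}$: when $v^\bullet(c_i) \ne v^\bullet(a) + v^\bullet(d_i)$, both $v^\bullet(d_{i-1})$ and the leading coefficient of $d_{i-1}$ in $B$ are forced; when the two valuations coincide, wholeness of $B$ provides a set of admissible leading coefficients for $d_{i-1}$, and we use this freedom to match the corresponding coefficient of $g$.

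To formalize this, proceed by downward induction on $i$, carrying as hypothesis a matching between the normalized leading coefficient of $d_{i-1}$ and $b_{i-1}$, together with a matching between the valuation pattern of $\tilde g$ and the support of $g$. The inductive step compares the $C$-side recursion $d_{i-1} \in c_i + a d_i$ against the $B$-side recursion $b_{i-1} - b_i \equiv (\In_a f)_i \pmod{N_B}$ extracted from $(x-1)g = \In_a f$; these are images of one another under the normalized leading-coefficient map at indices that are active, i.e.\ where $v^\bullet(c_i) + i\, v^\bullet(a)$ is minimal. The main obstacle will be matching the zero pattern of $g$ against the positions where the $C$-side recursion produces strictly greater valuations — that is, showing that whenever $v^\bullet(d_{i-1}) + i\, v^\bullet(a)$ exceeds the minimum, the coefficient $b_{i-1}$ of $g$ is zero. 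This is precisely what the factorization $(x-1)g = \In_a f$ encodes, but confirming it will require unwinding the definition of $\In_a$ in a tropical extension and tracking how the support of $g$ interleaves with the active indices.
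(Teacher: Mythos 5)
Your proposal and the paper's proof take genuinely different routes, and yours has a gap that the paper's route is structured precisely to avoid. After normalizing so that $a = 1$ and the minimum coefficient valuation is $0$, the paper works \emph{middle-out}: it partitions the indices $\{0, \dots, n\}$ into left, middle, and right pieces according to where the Newton polygon touches valuation $0$, uses the hypothesis factorization of $\In_1 f$ verbatim on the middle piece, and then \emph{defines} new coefficients for the left and right pieces, starting from $d_0 := -c_0$ on the left and extending outward via wholeness along the ``staircase'' of the Newton polygon. The remainder condition $c_0 \preccurlyeq -d_0$ therefore holds by construction and never needs to be deduced. Your proposal instead performs top-down synthetic division from $d_{n-1} := c_n$ downward, which means $d_0$ is whatever the recursion produces, and the $i = 0$ condition $c_0 + a d_0 \in N_C$ must be \emph{proved} at the end.

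That final step is where the gap is. You assert that $c_0 + a d_0 \in N_C$ is ``equivalent by a telescoping computation to $f(a) \in N_C$,'' computing in $\Z[C^\bullet]$. But $N_C$ lives in $\N[C^\bullet]$, where the telescoping does not cancel: summing $(d_{i-1} - a d_i - c_i)a^i \in N_C$ over $i=1,\dots,n$ produces $a d_0 - \sum_{i\ge 1} c_i a^i$ \emph{together with} the formal pairs $d_i a^{i+1} + (-d_i a^{i+1})$, which do not disappear from the formal sum. Removing such pairs, and likewise passing from $f(a) \in N_C$ to $c_0 + ad_0 \in N_C$, requires the \emph{fusion} rule (as in the paper's Proposition~\ref{prop:pasture-equivalence}), which holds for pastures and hyperfields but not for arbitrary whole idylls --- the generality in which the theorem is stated. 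The same objection applies to your proposed derivation of $f(a) \in N_C$ from the hypothesis $\In_a f \preccurlyeq (x-1)g$. Separately, you flag but do not carry out the verification that $\In_a\tilde g = g$; in the paper's argument this is the main technical content, done after reducing to $\Gamma = \R$ via Lemma~\ref{lem:assume-gamma-is-R} so that a rank-one Newton polygon and its staircase structure are available to force the newly chosen $d_i$ to have strictly positive valuation in the left and right pieces. Your top-down order makes this harder, since the right tail choices already constrain $d_i$ before the middle (active) block is reached.
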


\begin{example}
    If $f$ is a tropical polynomial, then its multiplicity at $w \in \T^\times$ is the same as the multiplicity of the initial form $\In_w f$ and we will see in Example~\ref{ex:krasner-factorization} that this is the horizontal length of the edge in the Newton polygon of $f$ with slope $-w$, as described in Example~\ref{ex:first-trop}.
\end{example}
This example shows how one of Baker and Lorscheid's theorems \cite[Theorem~D]{BL} is a special case of ours and this will also provide an alternative proof of their theorem which we will see in Section~\ref{sec:connections}.

\begin{example}
    The ordinary generating function of the Catalan numbers satisfies the equation $f(C) = 1 - C + xC^2 = 0$. This is a polynomial in $C$ with coefficients in $\R[x] \subset \R((x))$. The two initial forms of $f$ are $\In_0 f = 1 - C$ and $\In_{-1} f = -C + C^2 = -C(1 - C)$.

    The initial form $\In_0 f$ has one positive root and therefore Theorem~\ref{thm:split} tells us to expect one positive root with valuation $0$. Likewise, $\In_{-1} f = -C + C^2$ has one positive root and therefore we should also expect one positive root with valuation $-1$. This all agrees with the explicit solutions we can compute:
    \[
        C_1 = 1 + x + 2x^2 + 5x^3 + \cdots, C_2 = \frac1x - 1 - x - 2x^2 - \cdots. \qedhere
    \]
\end{example}

In Section~\ref{sec:connections}, we show that tropical extension preserves the property of having $\sum_{b \in B} \mult^B_b f$ be bounded by $\deg f$ for all polynomials $f \in B[x]$. This gives some partial understanding to a question asked by Baker and Lorscheid about which hyperfields have this property.

\begin{definition} \label{def:degreebound}
    We say that a whole idyll $B$ \emph{satisfies the degree bound} if for every polynomial $f \in B[x]$,
    \[
        \sum_{b \in B^\bullet} \mult^B_b f \leq \deg f.    
    \]
\end{definition}

\begin{restatable}{maintheorem}{degreeboundthm} \label{thm:degreebound}
    If $B$ satisfies the degree bound and $C \in \Ext^1(\Gamma, B)$ then $C$ satisfies the degree bound.
\end{restatable}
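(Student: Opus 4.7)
The plan is to slice the roots of $f \in C[x]$ by their valuation in $\Gamma$, use Theorem~\ref{thm:main} to convert each slice to $B$-multiplicities of a single initial form in $B[x]$, apply the degree bound for $B$, and assemble the pieces via the Newton polygon.

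Write $f = \sum_{i=i_0}^d c_i x^i$ with $c_{i_0}, c_d \in C^\times$, $d = \deg f$, and $i_0 = \mult^C_0(f)$. Since the root at $0$ contributes exactly $i_0$, it suffices to bound $\sum_{a \in C^\times} \mult^C_a(f)$ by $d - i_0$. By the defining property of $N_C$ in Definition~\ref{def:tropext}, any nonzero root $a$ of $f$ must have $\gamma := v^\bullet(a)$ satisfying $|I(\gamma)| \geq 2$, where $I(\gamma) := \{i : v^\bullet(c_i) + i\gamma \text{ is minimal}\}$; equivalently $-\gamma$ is the slope of an edge of $\newt(f)$. Let $-\gamma_1, \ldots, -\gamma_s$ be the slopes that occur, with widths $w_j := \max I(\gamma_j) - \min I(\gamma_j)$ satisfying $w_1 + \cdots + w_s = d - i_0$.

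Fix a slope $\gamma$ of width $w$ and choose a reference lift $u \in C^\times$ with $v^\bullet(u) = \gamma$. The $C$-algebra automorphism $x \mapsto uy$ carries $f$ to $\tilde f(y) := f(uy) \in C[y]$, preserves the null-ideal, and bijects roots $a$ of $f$ with $v^\bullet(a) = \gamma$ onto roots $u^{-1}a$ of $\tilde f$ of valuation $0$, preserving multiplicities. By exactness, these roots are precisely $\iota(b)$ for finitely many $b \in B^\times$. Applying Theorem~\ref{thm:main} at each $\iota(b)$ (where $\lc^\bullet$ restricts to the inverse of $\iota$) gives $\mult^C_{\iota(b)}(\tilde f) = \mult^B_b(\In_0 \tilde f)$. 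Since $\In_0 \tilde f \in B[x]$ is supported on $I(\gamma)$, it satisfies $\deg \In_0 \tilde f - \mult^B_0 \In_0 \tilde f = w$, so the degree bound for $B$ applied to $\In_0 \tilde f$ yields
\[
    \sum_{v^\bullet(a) = \gamma} \mult^C_a(f) \;=\; \sum_{b \in B^\times} \mult^B_b(\In_0 \tilde f) \;\leq\; \deg \In_0 \tilde f - \mult^B_0 \In_0 \tilde f \;=\; w.
\]
Summing over $j = 1, \ldots, s$ and adding back $i_0$ yields $\sum_{c \in C^\bullet} \mult^C_c(f) \leq i_0 + (d - i_0) = d$.

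The main obstacle will be the verification that the substitution $x \mapsto uy$ genuinely descends to a multiplicity-preserving bijection of roots in the non-split setting, and that $\In_0 \tilde f$ agrees with the initial form that Theorem~\ref{thm:main} ascribes to roots at valuation $\gamma$. Both assertions should follow formally from the fact that $u \in C^\times$ and that null-ideals are preserved by substitution by units; in effect, the substitution trivializes the $B^\times$-torsor structure on $v^{-1}(\gamma)$ and reduces every slope class to the valuation-$0$ case, where $\lc^\bullet$ is canonical.
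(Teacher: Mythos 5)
Your proposal is correct and follows essentially the same route as the paper: partition the roots of $f$ by valuation (equivalently by Newton-polygon edge), use Theorem~\ref{thm:main} after a monomial substitution to convert each valuation class into multiplicities of a single initial form over $B$, apply the degree bound in $B$ edge by edge, and sum the widths. The only differences are cosmetic: the paper first reduces to $\Gamma = \R$ (your index-set formulation makes this unnecessary, and your claimed equality $w_1+\cdots+w_s = d-i_0$ should only be an inequality when edge slopes lie outside $\Gamma$, which is all you need), and the torsor normalization you defer to a ``formal'' verification is exactly what the paper carries out via the transformation $c_k^{-1}f(a_kx)$, the morphism $\ev_0$ of Lemma~\ref{lem:ev}, and Definition~\ref{def:nonsplit-in-mult}/Proposition~\ref{prop:equivalent}.
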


Finally, Bowler and Su have a classification of stringent hyperfields \cite[Theorem~4.10]{BS}. A hyperfield is \emph{stringent} if every sum $a \boxplus b$ is a singleton unless $b = -a$. Bowler and Su's classification says that a hyperfield is stringent if and only if it is a tropical extension of a field, of $\K$, or of $\S$. This gives us the following corollary.
\begin{restatable}{maincorollary}{stringentcor} \label{cor:stringent}
    Every stringent hyperfield satisfies the degree bound.
\end{restatable}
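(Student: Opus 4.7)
The plan is to combine the Bowler--Su classification of stringent hyperfields with Theorem~\ref{thm:degreebound}, reducing the corollary to three base cases. By \cite[Theorem~4.10]{BS}, a stringent hyperfield $H$ is either a field, or a tropical extension $C \in \Ext^1(\Gamma, B)$ where $B$ is itself a field, the Krasner hyperfield $\K$, or the sign hyperfield $\S$. Theorem~\ref{thm:degreebound} applies to idylls, and by the results promised in Section~\ref{sec:idylls} (every hyperfield is an idyll, and the tropical-extension construction of \cite{BS} coincides with Definition~\ref{def:tropext}), it suffices to verify the degree bound for the three possible bases $B$.

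The field case is classical: over a field, $\sum_{b} \mult^B_b f \leq \deg f$ is the standard counting of roots with multiplicity, which in our framework amounts to iterating the factorization $f = (x - a)g$ and observing that each step reduces degree by one. For the Krasner hyperfield $\K$, the null-ideal forces $\mult^{\K}_a(f)$ to coincide with the largest $k$ such that $(x - a)^k$ divides $f$ in $\K[x]$, and an induction on $\deg f$ (using that every $a \in \K^\bullet$ with $f(a) = 0$ allows a factorization lowering the degree) gives the bound. For the sign idyll $\S$, the degree bound is exactly Baker--Lorscheid's version of Descartes's rule: $\mult^\S_{[1]}(f) + \mult^\S_{[-1]}(f) \leq \deg f$, already recorded in \cite{BL} and revisited in Section~\ref{sec:connections}.

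With the three base cases in hand, we apply Theorem~\ref{thm:degreebound} directly: whenever $H \in \Ext^1(\Gamma, B)$ with $B \in \{\text{field}, \K, \S\}$ satisfying the degree bound, the extension $H$ also satisfies the degree bound. Since every stringent hyperfield is of this form (or is itself a field, handled in the base case), the corollary follows.

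The main obstacle is essentially bookkeeping rather than a genuine mathematical difficulty: one must make sure that Bowler--Su's notion of tropical extension of a (skew) hyperfield matches Definition~\ref{def:tropext} when both are viewed as idylls, so that Theorem~\ref{thm:degreebound} is legitimately applicable. This compatibility is precisely what Section~\ref{sec:idylls} establishes, so once that identification is in place the corollary reduces to invoking Theorem~\ref{thm:degreebound} on the three base cases. A secondary, minor point is checking the Krasner base case carefully; while it is folklore, spelling out that the degree bound holds for $\K$ in the Baker--Lorscheid multiplicity language (as opposed to merely counting distinct roots) is worth a short verification.
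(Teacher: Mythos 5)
Your proposal is correct and follows essentially the same route as the paper: invoke the Bowler--Su classification of stringent hyperfields as tropical extensions of a field, $\K$, or $\S$, then apply Theorem~\ref{thm:degreebound} once the three base cases are known to satisfy the degree bound. The only difference is cosmetic: you spell out elementary arguments for the base cases, whereas the paper delegates them (via Corollary~\ref{cor:degree-bound}) to Baker--Lorscheid's results.
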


By \cite[Proposition~B]{BL}, a corollary of this degree bound is that if $\vf : K \to C$ is a morphism from a field $K$ to $C$, then
\[
    \mult^C_c f = \sum_{a \in \vf^{-1}(c)} F
\]
for all polynomials $f \in C[x]$. In particular, this is true for every stringent hyperfield (Corollary~\ref{cor:degree-bound}).

\subsection{Relationship to other papers}
There are two papers which have a close relationship with this one. First is the author's previous paper \cite{G} which proves Theorems~\ref{thm:split} and \ref{thm:lifting} but only for the real tropical hyperfield $\TR = \S[\R]$. The current paper was developed in the editing and revision process for that paper and generalizes the previous paper.

Specifically, here we consider tropical extensions of any rank as well as extensions of any (whole) idyll, not just the extension $\S \to \TR$. Theorems~\ref{thm:split}, \ref{thm:main}, \ref{thm:lifting} generalize one of the main theorems of this previous paper~\cite[Theorem~A]{G}. Theorem~\ref{thm:degreebound} and Corollary~\ref{cor:stringent} are entirely new to this paper.
On the other hand, there are a few things covered in the first paper but not in the current one:
\begin{enumerate}
    \item The first paper spends more time discussing properties of $\TR$ and what it means to have a morphism from a field $K$ to $\TR$ (i.e.\  to have a compatible valuation and total order on $K$)~\cite[Section~2.2.1]{G}.
    \item The first paper gives a proof of the multiplicity formula for fields with a morphism to $\TR$ in the language of fields---in particular without using the result for the hyperfield $\TR$~\cite[Section~3]{G}.
    \item There is a weak lifting theorem given a polynomial over $\TR$ to a polynomial over the field of Hahn series $\R[[t^{\R}]]$ having the same number of roots whose leading coefficient is real and positive~\cite[Theorem~5.4]{G}.
\end{enumerate}

The second paper that has close similarities is that of Marianne Akian, Stéphane Gaubert and Hanieh Tavakolipour~\cite{AGT}. They also consider more general tropical extensions than just $\S \to \TR$. In their paper, they work with a type of algebras introduced by Rowan, called \emph{semiring systems} \cite{R}. These bear some similarities to ordered blueprints but the translation is opaque. Akian, Gaubert and Rowan give some comments about the differences and similarities~\cite[Section 5.1]{AGR} but no direct translation has yet been described.
Both frameworks have interest, as well as different connections and potential future development.

Within Rowan's framework, Akian, Gaubert, and Tavakolipour prove a version of Theorems~\ref{thm:split} and \ref{thm:main} using similar techniques (initial forms) \cite[Theorem~5.11]{AGT}.
Their paper also greatly extends the weak lifting theorem \cite[Theorem~5.4]{G} by proving that multiplicities over semiring systems analogous to the idylls $\S[\Gamma]$ can be lifted to any real closed field~\cite[Theorem~7.8]{AGT} (and the roots are real rather than just having a series whose leading coefficient is real).

\subsection{Acknowledgements}
I thank the anonymous reviewer of the previous paper for their comments. I thank Matt Baker for his suggestions and support in finishing this paper. I thank Oliver Lorscheid for correcting some definitions. I thank James Maxwell for some discussion of an early draft of this paper. I thank Marianne Akian, Stéphane Gaubert and Hanieh Tavakolipour for explaining some of the literature on semirings with a symmetry and semiring systems as well as other helpful comments.

I again thank all the people who talked with me about the ideas in the previous paper and/or for suggesting edits: Matt Baker, Marcel Celaya, Tim Duff, Oliver Lorscheid, Yoav Len and Josephine Yu.

\section{Idylls and Ordered Blueprints} \label{sec:idylls}
Several interrelated ring-like and field-like algebraic theories have been used as a framework for $\F_1$ geometry \cite{LF1}, matroid theory \cite{BB, BL2}, and polynomial multiplicities \cite{BL} among other uses. One such field-like algebra is a \emph{hyperfield} which is a field in which the sum of two elements is a nonempty set. Hyperfields have a lot of axioms which largely mirror their classical counterparts and these are described in \cite{BL} as well as the author's previous paper \cite{G}.

In this paper, we work with a generalization of fields and hyperfields called \emph{idylls}, which also have simpler axioms than their hyperfield counterparts. To start, fix a monoid $B^\bullet$ which one can think of as the multiplicative structure of a ring.

\begin{definition}
    For us, monoids have two distinguished elements: $0$ and $1$, and the following axioms:
    \begin{itemize}
        \item multiplication is commutative and associative,
        \item $1$ is a unit: $1 \cdot x = x$ for all $x$,
        \item $0$ is absorbing: $0 \cdot x = 0$ for all $x$.
    \end{itemize}
    These are also called \emph{pointed monoids} or \emph{monoids-with-zero} in the literature.
\end{definition}

Second, we form the free semiring, $\N[B^\bullet]$, which is a quotient of the semiring of finitely supported formal sums by the ideal $\langle 0 \rangle$. I.e.
\[
    \N[B^\bullet] \coloneqq \frac{\left\{\sum_{i = 1}^n x_i : x_i \in B^\bullet\right\}}{\{0, 0 + 0, 0 + 0 + 0, \dots\}}.
\]

\idylldef*

\begin{remark} \label{rem:tract-v-idyll}
    For some purposes, it is enough to just assume that $N_B$ is closed under multiplication by elements in $B^\bullet$ rather than requiring it to be an ideal. Such algebras are called \emph{tracts} and are used, for instance, in the work of Baker and Bowler on matroids \cite{BB}.
\end{remark}

\subsection{Ordered Blueprints}
One reason to prefer the stronger notion of idylls is because these are special cases of Oliver Lorscheid's theory of ordered blueprints \cite{L, LF1, LGB1, LGB2, LSTT}. We can think about these null-ideals as a ``1-sided relation'' $0 \leqslant x_1 + \dots + x_n$ where we say which sums should be ``null'' (although this does not mean we are considering the quotient by $N_B$). Ordered blueprints expand this 1-sided relation by allowing any preorder on $N[B^\bullet]$ which is closed under multiplication and addition. The next definition makes this precise.

\begin{definition} \label{def:ob}
    An \emph{ordered blueprint} $B = (B^\bullet, \leqslant)$ consists of an underlying monoid $B^\bullet$ and a preorder or \emph{subaddition} $\leqslant$ on $N[B^\bullet]$ satisfying for all $a, b \in B^\bullet$ and $w, x, y, z \in \N[B^\bullet]$
    \begin{itemize}
        \item $a \leqslant a$ \hfill (reflexive on $B^\bullet$)
        \item $a \leqslant b$ and $b \leqslant a$ implies $a = b$ \hfill (antisymmetric on $B^\bullet$)
        \item $x \leqslant y$ and $y \leqslant z$ implies $x \leqslant z$ \hfill(transitive)
        \item $x \leqslant y$ implies $x + z \leqslant y + z$ \hfill (additive)
        \item $w \leqslant x$ and $y \leqslant z$ implies $wy \le xz$ \hfill (multiplicative)
        \item $0 \leqslant \text{(empty sum)}$ and $\text{(empty sum)} \leqslant 0$
    \end{itemize}
    The notation $x \in B$ means $x \in \N[B^\bullet]$.
\end{definition}

\begin{remark} \label{rem:axiomatic-blueprints}
    The relation $\leqslant$ is only necessarily antisymmetric on $B^\bullet$. If we identify all formal sums $x, y \in \N[B^\bullet]$ such that $x \leqslant y$ and $y \leqslant x$, the quotient is a new semiring which is denoted $B^+$. The subaddition $\leqslant$ descends to a partial order (antisymmetric) on $B^+$ \cite[Section~5.4]{L}.

    An equivalent theory of ordered blueprints can be described in terms of a partial order on a semiring $B^+$ which is generated by $B^\bullet$---meaning $B^+$ is a quotient of $\N[B^\bullet]$. In this paper, most of our generating relations will be of the form $0 \leqslant y$ so our preorders will usually be partial orders as well.
    
    Baker and Lorscheid define idylls as a partial order on $B^+$ and take as an axiom that the quotient map $\N[B^\bullet] \to B^+$ is a bijection \cite[Definition~2.18]{BL2}. Since idylls are the primary class of objects for us, we will work with $\N[B^\bullet]$ directly.
\end{remark}

\begin{definition}
    We say that a preorder is \emph{generated by} a collection $S = \{x_i \leqslant y_i : i \in I\}$ if it is the smallest preorder containing $S$.
\end{definition}

\begin{remark}
    There is a natural embedding of the category of idylls in the category of ordered blueprints by letting $\leqslant$ be the preorder generated by $N_B$. I.e. generated by $0 \leqslant \sum a_i$ for every $\sum a_i \in N_B$. We will use both ideal and preorder notation in what follows.
\end{remark}

We can also generalize idylls to a larger (but still proper) subcategory of ordered blueprints. If idylls are field-like then \emph{idyllic} ordered blueprints are their ring-like cousins. We do not assume that idyllic ordered blueprints have additive inverses for the sole reason that it allows us to call $\F_1$---which we will define shortly---an idyllic blueprint.

\begin{definition}
    An ordered blueprint is \emph{idyllic} if it is generated by relations of the form $0 \leqslant \sum x_i$. The \emph{idyllic part} of an ordered blueprint $B$, is an ordered blueprint $B^\idyll$ obtained by restricting to the relation generated by relations of the form $0 \leqslant \sum x_i$ in $B$.
    We will sometimes drop the word ``ordered'' to be less wordy and simply call these ``idyllic blueprints.''

    If $B$ is idyllic, we will again call $N_B = \{\sum x_i \in \N[B^\bullet] : 0 \leqslant \sum x_i\}$ the \emph{null-ideal} of $B$ and call $\leqslant$ a \emph{subaddition}. This gives us a common language to talk about idylls and idyllic blueprints.
\end{definition}

\begin{remark}
    There are other notions of positivity for ordered blueprints. In the language of a partial order on a semiring $B^+$ (Remark~\ref{rem:axiomatic-blueprints}), an ordered blueprint is \emph{purely positive} if it is generated by relations of the form $0 \leqslant \sum x_i$ \cite[Definition~2.18]{BL2}. In this language, an idyllic ordered blueprint is a purely positive ordered blueprint for which the map $\N[B^\bullet] \to B^+$ is a bijection.
\end{remark}

\begin{remark} \label{rem:two-embeddings}
    There are two ways to embed a ring or a field $R$ into the category of ordered blueprints. For both embeddings, the underlying monoid is $R^\bullet = (R, 0_R, 1_R, \cdot_R)$. First, we can embed $R$ as $R^\oblpr$ where the relation is given by $\sum x_i \leqslant \sum y_i$ if the evaluation of those sums in $R$ are equal. The second embedding is $R^\idyll = (R^\oblpr)^\idyll$, where we restrict to relations of the form $0 \leqslant \sum y_i$.
\end{remark}

In other words, there is one embedding into the category of ordered blueprints and a different embedding into the category of idyllic blueprints.

\begin{example} \label{ex:F1}
    The ordered blueprint $\F_1$ has $\F_1^\bullet = \{0, 1\}$ as its underlying monoid, with the usual multiplication. The relation on $\F_1$ is equality (equivalently it has an empty generating set).
    $\F_1$ is the initial object in the category of idyllic ordered blueprints and its null-ideal is $\{0\}$.
\end{example}

\begin{example} \label{ex:krasner}
    The \emph{Krasner idyll} $\K$ on $\{0, 1\}$ is the idyll with null-ideal $\N[\B^\bullet] \setminus \{1\} = \{1 + 1, 1 + 1 + 1, \dots\}$. The Krasner idyll is the terminal object in the category of idylls.
\end{example}

\begin{definition}
    If $R$ is a ring or field and $G$ is a subgroup of $R^\times$, then there is an idyll on $G^\bullet = G \cup \{0\}$ which we call a \emph{partial field idyll}. The null-ideal of $G$ is the set of formal sums whose image in $R$ is zero.
\end{definition}

These are called ``partial'' fields because the sum of two elements of $G^\bullet$ may or may not be in $G^\bullet$ as well. Therefore addition is only partially defined.

\begin{example} \label{def:F1pm}
    As an example, consider $G = \{1, -1\} = \Z^\times$ then $N_G = \langle 1 + (-1) \rangle$. This is called the regular partial field (idyll) and is denoted either $\F_{1^2}$ or $\F_1^{\pm}$ in the literature.

    More generally, if $G = \mu_n \subset \C^\times$ is the group of $n$-th roots of unity, then $G$ forms an idyllic ordered blueprint called $\F_{1^n}$. This lacks additive inverses if $n$ is odd, but is still field-like.
\end{example}

\begin{definition} \label{def:hyperfield-idyll}
    If $K$ is a field and $G$ is a subgroup of $K^\times$ then there is am idyll on $K / G$ called a \emph{hyperfield idyll}. The null-ideal $N_{K/G}$ is the set of sums of equivalence classes $[a_1] + \dots + [a_n]$ for which there exists representatives $x_i \in [a_i], i = 1,\dots,n$ for which $x_1 + \dots + x_n = 0$ in $K$.
\end{definition}

\begin{remark} \label{ex:sumset}
    Partial fields and hyperfields are also thought about in terms of the sum sets $a \boxplus b \coloneqq \{c : a + b - c \in N_B\}$. For partial fields, every sum is either empty or a singleton. For hyperfields, every sum is non-empty. Fields, therefore, are the intersection of partial fields and hyperfields.
\end{remark}

\begin{remark} \label{rem:hyperfield-not-nec-quots}
    All of the hyperfield (idyll)s named in this paper are quotients of some field by a multiplicative group, however there are hyperfields which are not of this form. Christos Massouros was the first to construct an example of such \cite{Mas}. For the purposes of this paper, it is sufficient to think only about quotient hyperfields.

    For quotients, the sum sets defined in the previous remark can also be defined for an arbitrary number of summands by
    \[\bigboxplus_i [a_i] = \{a_i' : a_i' \in [a_i]\}. \]

    If the hyperfield is not a quotient, then we need to define repeated hyperaddition monadically:
    \begin{itemize}
        \item identify $a$ with $\{a\}$,
        \item flatten sums, so \[ a_0 \boxplus (a_1 \boxplus \dots \boxplus a_n) = \bigcup \{ a_0 \boxplus t : t \in a_1 \boxplus \dots \boxplus a_n \}. \]
    \end{itemize}
    Defining hyperfields as idylls (Definition~\ref{def:hyperfield}) skips having to talk about the monadic laws.

    The monadic laws are closely related to a \emph{fissure rule} (Remark~\ref{rem:fissure-rule}) for pastures, which we will define later on. We will make use of in Proposition~\ref{prop:pasture-equivalence}. We will also reference the monadic laws in Section~\ref{sec:hyperfields} where we define hyperfields as special kinds of pastures.
\end{remark}

\begin{example}
    As an example, the Krasner idyll $\K$ is a quotient $K/K^\times$ for any field $K$ other than $\F_2$. The idyll of signs $\S$ is the quotient $\R / \R_{> 0}$. The tropical idyll is the quotient of a valued field $K$ with value group $\R$ by the group of elements with valuation $0$.

    More generally, if the value group of $K$ is any ordered Abelian group $\Gamma$, then the same quotient $K / v^{-1}(0)$ gives an idyll structure on $\Gamma$ which we will see again in Definition~\ref{def:OAGidyll}. This is \emph{a} tropical idyll but not \emph{the} tropical idyll---a term reserved for $\Gamma = \R$. Instead, we will call these \emph{OAG idylls}.
\end{example}

\begin{example}
    The \emph{idyll of phases} or \emph{phase idyll} $\P$, is the hyperfield idyll on the quotient $\C / \R_{> 0}$.
    A sum of phases $\sum e^{i\theta_k}$ belongs to $N_\P$ if there are magnitudes $a_k \in \R_{> 0}$ for which $\sum a_k e^{i\theta_k} = 0$. Equivalently, we can define the null-ideal using convex hulls as
    \[
        N_{\P} = \left\{ \sum_k e^{i\theta_k} : 0 \in \operatorname{int}\left(\conv_k\left(e^{i\theta_k}\right) \right) \right\}
    \]
    where $\operatorname{int}(\conv_k(e^{i\theta_k}))$ is the interior of the convex hull relative to its dimension. E.g.\ if the convex hull is a line segment, then the interior is a line segment without the endpoints.
\end{example}

\begin{remark}
    As Bowler and Su point out in a footnote \cite[page 674]{BS}, there are actually two phase hyperfields: the quotient $\P = \C / \R_{> 0}$ as defined above, and the one that Viro originally defined \cite{V1}. The difference is in whether you require $0$ be in the interior of $\conv_k(e^{i\theta_k})$ (our definition) or if it is allowed to lie on the boundary (Viro's definition).
\end{remark}

\subsection{Morphisms of Ordered Blueprints}
\begin{definition} \label{def:morphism}
    If $B, C$ are two ordered blueprints, a (homo)morphism $f : B \to C$ consists of a morphism of monoids $f^\bullet : B^\bullet \to C^\bullet$ such that the induced map $f : B \to C$ is order-preserving.
    In particular, for all $x, y, x_i, y_i \in B^\bullet$
    \begin{itemize}
        \item $f^\bullet(xy) = f^\bullet(x) f^\bullet(y)$
        \item $f^\bullet(0_B) = 0_C$
        \item $f^\bullet(1_B) = 1_C$
        \item if $\sum x_i \leqslant \sum y_i$ then $f(\sum x_i) \leqslant f(\sum y_i)$
    \end{itemize}
\end{definition}

\begin{definition}
    A morphism of idylls is a morphism of their corresponding ordered blueprints. I.e.\ it is a morphism of monoids such that $f(N_B) \subseteq N_C$.
\end{definition}

\subsubsection{Valuations}
Classically, a (rank-$1$) valuation on a field $K$ is a map $v : K \to \R \cup \{\infty\}$ such that
\begin{itemize}
    \item $v(0) = \infty$,
    \item $v$ restricts to a group homomorphism $F^\times \to (\R, +)$,
    \item and for every $a, b \in F$, we have $v(a + b) \ge \min\{v(a), v(b)\}$.
\end{itemize}
In our language, a valuation in this sense is simply a morphism from a field $K$ (viewed as an idyll) to the tropical idyll $\T$.

We can also substitute $\R$ with any ordered Abelian group (OAG).

\begin{definition}
    An \emph{ordered Abelian group} (OAG) is an Abelian group $(\Gamma, +)$ with a total order $\le$ for which $a \le b$ implies $a + c \le b + c$ for all $a, b, c$.
    The \emph{rank} of an OAG is $\dim_\R \Gamma \otimes \R$ and the phrase ``higher-rank'' simply means that the rank is at least $2$.
\end{definition}

\begin{example} \label{ex:lex-order}
    On $\R^n$, there is a \emph{lexicographic} or \emph{dictionary} order $\lex$ defined inductively by $(a_1, \dots, a_n) \lex (b_1, \dots, b_n)$ if either $a_1 < b_1$ or $a_1 = b_1$ and $(a_2, \dots, a_n) \lex (b_2, \dots, b_n)$. We will make use of this order in Section~\ref{sec:higher-rank}
\end{example}

\begin{definition} \label{def:OAGidyll}
    The idyll $\Gamma^\idyll$ is the idyll on $\Gamma^\bullet = \Gamma \cup \{\infty\}$ where
    \begin{itemize}
        \item $\infty$ is the absorbing element
        \item $0$ is the unit element (writing things additively)
        \item $\sum a_i \in N_\Gamma$ if and only if the minimum term appears at least twice
    \end{itemize}
    We will call this an \emph{OAG idyll}.
\end{definition}

\begin{definition}
    In our framework, a \emph{valuation} $v$ on an idyllic ordered blueprint $B$, is a morphism $v : B \to \Gamma^\idyll$ for some ordered Abelian group $\Gamma$. The letter $v$ will be reserved for a valuation of some kind and usually for the valuation $C \to \Gamma^\idyll$ which appears in the definition of a tropical extension.
\end{definition}

Now we will check that valuations as we have just defined, agree with the usual notion of a Krull valuation as well as illustrate some properties of valuations.

\begin{proposition}
    If $R$ is a ring and $v : R^\idyll \to \Gamma^\idyll$ is a valuation, then
    \begin{enumerate}[label=(V\arabic*)]
        \item \label{prop:val:monoid} $v^\bullet : R^\bullet \to \Gamma^\bullet$ is a monoid homomorphism,
        \item \label{prop:val:zero} $v^\bullet(0_R) = \infty$,
        \item \label{prop:val:rootsofunity} if $u^n = 1_R$ for some $n \ge 1$ then $v^\bullet(u) = 0$,
        \item \label{prop:val:ultrametric} $v^\bullet(a +_R b) \ge \min\{v^\bullet(a), v^\bullet(b)\}$ for all $a, b \in R$.
        \item \label{prop:val:distinct} if $v^\bullet(a) \neq v^\bullet(b)$ then $v^\bullet(a +_R b) = \min\{v^\bullet(a), v^\bullet(b)\}$.
    \end{enumerate}
    Conversely, a map $v^\bullet : R^\bullet \to \Gamma^\bullet$ with these properties induces a valuation $v : R^\idyll \to \Gamma^\idyll$.
\end{proposition}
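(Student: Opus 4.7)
My overall strategy is to separate the proposition into three parts: the monoid-theoretic properties (V1)--(V3), the valuation inequalities (V4)--(V5) which both follow from a single null-ideal relation, and finally the converse direction, which reduces to showing that the induced map on formal sums preserves null-ideals.

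For the forward direction, I would begin by observing that (V1) and (V2) are immediate from the definition of a morphism of ordered blueprints: such a morphism is built from a monoid homomorphism $v^\bullet$, and monoid homomorphisms preserve $1$ and the absorbing element $0$ (which in $\Gamma^\idyll$ is $\infty$). For (V3), I would apply (V1) to compute $n \cdot v^\bullet(u) = v^\bullet(u^n) = v^\bullet(1_R) = 0$ in $\Gamma$; since an ordered abelian group is torsion-free, this forces $v^\bullet(u) = 0$. The special case I would record explicitly before moving on is $v^\bullet(-1_R) = 0$, which I will reuse immediately.

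For (V4) and (V5), the single fact I would exploit is that the formal sum $a + b + (-(a+b))$ belongs to $N_R$, since these three elements sum to $0$ in $R$. Pushing this relation along $v$ and using (V1) together with $v^\bullet(-1_R) = 0$ gives $v^\bullet(a) + v^\bullet(b) + v^\bullet(a+b) \in N_\Gamma$; by definition of $N_\Gamma$, the minimum of these three values must be attained at least twice. Property (V4) is then the contrapositive of the statement ``$v^\bullet(a+b)$ is a strict minimum,'' and when $v^\bullet(a) < v^\bullet(b)$, the forced repetition of the minimum pairs $v^\bullet(a)$ with $v^\bullet(a+b)$, giving (V5).

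For the converse, I would define $v$ on formal sums term-by-term and check that $\sum a_i \in N_R$ implies $\sum v^\bullet(a_i) \in N_\Gamma$. Arguing by contradiction, if the minimum of $\{v^\bullet(a_i)\}$ were achieved at a unique index $j$, iterating (V4) gives $v^\bullet(\sum_{i \ne j} a_i) \ge \min_{i \ne j} v^\bullet(a_i) > v^\bullet(a_j)$, and (V5) then forces $v^\bullet(\sum_i a_i) = v^\bullet(a_j) \in \Gamma$; but $\sum a_i = 0_R$ combined with (V2) gives $v^\bullet(\sum a_i) = \infty$, a contradiction. The main obstacle, such as it is, is this inductive use of (V4) and (V5) on partial sums, which has to be set up carefully; otherwise the argument is essentially an unwinding of definitions.
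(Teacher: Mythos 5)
Your proposal is correct and follows essentially the same route as the paper: (V1)--(V3) from the morphism axioms, (V4)--(V5) by pushing the null relation $a + b - (a+_R b) \in N_R$ into $N_\Gamma$ and using that the minimum must occur twice, and the converse by noting that a uniquely attained minimum would force $v^\bullet(0_R)$ to be finite. Your explicit appeals to torsion-freeness of $\Gamma$, to $v^\bullet(-1_R)=0$, and to the induction on partial sums merely spell out steps the paper leaves implicit.
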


\begin{proof}
    Properties \ref{prop:val:monoid} and \ref{prop:val:zero} follow by definition and Property \ref{prop:val:rootsofunity} follows from Property \ref{prop:val:monoid}.

    For Property \ref{prop:val:ultrametric}, if $c = a +_R b$ in $R$, then $0 \leqslant_R a + b - c$ in $R^\idyll$. Therefore, $v(a + b - c) = v^\bullet(a) + v^\bullet(b) + v^\bullet(c) \in N_\Gamma$ and this, by definition, means that the minimum of $v^\bullet(a), v^\bullet(b), v^\bullet(c)$ occurs at least twice. It is impossible therefore, to have $v^\bullet(c) < \min\{v^\bullet(a), v^\bullet(b)\}$.

    Property \ref{prop:val:distinct} follows because if the minimum of $v^\bullet(a), v^\bullet(b), v^\bullet(a +_R b)$ needs to occur at least twice and $v^\bullet(a) \neq v^\bullet(b)$, then $v^\bullet(a +_R b)$ must be equal to the minimum of $v^\bullet(a), v^\bullet(b)$.

    Conversely, suppose $v^\bullet$ satisfies these properties and $0 \leqslant \sum x_i$ in $R^\idyll$---meaning $\sum_R x_i = 0_R$ in $R$ and we may assume that at least one of the $x_i$'s are nonzero or else there is nothing to show. Given this, we know that the minimum of the quantities $v^\bullet(x_i)$ occurs at least twice because otherwise $v^\bullet(\sum_R x_i) = \min v^\bullet(x_i)$ by property \ref{prop:val:distinct}. But $v^\bullet(\sum_R x_i) = v^\bullet(0_R) = \infty \neq \min \{v^\bullet(x_i)\}$. So we conclude that the minimum occurs at least twice and hence $0 \leqslant \sum v^\bullet(x_i)$ in $\Gamma^\idyll$.
\end{proof}

\begin{remark}
    A more general definition of valuations exists where the source and target can be any ordered blueprint \cite[Section 3]{LSTT}, \cite[Chapter~6]{L}.
\end{remark}

\subsection{Images, Equalizers and Subblueprints} \label{sec:cat-constructions}
We will now define a few categorical constructions which are useful in our constructions. Particularly for describing tropical extensions.

\begin{definition} \label{def:subblueprint}
    A \emph{subblueprint} $B$ of an ordered blueprint $C$ is a submonoid $B^\cdot \subseteq C^\bullet$ and such that if $\sum x_i \leqslant \sum y_i$ in $B$ then $\sum x_i \leqslant \sum y_i$ in $C$. The subblueprint is \emph{full} if the converse holds: if $\sum x_i$ and $\sum y_i \in B$ then $\sum x_i \leqslant \sum y_i$ in $C$ if and only if $\sum x_i \leqslant \sum y_i$ in $B$.
\end{definition}

\begin{remark}
    A full subblueprint is determined entirely by the submonoid $B^\bullet \subseteq C^\bullet$ and we will call this an \emph{induced} subblueprint.
\end{remark}

\begin{definition}
    If $f : B \to C$ is a morphism of ordered blueprints, its \emph{image} is the subblueprint $\im(f)$ on the monoid $\im(f)^\bullet = f(B^\bullet) \subseteq C^\bullet$ where $\sum f^\bullet(x_i) \leqslant \sum f^\bullet(y_i)$ in $\im(f)$ if and only if $\sum x_i \leqslant \sum y_i$ in $B$.
\end{definition}

\begin{definition}
    Given two maps $f, g : B \to C$, their \emph{equalizer} $\eq(f, g)$ is the induced subblueprint of $B$ on the monoid $\eq(f,g)^\bullet = \{x \in B^\bullet : f(x) = g(x)\}$.
\end{definition}

\begin{definition}
    If $v : C \to \Gamma^\idyll$ is a valuation on an idyll $C$, we define a morphism $1 : C \to \Gamma^\idyll$ by $1^\bullet(x) = 1_{\Gamma^\idyll}$ if $x \neq 0_C$ and $1^\bullet(0_C) = 0_{\Gamma^\idyll}$. This is a morphism because idylls have proper null-ideals, meaning if $\sum x_i \in N_C$ then there are at least two nonzero $x_i$'s and so $1\left( \sum x_i \right) \in N_\Gamma$ since the minimum occurs at least twice.
\end{definition}

We can also describe the morphism $1$ as the composition of the sequence $C \xrightarrow{v} \Gamma^\idyll \to \K \to \Gamma^\idyll$.

\section{Polynomial and Tropical Extensions} \label{sec:extensions}
Let us turn our attention next to generalizing polynomial rings to polynomials over idylls. Remember that additive relations in idylls are handled by an ideal in some free semiring. The terms in those additive relations form a monoid. This suggests the following definition.

\begin{definition} \label{def:poly-ext}
    Let $B$ be an idyll with monoid $B^\bullet$ and null-ideal $N_B \subset \N[B^\bullet]$. The \emph{polynomial extension} of $B$ is an idyllic ordered blueprint, which we call $B[x]$. Its underlying monoid is
    \[
        B[x]^\bullet = \{bx^n : b \in B^\bullet, n \in \N\} / \langle 0x^n \equiv 0 : n \in \N \rangle
    \]
    with multiplication given by $(bx^m)(cx^n) = (bc)x^{m + n}$. The null-ideal of $B[x]$ is the ideal in $\N[B[x]^\bullet]$ which is generated by $N_B$.
\end{definition}

\begin{definition} \label{def:poly}
    When we say a \emph{polynomial}, we mean that which might otherwise be called a \emph{pure polynomial}. A (pure) polynomial is an element of $\N[B[x]^\bullet]$ for which there is at most one term in each degree. E.g.\ $x + x^2 + x^5$ is a polynomial but $x + x + x$ is not.
\end{definition}

\begin{remark}
    These polynomial extensions are free objects in the category of $B$-algebras---where an algebra over $B$ is a morphism $B \to C$. For hyperfields, one can try to define a polynomial ring by extending addition and multiplication:
    \[
        \left(\sum a_i x^i\right) \boxplus \left(\sum b_i x^i \right) = \left\{ \sum c_i x^i : c_i \in a_i \boxplus b_i \right\}
    \]
    and
    \[
        \left(\sum a_i x^i\right) \boxdot \left(\sum b_j x^j \right) =
        \left\{ \sum c_k x^k : c_k \in \bigboxplus_{i + j = k} a_i b_j \right\}.
    \]
    This creates an algebra in which both addition and multiplication are multivalued. Unfortunately, this naïve definition fails to be free and, more egregiously, the multiplication is often not associative either \cite[Appendix~A]{BL}.
\end{remark}

A related construction to polynomial extensions is that of a split tropical extension.

\begin{definition} \label{def:split-extension}
    Let $B$ be an idyll and let $\Gamma$ be an OAG. Form the pointed group
    \[
        B[\Gamma]^\bullet = \{bt^\gamma : b \in B^\bullet, \gamma \in \Gamma\} / \langle 0t^\gamma \equiv 0 : \gamma \in \Gamma \rangle
    \]
    with multiplication given by $(b_1t^{\gamma_1})(b_2t^{\gamma_2}) = (b_1b_2) t^{\gamma_1 \gamma_2}$.

    The null-ideal of $B[\Gamma]$ is the set of all formal sums $\sum a_it^{\gamma_i}$ such that if we let $I = \{i : \gamma_i \text{ is minimum}\}$ then $\sum_I a_i \in N_B$.
\end{definition}

Split tropical extensions come with a natural valuation map $v : B[\Gamma] \to \Gamma^\idyll$ given by $v^\bullet(bt^\gamma) = \gamma$. For split tropical extensions, there is a splitting $\Gamma \to B[\Gamma]^\times$ given by $\gamma \mapsto t^\gamma$.

\begin{remark}
    Going forward, we will often drop the `$t$' from the notation and simply write $b^\gamma$ instead of $bt^\gamma$ and $1^\gamma$ instead of $t^\gamma$. This helps avoid confusing $B[\Gamma]$ with a polynomial extension since $B[\Gamma]$ has some additional relations on it beyond those of just polynomials.
\end{remark}

More generally, a tropical extension is any idyll which fits into an exact sequence with $B$ and $\Gamma$ and with similar rules about the null-ideal as for split extensions.

\tropextdef*

\begin{remark}
    From Section~\ref{sec:cat-constructions}, to say that $\im(\iota) = \eq(v, 1)$ means that $0 \leqslant \sum x_i$ in $B$ if and only if $0 \leqslant \sum \iota^\bullet(x_i)$ in $\eq(v, 1) \subseteq C$. I.e.\ $\im(\iota)$ is a full subblueprint of $C$. Because of this, we can safely make the assumption that $B^\bullet \subseteq C^\bullet$ and $\iota$ is the identity.
\end{remark}

\begin{remark}
    Tropical extensions of idylls are closely related to tropical extensions for semiring with a symmetry \cite{AGG} or for semiring systems \cite{R, AGT}. For hypergroups and (skew) hyperfields, tropical extensions appear as a semidirect product in the work of Bowler and Su~\cite{BS}.
\end{remark}

\begin{remark}
    Tropical extensions, have ``levels'' $B^\gamma = \{c \in C : v^\bullet(c) = \gamma\}$ which are not-necessarily-canonically isomorphic to $B^\times$ and $B^0$ is canonically isomorphic to $B^\times$. The relations on $B^\gamma$ are uniquely determined by the torsor action $B^0 \times B^\gamma \to B^\gamma$. (See also Section~\ref{sec:non-split-case}.)

    Additionally, to say that a relation $\sum a_i \in N_C$ holds if and only if it holds among the minimal valuation terms, means that if we have a sum like $a - a \in N_C$ then $a - a + b \in N_C$ for any element $b$ of larger valuation. In other words, the sum set $a \boxplus (-a)$ from Example~\ref{ex:sumset} contains every element whose valuation is strictly larger than $v^\bullet(a)$.
    
    These properties about levels and sum sets are the basis for how Bowler and Su describe their semidirect product.
    We will give a formal proof of this equivalence in Section~\ref{sec:hyperfields}.
\end{remark}

\begin{remark}
    By Bowler and Su's classification \cite[Theorem~4.17]{BS}, if $B$ is either $\K$ or $\S$ then every tropical extension by $B$ is split.
\end{remark}

\begin{example}
    The tropical idyll $\T = \K[\R]$ is a split tropical extension of $\R$ by $\K$. The only caveat is a slight change of notation: we defined elements of $\T^\times$ as real numbers but we defined elements of $\K[\R]^\times$ as being of the form $1^\gamma$ where $\gamma$ is a real number.

    For instance, the sum $0 + 0 + 1$ in $N_\T$ corresponds to $1^0 + 1^0 + 1^1$ in $\K[\R]$. This is in $N_{\K[\R]}$ because if we take the sum of the coefficients of the minimum terms, we get $1 + 1 \in N_\K$.
\end{example}

\begin{example}
    Every OAG idyll is a tropical extension in a natural way: $\Gamma^\idyll = \K[\Gamma]$ (again with a change of notation). For example, we have higher-rank tropical idylls such as $\T_n \coloneqq (\R^n, \lex)^\idyll = \K[\R^n]$. Moreover, there is a natural isomorphism $\T_m[\R^n] = \T_{m + n}$ (Example~\ref{ex:tropext-firstexamples}).
\end{example}

\begin{example} \label{ex:trop-reals}
    Extensions by $\S$ give signed tropical extensions. For instance, $\TR = \S[\R]$ is the tropical real idyll/hyperfield which was first introduced by Oleg Viro~\cite{V2}.

    The null-ideal of $\TR$ is given by sums where the minimum terms appear at least twice and with at least one positive and one negative term among them. E.g. $t + (-1)t + t^2$ has one positive minimum term, $t$, and one negative minimum term, $(-1)t$.
\end{example}

\begin{example}
    Extensions by $\P$ give phased tropical extensions. For example, $\TP = \TC = \P[\R]$ is the tropical phase idyll or tropical complex idyll. This was also introduced as a hyperfield by Viro (\emph{ibid.}).
\end{example}

\begin{remark} \label{rem:not-pullback}
    For the tropical reals, there is a map $\operatorname{sign} : \TR \to \S$ which give the sign of the leading coefficient. It is tempting to think that $\TR$ is isomorphic to the pullback
    \begin{center}
        \begin{tikzcd}
            \S \times \T \arrow[r] \arrow[d] & \S \arrow[d] \\
            \T \arrow[r]                     & \K          
        \end{tikzcd}
    \end{center}
    but this is not the case.
    In $\S \times \T$, we have the relation $0 \leqslant 1^0 + 1^0 + (-1)^1$ because its images in $\S$ and $\T$ are relations. However, this is not a relation in $\TR$ since among the terms of minimal valuation, they are all positive.

    See \cite[Section~5.5]{L} for a discussion of various (co)limits in the category of ordered blueprints.
\end{remark}

\subsection{Newton Polygons and Initial forms}
\subsubsection{Newton Polygons} \label{sec:newt-poly}
Associated to polynomials over a tropical extension or over a valued field, is an object called the Newton polygon. To define this, we require a rank-$1$ valuation $v : B \to \T$.

\begin{definition}
    We define a \emph{lower inequality} on $\R^2$ to be an inequality of the form $\langle u, x \rangle \ge c$ for some $c \in \R$ and some $u$ is in the upper half plane: $u \in \{ (u_1, u_2) : u_2 \ge 0 \}$. Every lower inequality creates a halfspace $H(u, c) = \{x : \langle u, x \rangle \ge c\}$.

    Given a set of points $S \subset \R^2$, its \emph{Lower Convex Hull} is defined as the intersection of the halfspaces containing $S$ where $u = (u_1, u_2)$ is in the upper half plane:
    \[
        \LCH(S) = \bigcap \{ H(u, c) : S \subseteq H(u, c), u_2 \ge 0 \}.
    \]
\end{definition}

\begin{definition}
    Let $v : B \to \T$ be a valuation on $B$ and let $f \in B[x], f = \sum_I b_i x^i$ be a polynomial. The \emph{Newton polygon} of $f$ is
    \[
        \newt(f) = \LCH\left( \{(i, v^\bullet(b_i)) : i \in I \} \right).
    \]
    Additionally, by an \emph{edge} of the Newton polygon, we will always mean a bounded edge.
\end{definition}

\begin{example} \label{ex:newt-poly}
    Consider the polynomial $f = 2 + 1x + 0x^2 + 0x^3 + 2x^4 + 1x^5 \in \T[x]$ where $v : \T \to \T$ is the identity. The Newton polygon of $f$ is shown in in Figure~\ref{fig:newt-poly}.
\end{example}

\begin{figure}[htbp]
    \centering

    \begin{tikzpicture}
        \filldraw [fill=black!10, draw=black] (0,2.5) -- (0,2) -- (2,0) -- (3,0) -- (5,1) -- (5,2.5);
        \foreach \x/\y in {0/2, 1/1, 2/0, 3/0, 4/2, 5/1}
            \filldraw (\x,\y) circle (0.5mm);
    \end{tikzpicture}

    \caption{Newton polygon of $f$ in Example~\ref{ex:newt-poly}}
    \label{fig:newt-poly}
\end{figure}
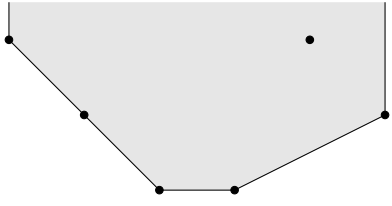

\subsection{Initial Forms}
Now we will define a ``leading coefficient'' and initial form operator for tropical extensions. First, for split extensions, we take the following definition.

\begin{definition} \label{def:split-lc-in}
    For the split extension $B[\Gamma]$, define $\lc^\bullet : B[\Gamma]^\bullet \to B^\bullet$ by $\lc^\bullet(b^\gamma) = b$. This does not induce a morphism of ordered blueprints (c.f.\ Remark~\ref{rem:not-pullback}).

    If $\gamma \in \Gamma$, define $\In_\gamma : B[\Gamma][x] \to B[x]$ by
    \[
        \In_\gamma\left( \sum b_i^{\gamma_i} x^i \right) = \sum_I \lc^\bullet(b_i^{\gamma_i}) x^i
    \]
    where $I = \{i : \gamma_i + i\gamma \text{ is minimal}\}$.
\end{definition}

\begin{example} \label{ex:initial-form}
    Consider the polynomial $f = 2 + 1x + 0x^2 + 0x^3 + 2x^4 + 1x^5 \in \T[x]$ from Example~\ref{ex:newt-poly} whose Newton polygon is shown in Figure~\ref{fig:newt-poly}

    The Newton polygon of $f$ has edges of slope $-1, 0, \frac12$ and the corresponding initial forms are $\In_1 f = 1 + x + x^2, \In_0 f = x^2 + x^3$ and $\In_{-1/2} f = x^3 + x^6 \in \K[x]$. All other initial forms of $f$ are monomials.
\end{example}

\subsubsection{Newton Polygons for Higher-Rank} \label{sec:higher-rank}
Consider a polynomial $f = \sum b_ix^i$ with coefficients in $\T_n = \K[\R^n] = (\R^n, \lex)^\idyll$ where $\lex$ is the lexicographic order from Example~\ref{ex:lex-order}. Or more generally, we could have coefficients in $B$ where $B$ is equipped with a valuation $v : B \to \T_n$. In the previous section, we gave a definition of an initial form $\In_{\gamma}(f)$ which covers this, but the connection to Newton polygons is less clear. To figure out how to define this, we are going to consider a sequence of rank-$1$ valuations using the natural identity $\T_n = \T_{n-1}[\R]$.

Define $v_n : \T_n \to \T$ as the valuation on $\T_{n-1}[\R]$. Explicitly, given $\gamma = (\gamma_1, \dots, \gamma_n) \in (\R^n, \lex)$, we have $v_n(\gamma) = \gamma_1$. Let $\In^{v}_\gamma$ denote the initial form operator with respect to an extension $B[\Gamma] \xrightarrow{v} \Gamma$. So for example, $\In^{v_n}_{\gamma_1}$ means we are considering $\T_n$ as an extension of $\R$ by $\T_{n-1}$ rather than as an extension of $\R^n$ by $\T_0 = \K$. With this, we have the following lemma.

\begin{lemma} \label{lem:inductive-in}
    With the notation as above, we have
    \[
        \In^v_\gamma(f) = \In^{v_1}_{\gamma_n}( \cdots \In^{v_{n-1}}_{\gamma_2}(\In^{v_n}_{\gamma_1}(f))).
    \]
\end{lemma}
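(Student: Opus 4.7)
The plan is to prove the lemma by induction on $n$, with the base case $n = 1$ being tautological since both sides reduce to $\In^{v_1}_{\gamma_1}(f)$. The whole argument is a matter of unwinding Definition~\ref{def:split-lc-in} and appealing to the recursive character of the lexicographic order on $\R^n$.

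For the inductive step, write $f = \sum_i 1^{\gamma_i} x^i$ with $\gamma_i = (\gamma_{i,1}, \ldots, \gamma_{i,n}) \in \R^n$. Under the canonical identification $\T_n = \T_{n-1}[\R]$, the coefficient $1^{\gamma_i} \in \T_n$ corresponds to $(1^{(\gamma_{i,2}, \ldots, \gamma_{i,n})})^{\gamma_{i,1}}$, and applying Definition~\ref{def:split-lc-in} to the extension $v_n : \T_n \to \T$ gives
$$\In^{v_n}_{\gamma_1}(f) \;=\; \sum_{i \in I_1} 1^{(\gamma_{i,2}, \ldots, \gamma_{i,n})}\, x^i \;\in\; \T_{n-1}[x],$$
where $I_1 = \{i : \gamma_{i,1} + i\gamma_1 \text{ is minimal}\}$. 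Then apply the inductive hypothesis to this polynomial over $\T_{n-1}$ at the truncated exponent $\gamma' = (\gamma_2, \ldots, \gamma_n) \in \R^{n-1}$: the iterated right-hand side equals $\sum_{i \in I} x^i$, where $I \subseteq I_1$ consists of those indices for which $(\gamma_{i,2} + i\gamma_2, \ldots, \gamma_{i,n} + i\gamma_n)$ is lex-minimal in $\R^{n-1}$.

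It remains to identify this $I$ with the set $J := \{i : \gamma_i + i\gamma \text{ is lex-minimal in } \R^n\}$ appearing in $\In^v_\gamma(f)$. This is exactly the recursive characterization of the lex order from Example~\ref{ex:lex-order}: an index $i$ minimizes $(a_{i,1}, \ldots, a_{i,n})$ in $\lex$ iff $a_{i,1}$ is the minimum of the first coordinates, and, among those achieving this minimum, $(a_{i,2}, \ldots, a_{i,n})$ is lex-minimal in $\R^{n-1}$. Applied with $a_{i,k} = \gamma_{i,k} + i\gamma_k$ this gives $J = I$. Since in both expressions the coefficients collapse to $1 \in \K$ (the iterated $\lc^\bullet$ strips one coordinate per step), the polynomials agree term-by-term.

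The main obstacle is purely notational: keeping the identification $\T_n \cong \T_{n-1}[\R]$ straight and checking that the $\lc^\bullet$ operator peels off exactly the first coordinate at each stage. There is no combinatorial or algebraic depth beyond the recursive definition of $\lex$, so once the notation is pinned down the verification is immediate.
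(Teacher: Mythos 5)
Your proof is correct and takes essentially the same route as the paper's: induction on $n$ using the identification $\T_n = \T_{n-1}[\R]$, with the key step being that the lex-minimizers are exactly the minimizers of the first coordinate whose truncation is lex-minimal in $\R^{n-1}$. The paper illustrates only the $n=2$ case (proving the coincidence of minimal values $\lambda_0' = \lambda_0$ by contradiction) where you write out the full induction and appeal directly to the recursive definition of $\lex$, but the underlying argument is the same; as a small aside, the final display in the paper's proof appears to have $v_1$ and $v_2$ transposed relative to the lemma statement, and your version has them in the intended order.
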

Therefore, when we consider a higher-rank valuation, we are thinking about a sequence of Newton polygons rather than one Newton polytope.

\begin{proof}
    This is an inductive statement so to simplify notation, we will use $n = 2$ to illustrate.

    Let $f = \sum (a_i, b_i)x^i \in \T_2[x]$ and let $\gamma = (\lambda, \mu) \in \T_2^\times$. Let $I = \{i : (a_i, b_i) + i(\lambda, \mu) \text{ is minimal}\}$ and let that minimal value be $(\lambda_0, \mu_0)$. Next, let $I_1 = \{i : a_i + i\lambda \text{ is minimal}\}$ and let that minimal value be $\lambda_0'$. First, we claim that $\lambda_0' = \lambda_0$.

    If not, then we must have $\lambda_0' < \lambda_0$ or else $\lambda_0$ would be minimal minimal for $I_1$ as well. Now if $(\lambda_0, \mu_0) = (a_{i_0}, b_{i_0}) + i_0(\lambda, \mu)$ and $\lambda_0' = a_{i_1} + i_1\lambda$, then by definition (Example~\ref{ex:lex-order}), it must be that $\lambda_0' \ge \lambda_0$ or else $(a_{i_1}, b_{i_1}) + i_1(\lambda,\mu) <_{\rm lex} (\lambda_0, \mu_0)$ and this contradicts minimality.

    Now define $I_2 = \{i \in I_1 : b_i + i\mu \text{ is minimal}\}$ and we have likewise that this minimal value is $\mu_0$. I.e.\ we have $I_2 = I$. Putting this together, we have
    \[
        \In^v_{(\lambda, \mu)} = \sum_I x^i = \In^{v_2}_\mu\left( \sum_{I_1} b_ix^i \right) = \In^{v_2}_\mu (\In^{v_1}_{\lambda}(f)). \qedhere
    \]
\end{proof}

If the coefficients were in $B[\R^n]$ rather than $\T_n = \K[\R^n]$, then everything works the same by changing notation from $\sum \gamma_i x^i = \sum (1_{\K})^{\gamma_i} x^i$ to $\sum c_i^{\gamma_i} x^i$.

\begin{remark}
    Lemma~\ref{lem:inductive-in} demonstrates that we can apply Theorem~\ref{thm:split} inductively by considering a sequence of rank-1 extensions.
\end{remark}

\subsubsection{The Non-Split Case} \label{sec:non-split-case}
If $C \in \Ext^1(\Gamma, B)$ is a non-split extension, then the leading coefficient map is no longer well defined because we can no longer simply divide by $t^\gamma$. Instead, for every fixed element $c_0 \in C^\times$ with $v^\bullet(c) = \gamma$, we get a map $\{c \in C^\times : v^\bullet(c) = \gamma\} \to B^\times$ by dividing by $c_0$ and this map depends on the choice of $c_0$, i.e. this is a torsor for $B^\times$.

\begin{definition} \label{def:torsors}
    Let $C \in \Ext^1(\Gamma, B)$ be a tropical extension. Because the sequence $B^\times \to C^\times \to \Gamma$ is exact, there is a natural identification of $B^\times$ with the group $B^0 \coloneqq \{c \in C^\times : v^\bullet(c) = 0\}$.
    More generally, let us define $B^\gamma = \{c \in C^\times : v^\bullet(c) = \gamma\}$ and $B^\infty = \{0_C\}$. This gives a grading $C^\bullet = \bigcup_{\gamma \in \Gamma^\bullet} B^\gamma$ where multiplication is graded: $\cdot : B^{\gamma} \times B^{\gamma'} \to B^{\gamma + \gamma'}$.
    In particular, the pairing $B^0 \times B^\gamma \to B^\gamma$ makes $B^\gamma$ into a $B^0$-torsor.

    We will define the leading coefficient map $\lc^\bullet : C^\bullet \to \bigcup_{\gamma \in \Gamma^\bullet} B^\gamma$ which literally is the identity, but we give a name to this to keep the notation consistent. This also helps remind us that the output is in a specific torsor for $B$.
\end{definition}

So now, instead of having initial forms with coefficients in $B$, the coefficients will be in one of these torsors.

\begin{definition} \label{def:nonsplit-in}
    Let $C \in \Ext^1(\Gamma, B)$ be a tropical extension and let $f = \sum c_ix^i \in C[x]$ be a polynomial. Let $a \in C$ be a root of $f$ with valuation $\gamma_1$ and let $\gamma_0 = \min\{v^\bullet(c_i) + i\gamma_1\}$. We will say that $a$ \emph{corresponds to the line} $\ell = \{\gamma_0 - i \gamma_1 : i \in \N\}$.

    Let $I = \{i : v^\bullet(c_i) = \gamma_0 - i \gamma_1 \}$. We define the initial form with respect to $a$ (rather than with respect to $\gamma_1$) as
    \[
        \In_a(f) = \sum_{i \in I} \lc^\bullet(c_i)(ax)^i \in B^{\gamma_0}[x].
    \]
\end{definition}

\begin{remark} \label{rem:initial-forms-identity}
    For split extensions, we have two initial forms. First, we have $\In_\gamma f \in B[x]$ from Definition~\ref{def:split-lc-in}. Second, we have $\In_{a} f \in B^{\gamma_0}[x]$ from Definition~\ref{def:nonsplit-in}. These two polynomials are related via the natural identification $B^\times = B^0$ and the identity
    \[
        \In_\gamma f = 1^{-\gamma_0} \In_{1^\gamma} f.
    \]

    Additionally, if $a = b^\gamma$, then
    \[
        \In_{b^\gamma} f(x) = \In_{1^\gamma} f(bx).
    \]
\end{remark}

\section{Factoring Polynomials and Multiplicities over Idylls} \label{sec:factoring}
We now investigate factoring and multiplicities. First, we will do this for $B[x]$ and show that these notions are an extension to idylls of the Baker-Lorscheid multiplicity operator for hyperfields. Second, we will define this for $B^{\gamma_0}[x]$ and we will see that all the ways to identify $B^{\gamma_0} \cong B$ lead to the same multiplicities and factors.

\subsection{Roots of Polynomials}
There are two serviceable definitions of what it means for a polynomial to have a root. Classically, we can say that $f(x)$ has a root $a$ if $f(a) = 0$ or if $(x - a) \mid f(x)$. For idylls, we will take the latter as the definition and explain in which context, the two definitions agree.

\begin{definition} \label{def:root}
    Let $f(x) = \sum_{i = 0}^n c_ix^i$ be a polynomial over an idyll $B$ and let $a \in B^\bullet$. We will say that $a$ is a \emph{root} of $f$ if there exists a \emph{factorization} $0 \leqslant f(x) - (x - a)g(x)$ for some polynomial $g(x) = \sum d_i x^i$. I.e.\ if $0 \leqslant c_i - d_{i - 1} + ad_i$ for all $i$ (treating the coefficients as infinite sequences with a finite support).
\end{definition}

\begin{definition} \label{def:curlyeq}
    It will be convenient to define a relation $\preccurlyeq$ by $x \preccurlyeq y$ if $0 \leqslant -x + y$. So we will write factorizations as $f(x) \preccurlyeq (x - a)g(x)$ and $c_i \preccurlyeq d_{i - 1} - ad_i$.
\end{definition}

There is a context in which Definition~\ref{def:root} is equivalent to $0 \leqslant f(a)$, called \emph{pastures}. There are a few equivalent definitions of pastures in the literature, we give one of them here.
\begin{definition}
    An ordered blueprint is \emph{reversible} if it contains an element $\epsilon = \epsilon_B$ such that $\epsilon^2 = 1$, we have the relation $0 \leqslant 1 + \epsilon$, and such that if $a, b \in B^\bullet, x \in \N[B^\bullet]$ then $a \leqslant b + x$ implies $\epsilon b \leqslant \epsilon a + x$. By \cite[Lemma~5.6.34]{L}, $\epsilon$ is unique and so is any additive inverse of $a$ for any $a \in B^\bullet$. As with idylls, we will write $-1$ and $-a$ instead of $\epsilon$ and $\epsilon a$.

    A \emph{pasture} is a reversible ordered blueprint generated by relations of the form $a \leqslant b + c$ with $a, b, c \in B^\times$ as well as the relation $0 \leqslant 1 + (-1)$.
\end{definition}

\begin{remark}
    If $B$ is a pasture, its idyllic part $B^\idyll$ satisfies an axiom known as \emph{fusion} where if $a \in B^\bullet$ and $x, y \in \N[B^\bullet]$ then $0 \leqslant x + a$ and $0 \leqslant y - a$ implies $0 \leqslant x + y$.
\end{remark}

\begin{proof}
    By reversibility, $0 \leqslant x + a$ implies $-a \leqslant x$ and $0 \leqslant y - a$ implies $a \leqslant y$. Adding these together, we have
    \[
        0 \leqslant (-a) + a \leqslant x + y. \qedhere
    \]
\end{proof}

\begin{remark}
    The fusion rule is discussed in detail in a paper of Baker and Zhang \cite{BZ}. It is possible to define a pasture as an idyll generated by three-term relations $0 \leqslant a + b + c$ and fusion (the idyllic part of what we have just defined). Just looking at idylls generated by three-term relations but without the fusion axiom gives a \emph{nonequivalent} definition of pasture such as \cite[Definition~6.19]{BL2}.
\end{remark}

\begin{remark} \label{rem:fissure-rule}
    If $B$ is a pasture, then we can break apart longer relations into three-term relations inductively. This procedure is known as \emph{fissure}. If $a_i \in B^\bullet$ and $a_0 \leqslant a_1 + \dots + a_n$ then there exists a $t \in B^\bullet$ for which $a_0 \leqslant a_1 + t$ and $t \leqslant a_2 + \dots + a_n$.
    A consequence of fissure is that $0 \leqslant a + b + c$ if and only if $-a \leqslant b + c$. A consequence of that consequence is that we can recover a pasture from its idyllic part.

    Because of this, we can also view pastures as a subcategory of idylls. Moreover, the relation $\preccurlyeq$ is the same as $\leqslant$ for pastures.
\end{remark}

For pastures, the two definitions of ``$a$ is a root of $f$'' are equivalent. The proof of this is a translation of Lemma~A in \cite{BL} to the language of pastures.

\begin{proposition} \label{prop:pasture-equivalence}
    If $B$ is a pasture and $f \in B[x]$ is a polynomial, then for any $a \in B^\bullet$, $0 \leqslant f(a)$ if and only if there exists a polynomial $g \in B[x]$ for which $f(x) \leqslant (x - a)g(x)$.
\end{proposition}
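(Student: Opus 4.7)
The plan is to split the biconditional into its two implications, where the forward direction $(\Leftarrow)$ reduces to evaluating at $a$, and the reverse direction $(\Rightarrow)$ is essentially synthetic division enabled by the fissure rule of Remark~\ref{rem:fissure-rule}.

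For $(\Leftarrow)$: assume $f(x) \leqslant (x-a) g(x)$, which in a pasture is equivalent to $0 \leqslant f(x) - (x-a) g(x)$ in $B[x]$. The evaluation map $\ev_a \colon B[x] \to B$, $x \mapsto a$, is a morphism of ordered blueprints, so applying it yields $0 \leqslant f(a) - (a-a) g(a) = f(a)$.

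For $(\Rightarrow)$: write $f(x) = \sum_{i=0}^{n} c_i x^i$. If $a = 0$, take $g(x) = \sum_{i=1}^{n} c_i x^{i-1}$; the required factorization relation reduces, coefficient by coefficient, to $0 \leqslant c_0 = f(0)$, which is the hypothesis. If $a \in B^\times$, the hypothesis reads $0 \leqslant c_0 + c_1 a + \cdots + c_n a^n$ in $B$. Iteratively applying fissure yields elements $t_1, \ldots, t_{n-1} \in B^\bullet$ such that
\[
    0 \leqslant c_0 + t_1, \qquad t_i \leqslant c_i a^i + t_{i+1} \text{ for } 1 \le i \le n-2, \qquad t_{n-1} \leqslant c_{n-1} a^{n-1} + c_n a^n.
\]
Set $d_{n-1} = c_n$ and $d_{i-1} = t_i a^{-i}$ for $1 \le i \le n-1$, and let $g(x) = \sum_{i=0}^{n-1} d_i x^i$. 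Multiplying each fissure relation by the appropriate power of $a^{-1}$ and using reversibility to swap signs verifies, in each degree $i$, the relation $0 \leqslant -c_i + d_{i-1} - a d_i$ (with the conventions $d_{-1} = d_n = 0$), which is precisely the factorization condition.

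The main obstacle is invoking fissure in the correct form: it is the pasture hypothesis, not merely the idyll axioms, that lets us split the $(n{+}1)$-term relation $0 \leqslant \sum c_i a^i$ into the three-term pieces on which the synthetic-division construction rests. Invertibility of $a$ in the main case is also essential so that dividing the fissured relations by $a^i$ is legitimate. With both tools in hand, what remains is a routine coefficient-by-coefficient verification.
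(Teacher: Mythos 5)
Your $(\Leftarrow)$ direction has a genuine gap. The evaluation map $\ev_a\colon B[x]\to B$ is indeed a morphism, but a morphism only transports the relation $0 \leqslant f(x)-(x-a)g(x)$ to the relation between the \emph{images of these formal sums} in $\N[B^\bullet]$; it does not simplify that image. Writing $g=\sum d_ix^i$, what you actually get is
\[
0 \;\leqslant\; f(a) \;+\; \sum_i \left( d_ia^{i+1} + (-1)d_ia^{i+1} \right),
\]
not $0\leqslant f(a)$: in $\N[B^\bullet]$ the formal sum $c+(-c)$ is not $0$, so the step ``$(a-a)g(a)=0$'' is not available. Discarding such paired terms is precisely the kind of cancellation that fails in general idylls and even in hyperfields --- in the sign hyperfield $\S$ one has $0\leqslant 1+1+(-1)$ but not $0\leqslant 1$ --- so it has to be earned from the pasture axioms rather than asserted. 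The fix is to work coefficient-wise: from $0\leqslant c_i - d_{i-1}+ad_i$, multiply the $i$-th relation by $a^i$ and apply fusion repeatedly, cancelling $d_{n-1}a^{n}$, then $d_{n-2}a^{n-1}$, and so on, until only $0\leqslant \sum c_ia^i=f(a)$ remains; this is exactly what the paper means by ``going backwards'' through the chain of inequalities. (For $a=0$ your evaluation argument is fine, since the offending terms literally vanish.)

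Your $(\Rightarrow)$ direction is essentially the paper's proof: fissure splits the long relation $0\leqslant \sum c_ia^i$ into a chain with intermediate elements $t_i$, and dividing by powers of the unit $a$ yields the quotient's coefficients; your bookkeeping $d_{i-1}=t_ia^{-i}$, $d_{n-1}=c_n$ (the paper's convention $-a^ic_i=t_{i+1}$ differs only by a sign choice) verifies the factorization relations degree by degree, and the separate treatment of $a=0$ matches the paper as well.
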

\begin{proof}
    First, if $a = 0$ then $f(0) = a_0$ and we have $f(0) = a_0 \geqslant 0$ if and only if each term in $f(x)$ is a multiple of $x$ and we can factor $f(x) = xg(x)$.

    Second, if $a \neq 0$, then by Remark~\ref{rem:fissure-rule}, $f(a) \geqslant 0$ means that there exists a sequence $t_1, t_2, \dots, t_n$ where $t_n = a^n$ and
    \begin{equation} \label{eq:factor-coef-equations}
        0 \leqslant b_0 + t_1 \text{ and } t_i \leqslant b_ia^i + t_{i + 1}, \text{ for } i = 1,\dots,n-1.
    \end{equation}
    In particular, we have the following sequence of inequalities:
    \begin{equation} \label{eq:ev-is-zero}
        0 \leqslant b_0 + t_1 \leqslant b_0 + b_1a + t_2 \leqslant \cdots \leqslant b_0 + b_1a + \dots + b_{n-1}a^{n-1} + a^n.
    \end{equation}

    Now, let us define a sequence $c_0, \dots, c_{n-1}$ by the equations $-a^ic_i = t_{i + 1}$ for $i = 0, \dots, n - 1$. Then the inequalities in \eqref{eq:factor-coef-equations} say
    \[
        0 \leqslant b_0 - c_0, \text{ and } -a^{i-1}c_{i-1} \leqslant b_ia^i - a^ic_i \iff b_i \leqslant c_i - ac_{i - 1}.
    \]
    These are exactly the inequalities which say that $f(x) \leqslant (x - a)g(x)$ where $g(x) = \sum c_ix^i$.

    Conversely, if we know that $f(x) \leqslant (x - a)g(x)$, then we can go backwards and construct a sequence $t_i$ such that the chain of inequalities in \eqref{eq:ev-is-zero} hold.
\end{proof}

\subsection{Multiplicities}
Let us return back to idylls and recall the definition of multiplicities.

\defmult*

Examples of factorizations are given in Appendix~\ref{sec:factorization}.

\subsubsection{Morphisms and multiplicities}
The next task is to show that morphisms preserve factorizations and hence multiplicities cannot decrease after applying a homomorphism. Additionally, we will verify that under isomorphism, multiplicities are the same and we will apply this to define multiplicities for initial forms.

\begin{proposition} \label{prop:mult-morph}
    Let $\vf : B \to B'$ be a homomorphism between two idylls. Let $f = \sum b_i x^i \in B[x]$ be a polynomial, let $a \in B^\bullet$ and let $a' = \vf(a), b_i' = \vf(b_i)$. Then
    \[
        \mult^B_a(f) \le \mult^{B'}_{a'}(\vf(f))
    \]
    where $\vf(f) = \sum b_i'x^i \in B'[x]$.
\end{proposition}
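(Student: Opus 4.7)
The plan is to induct on $n = \mult^B_a(f)$. The base case $n = 0$ is vacuous since multiplicities are nonnegative integers. So assume the statement holds whenever the source multiplicity is less than $n$, and that $\mult^B_a(f) = n \geq 1$.

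By Definition~\ref{def:mult}, since $\mult^B_a(f) \geq 1$, there exists some polynomial $g \in B[x]$ realizing a factorization $f \preccurlyeq (x - a)g$ with $\mult^B_a(g) \geq n - 1$. Unwinding Definition~\ref{def:root} and Definition~\ref{def:curlyeq}, this factorization means that for every degree $i$, the coefficient-wise relation $0 \leqslant c_i - d_{i-1} + ad_i$ holds in $B$, where $f = \sum c_i x^i$ and $g = \sum d_i x^i$. Now the point is that a morphism of idylls is by Definition~\ref{def:morphism} a monoid map whose induced map on $\N[B^\bullet]$ is order-preserving; in particular $\vf$ sends $N_B$ into $N_{B'}$. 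Applying $\vf$ to each coefficient-wise relation yields $0 \leqslant \vf(c_i) - \vf(d_{i-1}) + \vf(a)\vf(d_i)$ in $B'$, which assembles into $\vf(f) \preccurlyeq (x - a')\vf(g)$ in $B'[x]$.

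Thus $\vf(g)$ witnesses that $\vf(f)$ admits a factorization by $(x - a')$, so
\[
    \mult^{B'}_{a'}(\vf(f)) \geq 1 + \mult^{B'}_{a'}(\vf(g)).
\]
By the inductive hypothesis applied to $g$, we have $\mult^{B'}_{a'}(\vf(g)) \geq \mult^B_a(g) \geq n - 1$, so $\mult^{B'}_{a'}(\vf(f)) \geq n$, completing the induction.

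There is no serious obstacle: the entire content is that morphisms preserve the coefficient-wise null relations which define factorization, and this is immediate from the definition of a morphism of idylls. The only thing to be careful about is bookkeeping the recursion in Definition~\ref{def:mult}, which is why the proof is phrased as an induction on the source multiplicity rather than as a direct equality statement (equality need not hold, since $\vf$ may create new factorizations that do not lift to $B$).
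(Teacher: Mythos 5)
Your proof is correct and follows essentially the same route as the paper's. The paper isolates the key step as a standalone lemma stating that the induced map $\vf : B[x] \to B'[x]$ preserves all factorizations $f \preccurlyeq gh$ (proved by exactly your coefficient-wise argument of breaking the relation in $B[x]$ into a family of relations in $B$ and applying $\vf$ termwise), and then invokes the lemma along a maximal-length factorization chain; you inline the same argument for the special case $gh = (x-a)g$ and organize the bookkeeping as an explicit induction on $\mult^B_a(f)$. These are the same proof packaged differently. One small remark: your base case $n = 0$ is not \emph{vacuous} but \emph{trivially true} ($\mult^{B'}_{a'}(\vf(f)) \geq 0$ always holds), and if one worries about $\mult^B_a(f) = \infty$ it is cleaner to phrase the induction as ``for every $n$, $\mult^B_a(f) \geq n$ implies $\mult^{B'}_{a'}(\vf(f)) \geq n$,'' which your argument already establishes.
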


\begin{lemma}
    A homomorphism $\vf : B \to B'$ induces a homomorphism $\vf : B[x] \to B'[x]$ which is multiplicative. I.e.\ if $f \preccurlyeq gh$ then $\vf(f) \preccurlyeq \vf(g)\vf(h)$.
\end{lemma}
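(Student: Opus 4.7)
The plan is to construct the induced map in stages and then deduce multiplicativity from the fact that morphisms of idylls preserve both the null-ideal and additive inverses. First, I would define the induced monoid homomorphism $\vf^\bullet : B[x]^\bullet \to B'[x]^\bullet$ by $\vf^\bullet(bx^n) = \vf^\bullet(b)x^n$. This is well-defined on the quotient (since $\vf^\bullet(0_B) = 0_{B'}$), sends $1$ to $1$, and respects the product $(bx^m)(cx^n) = (bc)x^{m+n}$ because $\vf^\bullet$ is multiplicative on $B^\bullet$. Extending $\N$-linearly produces a map on the free semirings $\N[B[x]^\bullet] \to \N[B'[x]^\bullet]$.

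Next, I would verify that this extended map is a morphism of ordered blueprints, i.e.\ that it sends $N_{B[x]}$ into $N_{B'[x]}$. By Definition~\ref{def:poly-ext}, $N_{B[x]}$ is the ideal in $\N[B[x]^\bullet]$ generated by $N_B$, so it suffices to check on generators of the form $cx^n \cdot \sum b_i$ with $\sum b_i \in N_B$. Since $\vf$ is a morphism of idylls, $\sum \vf^\bullet(b_i) \in N_{B'}$, and hence $\vf^\bullet(c)x^n \sum \vf^\bullet(b_i) \in N_{B'[x]}$; closure under addition in $N_{B'[x]}$ handles arbitrary elements of $N_{B[x]}$.

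For the multiplicativity clause, recall from Definition~\ref{def:curlyeq} that $f \preccurlyeq gh$ means $0 \leqslant -f + gh$ in $B[x]$, equivalently $-f + gh \in N_{B[x]}$. The only things that need to be verified are that $\vf$ commutes with negation and with the product $gh$. For products, $\vf$ is $\N$-linear on $\N[B[x]^\bullet]$ and a monoid homomorphism on $B[x]^\bullet$, so $\vf(gh) = \vf(g)\vf(h)$ via the usual convolution of coefficients. For negation, $\vf^\bullet(\epsilon_B)^2 = \vf^\bullet(1_B) = 1_{B'}$ and $1 + \vf^\bullet(\epsilon_B) = \vf(1 + \epsilon_B) \in N_{B'}$, so the uniqueness clause in Definition~\ref{def:idyll} forces $\vf^\bullet(\epsilon_B) = \epsilon_{B'}$, and multiplying by $\vf^\bullet(a)$ gives $\vf^\bullet(-a) = -\vf^\bullet(a)$ for every $a \in B^\bullet$. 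Combining these, $\vf(-f + gh) = -\vf(f) + \vf(g)\vf(h)$, which lies in $N_{B'[x]}$ by the preceding paragraph, giving $\vf(f) \preccurlyeq \vf(g)\vf(h)$.

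There is no serious obstacle here: the only mildly subtle point is ensuring $\vf(-f) = -\vf(f)$, which rests entirely on the uniqueness of the distinguished involution $\epsilon$ built into the definition of an idyll.
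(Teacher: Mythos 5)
Your proof is correct. It differs from the paper's mainly in packaging: the paper first records that $f \preccurlyeq gh$ in $B[x]$ is equivalent to the degreewise relations $a_k \preccurlyeq \sum_{i+j=k} b_i c_j$ in $B$, applies $\vf$ to each of these, and reassembles the result; you instead treat $-f + gh$ as a single element of $N_{B[x]}$, verify on generators $cx^n \cdot \nu$ (with $\nu \in N_B$) that the induced map carries $N_{B[x]}$ into $N_{B'[x]}$, and then use that this map is a semiring homomorphism on $\N[B[x]^\bullet]$ which commutes with negation. Your route avoids the degreewise biconditional altogether, at the cost of having to justify $\vf^\bullet(\epsilon_B) = \epsilon_{B'}$; you do this correctly via the uniqueness clause in Definition~\ref{def:idyll}, and it is worth noting that the paper's instruction to ``apply $\vf$ everywhere'' silently uses the same fact, so your version is if anything the more scrupulous one. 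The paper's degreewise formulation has the mild advantage of matching the coefficientwise description of factorization used elsewhere (e.g.\ in the proof of the lifting theorem), but both arguments rest on the same core point: $N_{B[x]}$ is the ideal generated by $N_B$, and $\vf(N_B) \subseteq N_{B'}$.
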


\begin{proof}
    Let us use the notation $a'$ for $\vf(a)$. It is a simple exercise to verify that $(ax^n)' \coloneqq a'x^n$ is a homomorphism between the two polynomial extensions $B[x]$ and $B'[x]$.

    To see that this homomorphism is multiplicative, first break apart the relation on $B[x]$ into a collection of relations on $B$ as follows.
    \[
        \sum a_kx^k \preccurlyeq \left( \sum b_i x^i \right)\left( \sum c_jx^j \right) \iff a_k \preccurlyeq \sum_{i + j = k} b_ic_j \text{ for all } k.
    \]
    Now apply $\vf$ everywhere to obtain
    \[
        a_k' \preccurlyeq \sum_{i + j = k} b_i'c_j' \text{ for all } k \implies \sum a_k'x^k \preccurlyeq \left( \sum b_i' x^i \right)\left( \sum c_j'x^j \right). \qedhere
    \]
\end{proof}

This result was first stated for hyperfields in \cite[Lemma~3.1]{G}. Proposition~\ref{prop:mult-morph} follows by applying this lemma to a sequence of factorizations of $f$ of maximal length.

Next, we look at how monomial transformations interact with multiplicities.

\begin{lemma} \label{lem:monomial-trans-mult}
    Let $\vf : B[x] \to B[x]$ be a monomial transformation given by $x \mapsto cx$. Then for any polynomial $f \in B[x]$ and $a \in B$,
    \[
        \mult^B_a (\vf(f)) = \mult^B_{ac} (f).
    \]
\end{lemma}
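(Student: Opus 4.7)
The plan is to establish a bijection between factorizations of $f$ at $ac$ and factorizations of $\vf(f)$ at $a$, and then check that corresponding cofactors have equal multiplicities. Throughout I assume $c \in B^\times$, so that $\vf$ is an isomorphism of $B[x]$ with inverse $\vf^{-1} \colon x \mapsto c^{-1}x$. By Proposition~\ref{prop:mult-morph} applied to both $\vf$ and $\vf^{-1}$, the relation $\preccurlyeq$ is not only preserved but reflected: $f \preccurlyeq gh$ in $B[x]$ if and only if $\vf(f) \preccurlyeq \vf(g)\vf(h)$.

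The key algebraic observation is the identity $\vf(x - ac) = cx - ac = c(x - a)$. Consequently, any factorization $f \preccurlyeq (x - ac)h$ pushes forward to $\vf(f) \preccurlyeq c(x - a)\vf(h) = (x - a)\bigl(c\,\vf(h)\bigr)$, and any factorization $\vf(f) \preccurlyeq (x - a)\tilde{h}$ pulls back to $f \preccurlyeq (x - ac)\bigl(c^{-1}\vf^{-1}(\tilde{h})\bigr)$. These two operations on cofactors are mutually inverse and preserve degree, producing the desired bijection.

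To compare multiplicities across the bijection, I would first record a small auxiliary fact: scaling by a unit does not change multiplicity, i.e.\ $\mult^B_a(uh) = \mult^B_a(h)$ whenever $u \in B^\times$, since $uh \preccurlyeq (x - a)g$ is equivalent to $h \preccurlyeq (x - a)(u^{-1}g)$. Combined with the inductive hypothesis applied to the strictly smaller-degree cofactor $h$, this yields $\mult^B_{ac}(h) = \mult^B_a(\vf(h)) = \mult^B_a(c\,\vf(h))$. Taking the supremum over all factorizations on each side and invoking the recursive definition of multiplicity gives $\mult^B_a(\vf(f)) = \mult^B_{ac}(f)$. The base case of the induction on $\deg f$ is automatic from the bijection: if no factorization exists on one side, none exists on the other and both multiplicities are $0$.

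The only genuine subtlety is bookkeeping the extra scalar $c$ produced when pushing the linear factor $(x - ac)$ through $\vf$; the auxiliary unit-invariance statement absorbs it cleanly, after which the argument is entirely formal.
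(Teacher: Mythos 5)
Your proposal is correct and follows essentially the same route as the paper: push a factorization through the substitution via $\vf(x-ac)=c(x-a)$, induct on the cofactor, and obtain the reverse inequality from the inverse substitution $x \mapsto c^{-1}x$. The only difference is cosmetic --- you make explicit the unit-scaling invariance $\mult^B_a(uh)=\mult^B_a(h)$ that the paper absorbs silently into its induction.
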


\begin{proof}[Proof]
    The proof of this lemma is similar to the proof of Proposition~\ref{prop:mult-morph}. First, we see that a factorization $f(x) \preccurlyeq (x - ca) g(x)$ yields a factorization
    \[
        f(cx) \preccurlyeq (cx - ca) g(cx) = (x - a) [cg(cx)].
    \]
    Then, we apply induction to obtain $\mult^B_{ac} (f) \leq \mult^B_{c} (\vf(f))$. The opposite inequality follows by considering the inverse transformation $x \mapsto c^{-1}x$.
\end{proof}

\begin{definition}\label{def:nonsplit-in-mult}
    Let $C \in \Ext^1(\Gamma, B)$, let $f \in C[x]$ be a polynomial, and let $a \in C^\bullet$ be a root of $f$ with valuation $\gamma_1$ and corresponding to the line $\ell = \{ \gamma_0 - i\gamma_1 : i \in \N\}$.

    We have $\In_a f \in B^{\gamma_0}[x]$ and by Lemma~\ref{lem:monomial-trans-mult}, the monomial substitution $x \mapsto ax$ in the definition of $\In_a f$ (\ref{def:nonsplit-in}) does not affect the multiplicity. Additionally, for any $c \in B^{\gamma_0}$, multiplication by $c^{-1}$ gives an isomorphism $B^{\gamma_0} \to B^0 = B^\times$ which again preserves multiplicity. Therefore, the quantity
    \[
        \mult^B_{\lc^\bullet(a)}(\In_a f) \coloneqq \mult^{C}_1(c^{-1} \In_a f)
    \]
    is well-defined. We take this as the general definition of a multiplicity for an initial form.
\end{definition}

For split extensions, this multiplicity agrees with the multiplicity of the initial form defined in \ref{def:split-lc-in}. This extends Remark~\ref{rem:initial-forms-identity}.

\begin{proposition} \label{prop:equivalent}
    If $C = B[\Gamma]$ is a split extension, and $a \in C^\bullet$ has valuation $\gamma$, then $\mult^B_{\lc^\bullet(a)}(\In_a f)$ as defined in \ref{def:nonsplit-in-mult} is equal to $\mult^B_{\lc^\bullet(a)}(\In_\gamma f)$ as defined in \ref{def:mult}.
\end{proposition}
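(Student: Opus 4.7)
The plan is to unwind the left-hand side using Remark~\ref{rem:initial-forms-identity} and Lemma~\ref{lem:monomial-trans-mult}, reducing the statement to a comparison of multiplicities under the inclusion $B \hookrightarrow C = B[\Gamma]$ for a polynomial in $B[x]$ at a point of $B$.

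Writing $a = b^\gamma$ so that $b = \lc^\bullet(a) \in B^\times$, Remark~\ref{rem:initial-forms-identity} gives $\In_a f(x) = 1^{\gamma_0} \In_\gamma f(bx)$. Choosing the basepoint $c = 1^{\gamma_0} \in B^{\gamma_0}$ in Definition~\ref{def:nonsplit-in-mult} makes $c^{-1} \In_a f(x)$ equal to $\In_\gamma f(bx) \in B[x] \subseteq C[x]$. Applying Lemma~\ref{lem:monomial-trans-mult} to the monomial transformation $x \mapsto bx$ in $C[x]$ yields $\mult^C_1(c^{-1} \In_a f) = \mult^C_b(\In_\gamma f)$. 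Thus the proposition reduces to the claim that
\[
    \mult^C_b(p) = \mult^B_b(p) \quad\text{for every } p \in B[x] \text{ and every } b \in B^\bullet.
\]

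The inequality $\mult^B_b(p) \leq \mult^C_b(p)$ is immediate from Proposition~\ref{prop:mult-morph} applied to the inclusion $\iota : B \hookrightarrow C$. For the reverse, I would take a maximal factorization chain $p = g_0 \preccurlyeq (x-b) g_1 \preccurlyeq \cdots \preccurlyeq (x-b) g_k$ of length $k = \mult^C_b(p)$ in $C[x]$ and replace each $g_i = \sum_j d_j^{(i)} x^j$ by the truncation $\tilde{g}_i$ obtained by zeroing every coefficient of nonzero valuation. A backward induction on $j$ using the factorization relations $d_j^{(i-1)} - d_{j-1}^{(i)} + b d_j^{(i)} \in N_C$, together with the fact that no single nonzero element of $C^\bullet$ can lie in $N_C$ (since $N_C$ is a proper ideal), shows that $v^\bullet(d_j^{(i)}) \geq 0$ for all $i, j$, so each $\tilde{g}_i \in B[x]$ is well-defined.

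The main obstacle will be to verify that $\tilde{g}_{i-1} \preccurlyeq (x-b) \tilde{g}_i$ still holds in $B[x]$ for each step of the modified chain. This is a case analysis on the three valuations $v^\bullet(d_j^{(i-1)}), v^\bullet(d_{j-1}^{(i)}), v^\bullet(d_j^{(i)})$, applying the characterization of $N_C$ from Definition~\ref{def:tropext}: in each case, the terms surviving the truncation are precisely the valuation-zero terms of the original relation, which by definition form its minimum-valuation subsum and so lie in $N_B$ after identifying $B^0 = B^\times$; the potentially degenerate configurations where a single valuation-zero term would stand alone are ruled out by the same single-element observation used above. Iterating the truncation produces a chain of length $k$ in $B[x]$, so $\mult^B_b(p) \geq k$, completing the reduction and giving the full chain $\mult^B_{\lc^\bullet(a)}(\In_a f) = \mult^C_b(\In_\gamma f) = \mult^B_b(\In_\gamma f) = \mult^B_{\lc^\bullet(a)}(\In_\gamma f)$.
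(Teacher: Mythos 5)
Your proposal is correct and follows essentially the same route as the paper: unwind $\In_a f$ via Remark~\ref{rem:initial-forms-identity} and Lemma~\ref{lem:monomial-trans-mult}, get one inequality from Proposition~\ref{prop:mult-morph}, and get the reverse by truncating the quotient polynomials to their valuation-zero parts using the minimal-valuation characterization of $N_C$. The only difference is cosmetic: you work at the point $b$ in $C[x]$ rather than at $1$ after the substitution $x \mapsto bx$, and you spell out the backward induction showing all quotient coefficients have nonnegative valuation, a step the paper leaves implicit.
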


\begin{proof}
    From Remark~\ref{rem:initial-forms-identity}, if $a = b^\gamma$, then
    \[
        \In_a f(x) = \In_{1^\gamma} f(bx) = 1^{\gamma_0} \In_{\gamma} f(bx).
    \]
    Next, from Definition~\ref{def:nonsplit-in-mult}, we defined
    \[
        \mult^B_{\lc^\bullet(a)} (\In_a f) = \mult^{B[\Gamma]}_1 \left(1^{-\gamma} \In_{1^\gamma} f(bx) \right) = \mult^{B[\Gamma]}_1 \left(\In_{\gamma} f(bx) \right).
    \]
    We want to check that computing this multiplicity in $B[\Gamma][x]$ rather than in $B[x]$ makes no difference.

    First, since $B$ embeds in $B[\Gamma]$, we have an inequality
    \[
        \mult^{B}_1 \left(\In_{\gamma} f(bx) \right) \le \mult^{B[\Gamma]}_1 \left(\In_{\gamma} f(bx) \right)
    \]
    by Proposition~\ref{prop:mult-morph}.

    Second, suppose we have some factorization $\In_{\gamma} f(bx) \leqslant (x - 1)g(x)$ in $B[\Gamma][x]$. And now remember that by definition of $N_{B[\Gamma]}$ (\ref{def:split-extension}), a relation holds if and only if it holds among just the terms of smallest valuation. I.e.\  if we let $\tilde g(x)$ be obtained from $g(x)$ by throwing out any higher order terms, then we have the relation $\In_{\gamma} f(bx) \preccurlyeq (x - 1)\tilde g(x)$ in $B[x]$. Therefore, by induction, we have
    \[
        \mult^{B}_1 \left(\In_{\gamma} f(bx) \right) = \mult^{B[\Gamma]}_1 \left(\In_{\gamma} f(bx) \right).
    \]

    We finish by observing that
    \[
        \mult^{B}_1 (\In_{\gamma} f(bx) ) = \mult^{B}_{b} (\In_{\gamma} f(x) ) = \mult^{B}_{\lc^\bullet(a)} (\In_{\gamma} f ). \qedhere
    \]
\end{proof}

\section{Hyperfields} \label{sec:hyperfields}
We defined hyperfields as idylls in Definition~\ref{def:hyperfield-idyll}. Or more specifically, we defined idylls of \emph{quotient} hyperfields. In this section, we make use of the language of pastures from the previous section to describe hyperfields in more detail. Then we will explain how our definition of tropical extension generalizes the semidirect product of Bowler and Su \cite{BS}.

\begin{definition} \label{def:hyperfield}
    A hyperfield is a pasture $H$, such that the \emph{hypersum} $a \boxplus b \coloneqq \{c : c \leqslant a + b\}$ is always nonempty and the operation $\boxplus$ is associative:
    \[
        (a \boxplus b) \boxplus c = \bigcup_{t \in a \boxplus b} t \boxplus c = \bigcup_{t \in b \boxplus c} a \boxplus t = a \boxplus (b \boxplus c).
    \]
    Here we are using the monadic laws discussed in (Remark~\ref{rem:hyperfield-not-nec-quots}).
\end{definition}

\begin{example}
    The tropical hyperfield is the hyperfield on $\R \cup \{\infty\}$ where $a \in b \boxplus c$ if the minimum of $a, b, c$ occurs at least twice. The tropical idyll is the idyllic part of this pasture.
\end{example}

\begin{example}
    The sign hyperfield is the hyperfield on $\S^\bullet = \{0, 1, -1\}$ and where addition is defined by
    \[
        \begin{array}{r|rcc}
            \boxplus & 0 & 1 & -1 \\\hline
            0 & 0 & 1 & -1 \\
            1 & 1 & 1 & \S^\bullet \\
            -1 & -1 & \S^\bullet & -1
        \end{array}
    \]
    The sign idyll is the idyllic part of this pasture.
\end{example}

\begin{definition}
    A hypergroup $H$ is a set $H$ together with a distinguished element $0$ and hypersum operation $\boxplus$ from $H \times H$ to the powerset of $H$ such that for all $x, y, z \in H$:
    \begin{itemize}
        \item $\boxplus$ is commutative and associative,
        \item $0 \boxplus x = \{x\}$,
        \item there exists a unique element $-x$ such that $0 \in x \boxplus (-x)$,
        \item $x \in y \boxplus z$ if and only if $-y \in (-x) \boxplus z$.
    \end{itemize}
\end{definition}

\begin{remark}
    Another definition (the standard one) of a hyperfield is that it is a hypergroup with a multiplication which distributes over hypersums and which has multiplicative inverses. I.e.\ hyperfields are monoids in the category of hypergroups.
\end{remark}

We now describe Bowler and Su's semidirect in a slightly-modified language. Because we work in a commutative setting, we can simplify some conditions required by non-commutativity.

\begin{definition}
    Let $B$ be a hyperfield, let $H = (H, 1, \cdot)$ be an Abelian group written multiplicatively, and let $\Gamma$ be an OAG. Suppose we have an exact sequence
    \begin{equation} \label{eq:BS-SES}
        1 \to B^\times \xrightarrow{\iota} H \xrightarrow{v} \Gamma \to 1,
    \end{equation}
    and we will assume that $\iota$ is the identity.

    Define $H^\bullet = H \cup \{0\}$ to be the monoid obtained by formally adding an absorbing element $0$ to $H$. Next, for each $\gamma \in \Gamma$, let $B^\gamma = v^{-1}(\gamma)$ as in Definition~\ref{def:torsors}.
 
    If $x, y \in B^\gamma \cup \{0\}$ and $c \in B^\gamma$, we can define $x \boxplus_\gamma y = \{z \in B^\gamma \cup \{0\} : (c^{-1}z) \in (c^{-1}x) \boxplus (c^{-1}y) \text{ in } B\}$. This hypersum is independent of $c$ because if $c_1, c_2 \in B^\gamma$ then multiplication by $c_1c_2^{-1}$ is an automorphism of $B$. This defines a hypersum on $B_\gamma \coloneqq B^\gamma \cup \{0\}$ and makes $B_\gamma$ into a hypergroup.

    The \emph{$\Gamma$-layering} $B \rtimes_{H, v} \Gamma$ of $B$ along this short exact sequence is a hyperfield whose underlying monoid is $H^\bullet$ and where $y \boxplus z$ is
    \begin{enumerate}[label=(H\arabic*)]
        \item \label{BS:y} $\{y\}$ if $v(y) < v(z)$
        \item \label{BS:z} $\{z\}$ if $v(z) < v(y)$
        \item \label{BS:equal} $y \boxplus_\gamma z$ if $v(y) = v(z) \eqqcolon \gamma$ and $0 \notin y \boxplus_\gamma z$
        \item \label{BS:x} $y \boxplus_\gamma z \cup \{x : v(x) > \gamma\}$ if $0 \in y \boxplus_\gamma z$
    \end{enumerate}
\end{definition}

\begin{proposition} \label{prop:BS-equiv}
    The Bowler-Su semidirect product $C \coloneqq B \rtimes_{H, v} \Gamma$ is a tropical extension in the sense of Definition~\ref{def:tropext}.
\end{proposition}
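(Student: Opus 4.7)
The plan is to verify each requirement of Definition~\ref{def:tropext} for $C = B \rtimes_{H, v} \Gamma$, viewed as an idyll via its idyllic part, so that $N_C = \{\sum c_i : 0 \in \boxplus_i c_i\}$, where the iterated hypersum is computed via the monadic laws of Remark~\ref{rem:hyperfield-not-nec-quots}. The short exact sequence $1 \to B^\times \to C^\times \to \Gamma \to 1$ is built into the construction, and the remaining idyll axioms ($0 \neq 1$, $C^\times$ a group, and $\epsilon_C = -1$ satisfying $\epsilon_C^2 = 1$ and $1+(-1) \in N_C$) transfer directly from the hyperfield structure. The ideal property of $N_C$ itself follows from distributivity of multiplication over hypersums (closure under multiplication by $C^\bullet$) and associativity of iterated hypersums (closure under addition by elements of $N_C$).

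For exactness at the level of ordered blueprints, I would check $\im(\iota) = \eq(v, 1)$: the underlying monoid of $\eq(v, 1)$ is $\{x \in C^\bullet : v^\bullet(x) = 1^\bullet_\Gamma\} \cup \{0_C\} = B^0 \cup \{0_C\}$, which coincides with $\iota(B^\bullet)$. Fullness of the embedding (that a formal sum of elements of $B^\bullet$ lies in $N_C$ iff it lies in $N_B$) will be an immediate consequence of the minimal-valuation property, since all nonzero summands share valuation $0_\Gamma$ and rules~\ref{BS:equal}/\ref{BS:x} reduce the relation in $C$ to a hypersum in $B_0 \cong B$.

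The substantive step is the minimal-valuation property: for every formal sum $\sum c_i$ in $C$, letting $I$ be the set of minimum-valuation indices, $\sum c_i \in N_C$ iff $\sum_I c_i \in N_C$. I would prove the stronger statement $\boxplus_i c_i = \boxplus_I c_i$ by induction on the number of summands $c_j$ with $v(c_j) > \gamma$ (where $\gamma = \min_i v(c_i)$), which reduces to the single claim $(\boxplus_I c_i) \boxplus c_j = \boxplus_I c_i$ whenever $v(c_j) > \gamma$. The argument splits on whether $0 \in \boxplus_I c_i$: if not, then $\boxplus_I c_i \subseteq B^\gamma$ and rule~\ref{BS:y} forces the additional $c_j$ to be absorbed elementwise; if so, then rule~\ref{BS:x} already places every element of $C^\bullet$ of valuation $>\gamma$ inside $\boxplus_I c_i$. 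In the latter case, one inclusion is a direct case-check against (H1)--(H4), and the reverse inclusion uses hypergroup reversibility to realize each $y \in \boxplus_I c_i$ as an element of $x \boxplus c_j$ for some $x \in y \boxplus (-c_j)$ that again lies in $\boxplus_I c_i$.

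The main obstacle is this last step, particularly the reverse inclusion when $0 \in \boxplus_I c_i$: one must carefully track which $(y, c_j)$-pairs produce which valuation strata, handling separately the cases $v(y) = \gamma$, $\gamma < v(y) < v(c_j)$, $v(y) = v(c_j)$, and $v(y) > v(c_j)$, and each time exhibit an explicit preimage $x$ using that $\boxplus_I c_i$ is closed under adjoining elements of valuation strictly greater than $\gamma$.
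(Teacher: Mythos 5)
Your proposal is correct, and its skeleton matches the paper's proof: verify that the underlying exact sequence is the one required, identify $\im(\iota)$ with $\eq(v,1)$ by computing $\boxplus_0$ with the representative $c = 1_C$ (this, rather than the minimal-valuation property itself, is what really gives fullness, so your phrase ``immediate consequence of the minimal-valuation property'' slightly misattributes it, though you also state the correct reason via (H3)/(H4)), and check the minimal-valuation condition by cases against the rules (H1)--(H4). Where you genuinely diverge is in how much of that last condition you verify. The paper only checks it for binary hypersums, i.e.\ for three-term relations $x \in y \boxplus z$, leaving implicit the passage to longer sums (which can be justified by associativity of the monadic hypersum, or by fissure for pastures). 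You instead prove the stronger set identity $\bigboxplus_i c_i = \bigboxplus_I c_i$ for sums of arbitrary length, by induction on the number of summands of non-minimal valuation; this obliges you to supply two facts the paper never states: that $0 \in \bigboxplus_I c_i$ forces every element of valuation strictly greater than $\gamma$ into $\bigboxplus_I c_i$ (because $0$ can only arise from a partial sum of the form $(-c) \boxplus c$ with $v(c) = \gamma$, to which (H4) applies --- this uses uniqueness of additive inverses in the hypergroup), and the reverse inclusion in $\left(\bigboxplus_I c_i\right) \boxplus c_j = \bigboxplus_I c_i$, which your reversibility argument (choose $x \in y \boxplus (-c_j)$, note $v(x) > \gamma$, conclude $y \in x \boxplus c_j$) handles correctly. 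What your route buys is a self-contained verification of the null-ideal condition for relations of every length, exactly as Definition~\ref{def:tropext} demands; what the paper's route buys is brevity, at the cost of asking the reader to reduce long relations to the three-term generators.
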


\begin{proof}
    Since the underlying monoid of $C$ is $H^\bullet$, the short exact sequence in equation \eqref{eq:BS-SES} is the same as
    \[
        1 \to B^\times \xrightarrow{\iota} C^\times \xrightarrow{v} \Gamma \to 1    
    \]
    in Definition~\ref{def:tropext}.

    Next, let us check that $\im(\iota) = \eq(v, 1)$, i.e.\ that $B$ is a full subblueprint of $C$. By construction, we have $B^\bullet = \eq(v, 1)^\bullet$ as monoids. We need to check relations. If $x, y, z \in B^\bullet$, then we can take $c = 1_C$ in the definition of $\boxplus_0$ to see that $x \in y \boxplus z$ in $B$ if and only if $x \in y \boxplus_0 z$ in $C$ if and only if $x \in y \boxplus z$ in $C$ (compare \ref{BS:equal}). Therefore $\im(\iota)$ is a full subblueprint and hence equal to $\eq(v, 1)$.

    Finally, we need to check that $x \in y \boxplus z$ if and only if this holds when looking at just the terms of minimal valuation.
    \begin{itemize}
        \item If $v(x) = v(y) < v(z)$ then $x \in y \boxplus z$ if and only if $x \in y \boxplus 0$ by \ref{BS:y}.
        \item If $v(x) = v(z) < v(y)$ then $x \in y \boxplus z$ if and only if $x \in 0 \boxplus z$ by \ref{BS:z}.
        \item If $v(y) = v(z) < v(x)$ then $x \in y \boxplus z$ if and only if $0 \in y \boxplus z$ by \ref{BS:x}.
        \item If the minimum valuation does not occur at least twice, then vacuously there are no $x, y, z$ such that $x \in y \boxplus z$ and neither do we have any the relations $0 \in y \boxplus 0, 0 \in 0 \boxplus z$ or $x \in 0 \boxplus 0$.
    \end{itemize}
    So we conclude that $C$ is a tropical extension.
\end{proof}

\section{Lifting Theorem for Multiplicities}
We have defined tropical extensions, initial forms and multiplicities and seen that our definitions agree with each other. Now we are ready to prove the main theorem, and we will do over the course of this section. First, let us recall the definition of \emph{wholeness} from the introduction.

\begin{definition}
    An idyll $B$ is \emph{whole} if for every pair of elements $a, b \in B^\bullet$, there exists at least one element $c$ such that $c \preccurlyeq a + b$.
\end{definition}

Recall that in language of hyperfields or partial fields, we have a notion of sum sets: $a \boxplus b = \{c : c \preccurlyeq a + b\}$ (Example~\ref{ex:sumset}). A pasture is whole if every sum is non-empty. So hyperfields and fields are always whole but partial fields are only whole if they are fields. Whole idylls are closely related therefore to hyperfields.

\begin{remark}
    If $B$ is whole, then any tropical extension by $B$ is also whole. We have two cases. First, if $v^\bullet(a) = v^\bullet(b)$, then both $a$ and $b$ live in some torsor $B^\gamma$. Now take $c \in B^\gamma$ and consider $c^{-1}a, c^{-1}b \in B^0 = B^\times$. Since $B$ is whole, we can find an element $c'$ such that $c' \preccurlyeq c^{-1}a + c^{-1}b$ and then multiply both sides by $c$ to get $cc' \preccurlyeq a + b$.

    Otherwise, if $v^\bullet(a) < v^\bullet(b)$, say, then $a \preccurlyeq a + b$ because this relation is true among the minimum valuation terms.
\end{remark}

This brings us to the main theorem. Let us recall.

\mainthm*

Theorem~\ref{thm:split}, which describes the split case, is a direct corollary of this theorem in light of Proposition~\ref{prop:equivalent}.

\begin{lemma} \label{lem:ev}
    Let $C \in \Ext^1(\Gamma, B)$ and define the idyllic subblueprint $\OC$ of $C$ to be the induced subblueprint corresponding to the submonoid $\{c \in C^\bullet : v^\bullet(c) \ge 0\}$. Let $\ev_0 : \OC \to B$ be the map which ``evaluates $t$ at $0$'' meaning
    \[
        \ev_0^\bullet(c) =
        \begin{cases}
            c & \text{if } c \in B^0, \\
            0 & \text{if } c \in B^\gamma, \gamma > 0.
        \end{cases}
    \]
    Then $\ev_0$ is a morphism.
\end{lemma}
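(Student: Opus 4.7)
My plan is to verify the two conditions of Definition~\ref{def:morphism}: first that $\ev_0^\bullet$ is a monoid homomorphism on the underlying monoids, and second that $\ev_0$ preserves the subaddition. Since both $\OC$ and $B$ are idyllic, the subaddition on each is generated by relations of the form $0 \leqslant \sum x_i$ together with reflexivity, so preservation reduces to showing $\ev_0(N_{\OC}) \subseteq N_B$.

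For the monoid part, I would argue by cases on valuations. Since $v^\bullet$ is a monoid homomorphism, valuations add under multiplication, so $\OC^\bullet$ is closed under multiplication. If $x, y \in \OC^\bullet$ both have valuation $0$, they lie in $B^0 \cong B^\times$, so $xy \in B^\times$ as well, and $\ev_0^\bullet(xy) = xy = \ev_0^\bullet(x) \ev_0^\bullet(y)$. If at least one of $v^\bullet(x), v^\bullet(y)$ is strictly positive (or one of them is $0_C$), then $v^\bullet(xy) > 0$ (resp.\ $xy = 0_C$), so $\ev_0^\bullet(xy) = 0_B$, which also equals $\ev_0^\bullet(x) \ev_0^\bullet(y)$ since at least one of the latter factors is $0_B$. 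Preservation of $0$ and $1$ is by construction.

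For preservation of the null-ideal, suppose $\sum x_i \in N_{\OC}$, which by definition of a full subblueprint is the same as $\sum x_i \in N_C$ subject to $v^\bullet(x_i) \ge 0$ for every $i$. By the defining property of a tropical extension, this is equivalent to $\sum_I x_i \in N_C$ where $I = \{i : v^\bullet(x_i) \text{ is minimal}\}$. Now I split on the minimum value $\gamma_0 = \min_i v^\bullet(x_i)$. If $\gamma_0 > 0$, then every $\ev_0^\bullet(x_i) = 0_B$, so $\sum \ev_0^\bullet(x_i) = 0$ in $\N[B^\bullet]$ (after collapsing the $0 \equiv 0 + 0$ relations), and $0 \in N_B$ since $0 \leqslant 0$ follows from the empty-sum axiom. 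If $\gamma_0 = 0$, then for $i \in I$ the element $x_i$ lies in $B^0$, which we identify with $B^\times \subseteq B^\bullet$, and for $i \notin I$ we have $\ev_0^\bullet(x_i) = 0_B$. Thus $\sum \ev_0^\bullet(x_i) = \sum_I x_i$ as formal sums in $\N[B^\bullet]$, and it remains to transfer the relation $\sum_I x_i \in N_C$ back to $N_B$. This is exactly what the hypothesis $\im(\iota) = \eq(v,1)$ gives us: since $\im(\iota)$ is a full subblueprint of $C$ and every $x_i$ for $i \in I$ lies in it, the relation holds in $B$ as well.

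The only step requiring real care is the last one; everything else is essentially bookkeeping. The subtlety is that one must invoke the fullness of the embedding $B \hookrightarrow C$ (not just that $B^\bullet$ sits inside $C^\bullet$ as a submonoid) in order to pull the relation $\sum_I x_i \in N_C$ back to $\sum_I x_i \in N_B$. Without fullness, the tropical-extension property would get us to relations among the minimum-valuation terms in $C$ but not necessarily in $B$, and the argument would collapse.
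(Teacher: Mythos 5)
Your proposal is correct and follows essentially the same route as the paper: split on whether the minimal valuation in the sum is positive or zero, use the tropical-extension property to reduce to the minimal-valuation terms, and use the fullness of $\im(\iota) = \eq(v,1)$ to pull the resulting relation in $B^0 = B^\times$ back to $N_B$. The paper leaves the monoid-homomorphism check and the fullness invocation implicit ("simple case checking"), which you spell out, but the substance is identical.
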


The language of ``evaluating $t$ at $0$'' comes from the split case wherein $\ev_0^\bullet(bt^\gamma) = b0^\gamma$ with the usual convention that $0^0 = 1$.

\begin{proof}
    Simple case checking shows that $\ev_0 : \OC^\bullet \to B^\bullet$ is a morphism. It is left then to check that $\ev_0(N_{\OC}) \subseteq N_B$.

    Given $\sum c_i \in N_{\OC}$, there are two cases. First, if every $c_i$ has a positive valuation, then $\ev_0(\sum c_i) = 0_B \in N_B$. Second, suppose that $I = \{i : v^\bullet(c_i) = 0\}$ is non-empty. Then by definition of $N_C$, we must have $\sum_I c_i \in N_{\OC} \subset N_C$ since $0$ is the minimum valuation. But $\sum_I c_i$ also lives in $B^0 = B^\times$ so we get $\sum_I c_i = \ev_0(\sum c_i) \in N_B$.
\end{proof}

\begin{lemma} \label{lem:main-first-ineq}
    $\mult^C_a(f) \le \mult^{B}_{\lc^\bullet(a)} (\In_a(f))$.
\end{lemma}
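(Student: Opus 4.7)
The plan is to induct on $m \coloneqq \mult^C_a(f)$, using the evaluation morphism $\ev_0 \colon \OC \to B$ of Lemma~\ref{lem:ev} to push factorizations down to $B[x]$. Set $\gamma_1 \coloneqq v^\bullet(a)$, $\gamma_0 \coloneqq \min_i\{v^\bullet(c_i) + i\gamma_1\}$, and choose $c \in B^{\gamma_0}$. I introduce the normalized polynomial $\hat f(x) \coloneqq c^{-1} f(a x)$; the coefficient of $x^i$ in $\hat f$ is $c^{-1} c_i a^i$, whose valuation $v^\bullet(c_i) + i\gamma_1 - \gamma_0$ is $\ge 0$, with equality precisely on the support $I$ of $\In_a f$. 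Hence $\hat f \in \OC[x]$ and $\ev_0(\hat f) = c^{-1} \In_a f$ in $B[x]$, under the identification $B^{\gamma_0} \cong B^\times$ used in Definition~\ref{def:nonsplit-in-mult}.

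The base case $m = 0$ is immediate. For the inductive step, pick any factorization $f \preccurlyeq (x-a)g$ in $C[x]$ with $\mult^C_a(g) = m-1$ and write $g = \sum d_j x^j$. Multiplying the coefficient-wise relations $c_i \preccurlyeq d_{i-1} - a d_i$ by $c^{-1} a^i$ converts them into $\hat f \preccurlyeq (x-1)\hat g$ in $C[x]$, where $\hat g_j = c^{-1} a^{j+1} d_j$. The main technical obstacle is verifying that $\hat g$ also lies in $\OC[x]$, i.e., that $v^\bullet(d_j) + (j+1)\gamma_1 \ge \gamma_0$ for every $j$. I would prove this by downward induction on $j$: the top relation $c_n \preccurlyeq d_{n-1}$ (where $n = \deg f$) forces $d_{n-1} = c_n$ and settles the base case; for the inductive step, the null relation $-c_{j+1} + d_j - a d_{j+1} \in N_C$ together with the ultrametric property of $v$ forces $v^\bullet(d_j) \ge \min(v^\bullet(c_{j+1}),\, \gamma_1 + v^\bullet(d_{j+1}))$, and both of these candidates, shifted by $(j+1)\gamma_1$, are $\ge \gamma_0$ by the definition of $\gamma_0$ and the inductive hypothesis respectively. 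This Newton-polygon lower bound on the factor $g$ is the heart of the proof.

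With $\hat g \in \OC[x]$ in hand, applying the morphism $\ev_0$ to the relation in $\OC[x]$ produces the factorization $c^{-1} \In_a f \preccurlyeq (x-1) \ev_0(\hat g)$ in $B[x]$. A direct computation, parallel to the one for $\hat f$, identifies $\ev_0(\hat g) = c'^{-1} \In_a g$ with $c' \coloneqq c a^{-1} \in B^{\gamma_0 - \gamma_1}$. Re-embedding this relation into $C[x]$ via $B \hookrightarrow C$ and invoking Definition~\ref{def:nonsplit-in-mult} together with the inductive hypothesis yields
\[
    \mult^B_{\lc^\bullet(a)}(\In_a f) = \mult^C_1(c^{-1} \In_a f) \ge 1 + \mult^C_1(c'^{-1} \In_a g) = 1 + \mult^B_{\lc^\bullet(a)}(\In_a g) \ge 1 + (m-1) = m,
\]
which closes the induction.
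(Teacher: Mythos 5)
Your proposal is correct and follows essentially the same route as the paper: normalize to $\hat f = c^{-1}f(ax) \in \OC[x]$ and push factorizations through the morphism $\ev_0$ of Lemma~\ref{lem:ev}, using $\ev_0(\hat f) = c^{-1}\In_a f$ and Definition~\ref{def:nonsplit-in-mult}. The paper compresses the step you labor over into a single appeal to Proposition~\ref{prop:mult-morph}, which strictly speaking controls factorizations over $\OC[x]$ rather than over $C[x]$; your downward induction showing that any quotient in a factorization over $C[x]$ automatically lies in $\OC[x]$ (the bound $v^\bullet(d_j) + (j+1)\gamma_1 \ge \gamma_0$) is precisely the detail that reconciles the two, so your write-up is, if anything, more explicit on this point. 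Two minor remarks: in a general idyll the top relation only forces $v^\bullet(d_{n-1}) = v^\bullet(c_n)$ (which is all you actually use), not $d_{n-1} = c_n$; and the identification $\ev_0(\hat g) = c'^{-1}\In_a g$ tacitly uses that $\min_j\{v^\bullet(d_j) + j\gamma_1\}$ is attained and equals $\gamma_0 - \gamma_1$ --- if it were strictly larger, $\ev_0(\hat g)$ would vanish and the pushed-down relation would force a unit of $B$ into $N_B$, a contradiction worth stating in one line.
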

\begin{proof}
    Recall that the initial form of a polynomial $f = \sum c_ix^i \in C[x]$ is defined as
    \[
        \In_a(f) = \sum_{I} \lc^\bullet(c_i)(ax)^i \in B^{\gamma_0}[x]
    \]
    where $I = \{i : v^\bullet(c_ia^i) \text{ is minimal}\}$ and $\gamma_0$ is that minimum value. In other words, this initial form is obtained from the polynomial $g(x) = f(ax)$ by restricting the sum to $I$. Observe that by Lemma~\ref{lem:monomial-trans-mult}, we have $\mult^C_1 g = \mult^C_a f$.

    Next, choose any $c \in B^{\gamma_0}$. By Proposition~\ref{prop:mult-morph}, and the fact that multiplication by $c$ is invertible, we have $\mult^C_1 c^{-1} g = \mult^C_1 g$, independent of the choice of $c$.

    Now observe that $c^{-1} g \in \OC[x]$ and $\ev_0(c^{-1} g) = c^{-1} \In_a(f)$. So because $\ev_0$ is a morphism (Lemma~\ref{lem:ev}), we must have
    \[
        \mult^C_1 c^{-1} g  \le \mult_1^B c^{-1} \In_a(f).
    \]
    By what we have said, the left side of this inequality is $\mult^C_a(f)$ and the right side is $\mult^{B}_{\lc^\bullet(a)} (\In_a(f))$.
\end{proof}

\begin{lemma} \label{lem:assume-gamma-is-R}
    In proving Theorem~\ref{thm:main}, we may assume that $\Gamma = \R$.
\end{lemma}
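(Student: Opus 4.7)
The plan is to perform two successive reductions: first to the case where $\Gamma$ is finitely generated, then to the rank-one case via induction on rank and the embedding of a finitely generated ordered abelian group into $(\R^n, \lex)$.

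For the first reduction, note that $f$ has only finitely many coefficients $c_0, \dots, c_n$, and $a$ is a single element. Let $\Gamma_0$ be the subgroup of $\Gamma$ generated by the finite set $\{v^\bullet(c_i)\} \cup \{v^\bullet(a)\}$, and let $C_0 \subseteq C$ be the full sub-idyll on the monoid $v^{-1}(\Gamma_0) \cup \{0_C\}$; one checks directly from Definition~\ref{def:tropext} that $C_0 \in \Ext^1(\Gamma_0, B)$. The polynomial $f$, the root $a$, and the initial form $\In_a f$ all live in $C_0$ (respectively in $B^{\gamma_0}[x]$) regardless of the ambient $C$. The inclusion $C_0 \hookrightarrow C$ combined with Proposition~\ref{prop:mult-morph} gives $\mult^{C_0}_a(f) \le \mult^C_a(f)$, and Lemma~\ref{lem:main-first-ineq} gives $\mult^C_a(f) \le \mult^B_{\lc^\bullet(a)}(\In_a f)$. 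If Theorem~\ref{thm:main} is already known for $C_0$, the outer two quantities coincide and equality is forced throughout. Hence it suffices to prove the theorem when $\Gamma$ is finitely generated.

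For the second reduction, use the classical fact that any finitely generated totally ordered abelian group embeds as an ordered subgroup of $(\R^n, \lex)$ for some $n \ge 0$. Induct on $n$: for $n = 0$ we have $C = B$, and for $n = 1$ we have $\Gamma \subseteq \R$, which is the assumed case. For $n \ge 2$, compose $v \colon C \to \Gamma^\idyll$ with the projection to the first coordinate of $\R^n$ to obtain a rank-one valuation $v_1 \colon C \to \R^\idyll$, and let $C'$ be the full sub-idyll of $C$ on $v_1^{-1}(0) \cup \{0_C\}$. Then $C' \in \Ext^1(\Gamma', B)$ with $\Gamma' = \Gamma \cap (\{0\} \oplus \R^{n-1})$ of rank at most $n - 1$, and $C$ is itself a rank-one tropical extension of $v_1(C) \subseteq \R$ by $C'$. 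By the remark preceding Theorem~\ref{thm:main}, $C'$ is whole since $B$ is. Apply the $\R$-case of Theorem~\ref{thm:main} to the outer rank-one extension, and then invoke the inductive hypothesis on $C'$ to relate the result to $\mult^B_{\lc^\bullet(a)}(\In_a f)$.

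The main bookkeeping task is a non-split analogue of Lemma~\ref{lem:inductive-in}: namely, the iterated initial form obtained by first applying $\In^{(1)}_a$ with respect to $v_1$ and then applying the initial form over $C'$ agrees with the direct initial form $\In_a f$ in $B^{\gamma_0}[x]$, under the natural torsor identifications. This comes down to unwinding the lex order, exactly as in the split case: a coefficient $c_i$ survives the first initial form precisely when the first coordinate of $v^\bullet(c_i) + i\, v^\bullet(a)$ is minimal, and survives the second initial form precisely when the remaining coordinates are lex-minimal among those; together this selects exactly the index set defining $\In_a f$. Because $\lc^\bullet$ in the non-split case is literally the identity on the underlying monoid (Definition~\ref{def:torsors}) and $\mult^B_{\lc^\bullet(a)}(\In_a f)$ is intrinsically defined via the torsor-to-$B^\times$ identification (Definition~\ref{def:nonsplit-in-mult}), the leading coefficient bookkeeping matches up without ambiguity. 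This matching is the only subtle point, and it is the step that one needs to take care with when translating the argument from the split case.
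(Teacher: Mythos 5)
Your overall strategy matches the paper's: reduce first to a finitely generated (hence finite-rank) value group, then induct on rank via an iterated-initial-form argument in the spirit of Lemma~\ref{lem:inductive-in}. The interesting divergence is in how you carry out the first reduction. The paper takes $\Gamma'$ to be the subgroup generated by the valuations of the coefficients of $f$ \emph{and of every quotient $g_0, g_1, \dots$ in a maximal factorization chain}, so that the entire chain already lives inside $C' = \bigcup_{\gamma\in\Gamma'} B^\gamma$; this yields the outright equality $\mult^C_a f = \mult^{C'}_a f$. You instead take $\Gamma_0$ generated only by $\{v^\bullet(c_i)\}\cup\{v^\bullet(a)\}$, observe that $\In_a f$ and its multiplicity are unchanged, and then sandwich: $\mult^{C_0}_a(f) \le \mult^C_a(f) \le \mult^B_{\lc^\bullet(a)}(\In_a f)$ by Proposition~\ref{prop:mult-morph} and Lemma~\ref{lem:main-first-ineq}, so if the finite-rank case holds the outer terms agree and equality propagates. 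Your version is leaner (no need to track the quotient coefficients or even to assume a maximal chain exists) at the cost of leaning on the one-sided inequality Lemma~\ref{lem:main-first-ineq}, which is indeed proved for arbitrary $\Gamma$, so there is no circularity. The paper's version is more self-contained in that it makes the multiplicity equality between $C$ and the sub-extension explicit before invoking any part of the main theorem. Your second reduction, including the checks that $C'$ is again a tropical extension, that it is whole because $B$ is, and the sketch of the non-split analogue of Lemma~\ref{lem:inductive-in} tracking which indices survive the nested initial forms under the lex order, fills in details the paper only gestures at; that bookkeeping is essentially correct and is the right thing to flag as the delicate point. Both proofs are valid; yours trades a bit of self-containment for a shorter and somewhat more robust first step.
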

\begin{proof}
    We would like to appeal to Lemma~\ref{lem:inductive-in} and induction, but in order to do so, we need a finite-rank hypothesis. We can get this by considering the subgroup generated by the coefficients.

    Specifically, let $f \preccurlyeq (x - a)g_0$ and $g_k \preccurlyeq (x - a)g_{k + 1}$ be a sequence of factorizations of maximal length. Let $\Gamma'$ be the subgroup generated by the coefficients of $f, g_0, g_1, \dots$. If we define $C' = \bigcup_{\gamma \in \Gamma'} B^\gamma$, then $\mult^C_a f = \mult^{C'}_a f$ and $\operatorname{rank} \Gamma' < \infty$.
\end{proof}

We now have everything in hand to prove the lifting theorem.

\liftingthm*

% Now we give a proof of Theorem~\ref{thm:lifting}.

% \begin{theorem}
%     Given a factorization $\In_a f \leqslant (x - 1) g$, there exists a lifted factorization $f \leqslant (x - a) \tilde{g}$ such that $g = \In_a \tilde{g}$.
% \end{theorem}
\begin{proof}
    First, by making monomial substitutions $x \mapsto ax$ or $x \mapsto a^{-1}x$ in the appropriate places, we are going to assume that $a = 1$. Also, by multiplying by $c$ or $c^{-1}$ for some $c \in B^{\gamma_0}$, we are going to assume that the minimal valuation of the terms in $f$ or $\In_1 f$ is exactly $0$. As a consequence, we now have the identity $\In_1 f = \ev_0 f \in B[x]$ and by the last half of the proof of Proposition~\ref{prop:equivalent} regarding factorization in $B$ versus in $B[\Gamma]$, there is no loss of generality treating this as a polynomial over $B$ rather than over $C$.

    From Lemma~\ref{lem:assume-gamma-is-R}, we can assume that $\Gamma = \R$ and this will allow us to consider the Newton polygon as defined in Section~\ref{sec:newt-poly}. We will break up the polynomial into three sections. Let $i_0 = \min\{i : v^\bullet(c_i) = 0\}$ and $i_1 = \max\{i : v^\bullet(c_i) = 0\}$. Let $I_L = \{i : i < i_0\}$ be the \emph{left} interval, let $I_M = \{i : i_0 \le i \le i_1\}$ be the \emph{middle} interval, let $I_R = \{i : i > i_1\}$ be the \emph{right} interval, and as always, we define $I = \{i : v^\bullet(c_i) = 0\}$. See Figure~\ref{fig:construction} for a visual.

    So suppose we have $\In_1 f \preccurlyeq (x - 1) g$ where $f = \sum c_ix^i$ and $g = \sum d_i x^i$. In what follows, we will treat the coefficients as infinite sequences by defining the terms not appearing in the sum to be $0$. Then, we will modify the coefficients of $g$ by redefining them in such a way that if $\tilde{g}$ is obtained from $g$ by redefining some $d_i$'s then $\In_1 \tilde{g} = \ev_0 \tilde{g} = g$. In particular, if $d_i$ is non-zero then we do not touch it and if $d_i = 0$ then it might be redefined to another element of positive valuation.

    \begin{claim} \label{claim:support}
        The support of $g$ is contained in $i_0, \dots, i_1 - 1$ and $d_{i_0} \neq 0$ and $d_{i_1 - 1} \neq 0$.
    \end{claim}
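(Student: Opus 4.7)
The plan is to unpack the factorization $\In_1 f \preccurlyeq (x-1)g$ as a system of coefficient-wise relations, then exploit the properness of $N_B$ to kill off coefficients of $g$ outside $\{i_0,\dots,i_1-1\}$. Writing $f = \sum c_i x^i$ and $g = \sum d_i x^i$ and extending $\bar c_i = c_i$ for $i \in I$, $\bar c_i = 0$ otherwise (and $d_i = 0$ outside the support of $g$), the relation $\In_1 f \preccurlyeq (x-1)g$ unfolds coefficient-by-coefficient to
\[
    \bar c_i + d_i \preccurlyeq d_{i-1}\quad \text{for every } i \in \Z.
\]

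The key tool I would invoke repeatedly is the following consequence of properness: if a single element $a \in B^\bullet$ satisfies $a \in N_B$, then $a = 0$. Indeed, if $a \in B^\times$ then $1 = a^{-1}\cdot a \in N_B$ (since $N_B$ is an ideal), contradicting $1 \notin N_B$. Note too that $-a \in N_B$ if and only if $a \in N_B$, since $N_B$ is closed under multiplication by $-1 \in B^\bullet$.

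From here I would proceed in three steps. \textbf{(Right end.)} Let $D$ be the degree of $g$. If $D \ge i_1$, then $\bar c_{D+1} = 0$ and $d_{D+1} = 0$, so the relation at $i = D+1$ collapses to $0 \preccurlyeq d_D$, i.e.\ $d_D \in N_B$, forcing $d_D = 0$ and contradicting maximality of $D$. Hence $D \le i_1 - 1$, which gives $d_i = 0$ for all $i \ge i_1$. \textbf{(Left end.)} Starting from $d_{-1} = 0$, I proceed by forward induction: for each $i < i_0$, we have $\bar c_i = 0$ and $d_{i-1} = 0$, so the relation becomes $d_i \preccurlyeq 0$, i.e.\ $-d_i \in N_B$, hence $d_i = 0$. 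Thus $d_i = 0$ for $i < i_0$, and the support of $g$ is contained in $\{i_0,\dots,i_1-1\}$.

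\textbf{(Nonvanishing at the endpoints.)} At $i = i_0$, the relation reads $c_{i_0} + d_{i_0} \preccurlyeq d_{i_0 - 1} = 0$; if $d_{i_0} = 0$ this would force $c_{i_0} \in N_B$, hence $c_{i_0} = 0$, contradicting $i_0 \in I$. Similarly, at $i = i_1$ we have $d_{i_1} = 0$ so the relation becomes $c_{i_1} \preccurlyeq d_{i_1 - 1}$, and $d_{i_1-1} = 0$ would force $c_{i_1} = 0$, again contradicting $i_1 \in I$. The argument is mostly bookkeeping; the substantive ingredient is the one-term properness lemma, and the only thing to watch is the sign manipulation, which is harmless since $N_B$ is stable under multiplication by $-1$.
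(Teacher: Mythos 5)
Your proof is correct, and it is essentially the same argument as the paper's, just spelled out at the level of individual coefficient relations rather than invoked at a higher level. The paper cites ``no zero-divisors in an idyll'' to pin down $\deg g$ and a ``divide by $x^{\min\{k,d_0\}}$ and set $x = 0$'' trick for the low end; you instead isolate the one-term properness lemma (a single $a \in B^\bullet$ in $N_B$ must be $0$) and run forward/backward induction on the relations $\bar c_i \preccurlyeq d_{i-1} - d_i$. The underlying mechanism is identical in both cases --- examining the relation at a degree where the $\In_1 f$ coefficient vanishes forces the corresponding $d_i$ to be null, hence zero --- so this is a more explicit rendering of the paper's terse argument rather than a genuinely different route. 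One small virtue of your write-up is that it states and proves the properness lemma cleanly, whereas the paper's appeal to ``no zero-divisors'' is a slightly imprecise shorthand for the same fact.
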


    These facts are the same as for polynomials over a field. For instance, we know that $\deg g = \deg(\In_1 f) - 1$ because there are no zero-divisors in an idyll. For the smallest non-zero coefficient, we can write $f = x^{d_0}f_0$ and $g = x^kg_0$ where $k$ is maximal. If $k \neq d_0$ then we can divide both sides of $\In_1 f \preccurlyeq (x - 1) g$ by $x^{\min\{k, d_0\}}$ and set $x = 0$ (i.e.\ consider the relation in degree $0$) to get a contradiction.
    \claimqed

    Next, let us describe how to lift $g$ on each of the left, middle and right parts of the Newton polygon. We will start with the middle since $\In_1 f$ is supported there.

    \begin{claim}
        We have $\sum_{I_M} c_i x^i \preccurlyeq (x - 1) g$ (with no changes to $g$).
    \end{claim}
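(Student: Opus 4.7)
The claim is coefficient-wise in $C[x]$: writing $P \coloneqq \sum_{I_M} c_i x^i - (x-1) g$, I need the degree-$k$ coefficient of $P$ to lie in $N_C$ for every $k$, where coefficient sequences are extended by zero outside their support. Since $g$ is supported in $[i_0, i_1-1]$ by the preceding support claim, this coefficient vanishes for $k \notin I_M$, so the real work is at degrees in $I_M$; I split on whether $k \in I$ or $k \in I_M \setminus I$.

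For $k \in I$ we have $v^\bullet(c_k) = 0$ and $\lc^\bullet(c_k) = c_k$ (under the normalization at the top of the proof), so the hypothesis $\In_1 f \preccurlyeq (x-1) g$ in $B[x]$ already gives the relation $c_k - d_{k-1} + d_k \in N_B$. Because $B$ embeds as a full subblueprint of $C$---this is the content of $\im(\iota) = \eq(v, 1)$ from the definition of a tropical extension---the same formal sum lies in $N_C$, handling this case.

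For $k \in I_M \setminus I$ we have $v^\bullet(c_k) > 0$ strictly, while $v^\bullet(-d_{k-1})$ and $v^\bullet(d_k)$ lie in $\{0, \infty\}$. Here I invoke the defining property of $N_C$ for a tropical extension: a sum lies in $N_C$ iff its minimum-valuation sub-sum does. From the $B$-level relation $d_{k-1} - d_k \in N_B$ and the properness of $N_B$ (no single non-zero element is null), either both $d$'s vanish or neither does; in the non-vanishing sub-case the minimum-valuation sub-sum of $c_k - d_{k-1} + d_k$ is $-d_{k-1} + d_k \in N_B \subseteq N_C$, closing the argument.

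The main obstacle is the residual sub-case in which $d_{k-1} = d_k = 0$ but $c_k \neq 0$: then the minimum-valuation sub-sum degenerates to $c_k$ alone, which is not in $N_C$. I would eliminate this by a structural analysis of the zero blocks of $g$: using the coefficient-wise relations in $B[x]$, a zero of $g$ in the interior propagates forwards through $I_M \setminus I$ and can only be initiated at an index in $I$ (where $d_{k-1} = \lc^\bullet(c_k)$ is forced). Combined with the non-vanishing endpoints $d_{i_0}, d_{i_1-1} \neq 0$, this controls the zero pattern of $g$ precisely enough to match the vanishing of the corresponding $c_k$'s, ruling out the degenerate configuration. This bookkeeping is where I expect the main technical effort to live.
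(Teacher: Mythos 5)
Your reduction to coefficient-wise relations and your first two cases are exactly the paper's argument: for $k \in I$ the relation $c_k \preccurlyeq d_{k-1} - d_k$ transfers from $B$ to $C$ because $\im(\iota) = \eq(v,1)$ makes $B$ a full subblueprint, and for $k \in I_M \setminus I$ with $d_{k-1}, d_k$ not both zero the minimum-valuation terms are the $d$'s, so the defining property of $N_C$ applies. The paper's proof consists of precisely these two observations and stops there; it tacitly assumes the minimum-valuation part of $-c_k + d_{k-1} - d_k$ is $d_{k-1} - d_k$, i.e.\ it overlooks the sub-case you isolate.

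That residual sub-case is genuine, but your plan for closing it cannot succeed, because the degenerate configuration is realizable: inside a zero block of $g$ the relations over $B$ only force $(\In_1 f)_k = 0$, i.e.\ $k \notin I$, i.e.\ $v^\bullet(c_k) > 0$ --- they do \emph{not} force $c_k = 0_C$. Concretely, take $C = \TR = \S[\R]$ and $f = -1 + x + tx^2 - x^3 + x^4$, so $I = \{0,1,3,4\}$, $i_0 = 0$, $i_1 = 4$, $c_2 = t \neq 0$, and $\In_1 f = -1 + x - x^3 + x^4$ admits the legitimate factorization $\In_1 f \preccurlyeq (x-1)(1 + x^3)$ over $\S$. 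Here $d_1 = d_2 = 0$, and the degree-$2$ relation $c_2 \preccurlyeq d_1 - d_2 = 0$ asks for the single nonzero term $-t$ to lie in $N_{\TR}$, which fails; so the claim as stated (``with no changes to $g$'') is false for this $g$, and no bookkeeping about zero blocks can rescue it. What is true, and what Theorem~\ref{thm:lifting} actually needs, is the weaker statement that such $d_k$'s may be redefined to elements of positive valuation (using wholeness; e.g.\ $\tilde d_1 = t$, $\tilde d_2 = 0$ works above), which still gives $\In_1 \tilde g = g$ --- in other words the middle region must be treated by the same kind of correction as $I_L$ and $I_R$, rather than left untouched. So your instinct that the hard part lives exactly here is right, but the effort should go into amending the claim and the construction, not into proving $c_k = 0$; and it is worth noting that the obstruction you found is a gap in the paper's own two-sentence proof of this claim.
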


    To say that $\In_1 f \preccurlyeq (x - 1) g$ means that $c_i \preccurlyeq d_{i - 1} - d_i$ for $i \in I$. But it also means that $0 \leqslant d_{i - 1} - d_i$ for $i \in I_M \setminus I$. Now, if $c \in C^\bullet$ has a positive valuation, then we also have $c \preccurlyeq d_{i - 1} - d_i$ since by definition of $N_C$, a relation holds in $C$ if and only if it holds among just the terms of minimal valuation. Therefore $c_i \preccurlyeq d_{i - 1} - d_i$ for all $i \in I_M$.
    \claimqed

    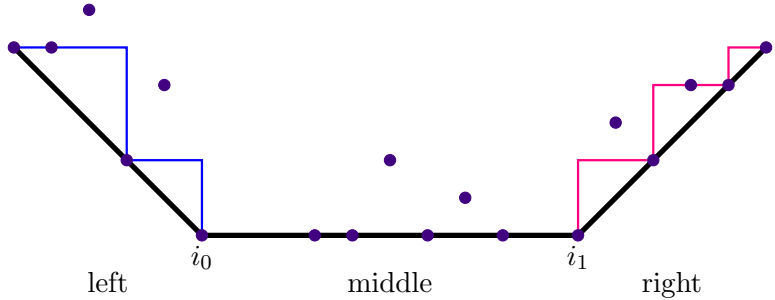
\begin{figure}[tbp]
        \centering

        \begin{tikzpicture}[scale=0.5]
            \draw[line width=0.3mm, blue] (0, 5) -- (3, 5) -- (3, 2) -- (5, 2) -- (5,0);

            \draw[line width=0.3mm, mypink] (15, 0) -- (15, 2) -- (17, 2) -- (17, 4) -- (19, 4) -- (19, 5) -- (20, 5);

            \draw[line width=0.7mm] (0, 5) -- (5, 0) -- (15, 0) -- (20, 5);

            \draw (2.5, 0) node[below=10] {left};
            \draw (10, 0) node[below=10] {middle};
            \draw (17.5, 0) node[below=10] {right};

            \draw (5, 0) node[below] {$i_0$};
            \draw (15, 0) node[below] {$i_1$};

            \foreach \x/\y in {0/5, 1/5, 2/6, 3/2, 4/4, 5/0,
            8/0, 9/0, 10/2, 11/0, 12/1, 13/0, 15/0, 20/5,
            16/3, 17/2, 18/4, 19/4}
                \filldraw[mypurple] (\x,\y) circle (1.5mm);

        \end{tikzpicture}

        \caption{Newton polygon describing the construction}
        \label{fig:construction}
    \end{figure}

    Next we look at $I_L$. Here, we will begin by redefining $d_0 = -c_0$. After that, there are three kinds of points in the Newton polygon/indices $i$. First, there are the points where $v^\bullet(c_i) < \min\{v^\bullet(c_k) : k < i\}$ (in the diagram these are where the \textcolor{blue}{blue staircase} on the left descends). Second, there are points where $v^\bullet(c_i) = \min\{v^\bullet(c_k) : k < i\}$ (points along the flats of the staircase). Then finally, there are points where $v^\bullet(c_i) > \min\{v^\bullet(c_k) : k < i\}$ (points above the staircase).

    For the points where the staircase descends, we define $d_i = -c_i$. For the points on or above the flats of the staircase, inductively define $d_i$ to be any element for which $d_i \preccurlyeq d_{i - 1} - c_i$ (making use of the wholeness axiom). Note that because $v$ is a valuation, $v^\bullet(d_i) \ge \min\{v^\bullet(d_{i - 1}), v^\bullet(c_i)\}$ and so these have a positive valuation if $i < i_0$. We make these redefinitions for all $i \in I_L$.

    \begin{claim} \label{claim:left}
        We have $\In_1 \tilde{g} = g$ and $\sum_{I_L \cup I_M} c_i x^i \preccurlyeq (x - 1)\tilde{g}$.
    \end{claim}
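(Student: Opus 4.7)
The claim has two parts --- $\In_1 \tilde g = g$ and $\sum_{I_L \cup I_M} c_i x^i \preccurlyeq (x-1)\tilde g$ --- and my plan is to verify them separately, with the second broken into a case analysis driven by the blue staircase on the Newton polygon. Throughout, I rely on the definition of $N_C$ from Definition~\ref{def:tropext}: a formal sum is null iff its minimum-valuation stratum is null in $B$.

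For $\In_1 \tilde g = g$, the key input is Claim~\ref{claim:support}, which places the support of the original $g$ inside $[i_0, i_1 - 1] \subseteq I_M$ with coefficients of valuation $0$. The modifications on $I_L$ replace zeros either by $-c_i$ or by an element chosen via wholeness so that $d_i \preccurlyeq d_{i-1} - c_i$. I would run a short induction using the ultrametric inequality $v^\bullet(d_i) \geq \min\{v^\bullet(d_{i-1}), v^\bullet(c_i)\}$ to conclude $v^\bullet(d_i) > 0$ for all $i \in I_L$, so the minimum-valuation stratum of $\tilde g$ is exactly the support of $g$. The definition of $\In_1$ then yields $\In_1 \tilde g = g$ directly.

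For the factorization, I would verify $c_i \preccurlyeq d_{i-1} - d_i$ index by index. On $I_M$ the required relations come from the hypothesis $\In_1 f \preccurlyeq (x-1)g$; the only modified neighbor in this range is $d_{i_0 - 1}$, which went from $0$ to an element of strictly positive valuation, and since $v^\bullet(c_{i_0}) = v^\bullet(d_{i_0}) = 0$ the minimum-valuation stratum of $-c_{i_0} + d_{i_0 - 1} - d_{i_0}$ is untouched, so the relation remains in $N_C$. For $i \in I_L$ on or above a flat, the defining inequality $d_i \preccurlyeq d_{i-1} - c_i$ is equivalent by reversibility to the target $c_i \preccurlyeq d_{i-1} - d_i$, so there is nothing to prove. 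At a descent index, the choice $d_i = -c_i$ makes the minimum-valuation stratum of $-c_i + d_{i-1} - d_i$ reduce to $-c_i + c_i$, which is null because $1 + (-1) \in N_B$ after normalizing by $c_i$.

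The main obstacle --- and the reason the three-case construction is necessary --- is the valuation bookkeeping at descent indices. I would maintain the inductive invariant $v^\bullet(d_j) \geq \min\{v^\bullet(c_k) : k \leq j\}$ along $I_L$; combined with the descent hypothesis $v^\bullet(c_i) < \min\{v^\bullet(c_k) : k < i\}$, this yields the strict inequality $v^\bullet(d_{i-1}) > v^\bullet(c_i)$, which is precisely what forces $d_{i-1}$ out of the minimum-valuation stratum of the three-term sum. Once this invariant is in place, every local check reduces to a relation in $B$ (or in a $B$-torsor) that is either the inherited factorization hypothesis or the reversibility axiom $1 + (-1) \in N_B$.
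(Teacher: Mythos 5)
Your proposal is correct and follows essentially the same route as the paper: show the redefined $d_i$'s on $I_L$ have positive valuation (so $\In_1\tilde g = g$), then verify $c_i \preccurlyeq d_{i-1} - d_i$ index by index, splitting into the descent case (minimal-valuation stratum is $-c_i + c_i$) and the flat/above case (definition of $d_i$ is literally the target relation). Your explicit inductive invariant $v^\bullet(d_j) \geq \min\{v^\bullet(c_k) : k \leq j\}$, used to justify $v^\bullet(d_{i-1}) > v^\bullet(c_i)$ at a descent index, makes precise a step the paper states without proof, which is a welcome clarification rather than a departure.
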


    For the first part of the claim, we note that based on how we have redefined $d_i$, any time we changed a value, it was a zero value becoming a value with a positive valuation.

    We need to check that $c_i \preccurlyeq d_{i - 1} - d_i$ for $i = 1, \dots, i_0 - 1$. We have already verified this for $i = i_0, \dots, i_1$ with the exception that now for $i_0$, we have $d_{i_0 - 1} \neq 0$, this change is handled below in Case 1.
    
    To start, the relation $c_0 \preccurlyeq 0 - d_0$ holds by definition. From there, we proceed by induction.

    Case 1: if we are at a point where the staircase descends, then we have $v^\bullet(c_i) = v^\bullet(d_i) < v^\bullet(d_{i - 1})$. Here the relation $c_i \preccurlyeq d_{i - 1} - d_i$ holds because it holds among the minimal valuation terms: $c_i \preccurlyeq -d_i$.

    Case 2: if we are on or above one of the flats, then the definition $d_i \preccurlyeq d_{i - 1} - c_i$ is equivalent to $c_i \preccurlyeq d_{i - 1} - d_i$.
    \claimqed

    Lastly, we need to define $d_i$ for $i \in I_R$ and also $d_{i_1}$.
    We will start by defining another staircase function: $j(i) = \min\{k : v^\bullet(c_k) \text{ is minimal and } k > i \}$. In the diagram, $j(i)$ is the next $x$-coordinate along the \textcolor{mypink}{pink staircase} on the right. When $j(i - 1) \neq j(i)$, we will define $d_{i - 1} = c_i$. Otherwise, we let $d_i$ be any element satisfying $d_i \preccurlyeq d_{i - 1} + c_{i+1}$.

    \begin{claim} \label{claim:right}
        We have $\In_1 \tilde{g} = g$ and $\sum c_i x^i \preccurlyeq (x - 1)\tilde{g}$.
    \end{claim}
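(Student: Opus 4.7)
My plan mirrors the proof of Claim~\ref{claim:left}, reflecting the construction from the descending left staircase to the ascending right one. The essential content is that redefining $d_i$ for $i \geq i_1$ does not destroy what was achieved previously, and it extends the factorization relation through the right tail of the polynomial. To establish $\In_1 \tilde g = g$, I note that $g$ originally had support in $\{i_0, \ldots, i_1 - 1\}$, so it suffices to verify that every newly-defined $d_i$ (with $i \geq i_1$) has strictly positive valuation, and hence disappears under $\In_1$. This follows by induction on $i$: if $d_{i-1}$ is set equal to $c_i$ with $i > i_1$, then $i \notin I$ and so $v^\bullet(c_i) > 0$; if instead $d_i$ is chosen by the alternative rule involving $d_{i-1}$ and $c_{i+1}$, both of which have strictly positive valuation (by induction and because $i + 1 > i_1$), the ultrametric property \ref{prop:val:ultrametric} of the valuation forces $v^\bullet(d_i) > 0$ as well.

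Next I verify the coefficient-wise relations $c_i \preccurlyeq d_{i-1} - d_i$ at every index. For $i < i_1$ these are already given by Claim~\ref{claim:left}. At $i = i_1$, Claim~\ref{claim:left} supplies the relation with the \emph{old} $d_{i_1} = 0$; since the new $d_{i_1}$ has strictly positive valuation while $c_{i_1}$ and $d_{i_1 - 1}$ both have valuation zero, the defining property of a tropical extension (Definition~\ref{def:tropext})---namely that a relation in $C$ holds if and only if it holds among the terms of minimum valuation---lets the relation persist with the new $d_{i_1}$ substituted in. For $i > i_1$ I case-split according to which rule produced $d_{i-1}$ and $d_i$: when $d_{i-1} = c_i$, the relation collapses, among the minimum-valuation terms, to the trivial $c_i \preccurlyeq c_i$ (the term $-d_i$ being of strictly larger valuation and hence dropping out); when instead $d_i$ is chosen by the defining inequality, the desired relation is exactly what the construction delivers after a rearrangement using the reversibility of the idyll.

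The main obstacle I anticipate is careful bookkeeping, analogous to the left case: determining for each $i$ which of the two rules produced $d_{i-1}$ and which produced $d_i$, and then verifying in each combination of cases that the valuations align so that the relation reduces correctly to one supported on the minimum-valuation terms. Termination of the recursion is automatic because $f$ has finite support, so for $i$ large enough $c_i = 0$ and taking $d_i = 0$ suffices; this also matches with $\tilde g$ being an honest polynomial of finite degree.
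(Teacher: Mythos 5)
Your argument is correct and follows essentially the same route as the paper's proof: you show $\In_1 \tilde{g} = g$ by checking that every redefined $d_i$ ($i \ge i_1$) has strictly positive valuation (via the ultrametric inequality for the wholeness-chosen ones), and you verify $c_i \preccurlyeq d_{i-1} - d_i$ by the same case split the paper uses—at transition indices the relation reduces among minimal-valuation terms to $c_i \preccurlyeq d_{i-1} = c_i$, at the remaining indices it holds because $d_i$ was chosen by wholeness precisely to make it hold, and at the boundary $i_1$ the previously established relation persists since the new $d_{i_1}$ has positive valuation. One small correction of labels: the rearrangement in your final case needs only that the null-ideal is an ideal (multiply the relation $0 \leqslant -d_i + d_{i-1} + c_{i+1}$ by $\epsilon = -1$), not ``reversibility,'' which is a pasture axiom that idylls are not assumed to satisfy.
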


    As with the last claim, the first part just comes down to verifying that any time we have redefined a zero-valued $d_i$, that the new value has a positive valuation. This is true here because $v^\bullet(c_i) > 0$ for any $i > i_1$ by definition of $i_1$.

    Now we need to check that $c_i \preccurlyeq d_{i - 1} - d_i$ for $i = i_1 - 1, i_1, i_1 + 1, \dots$. The indices in $I_L \cup I_M$ have already been checked except for $i_1$ since we have given a new value to $d_{i_1}$. Again we have two cases:

    Case 1: if $j(i - 1) \neq j(i)$ then that is because $v^\bullet(c_i) < v^\bullet(c_k)$ for any $k > i$. Here we have $d_{i - 1} = c_i$ and $v^\bullet(d_i) > v^\bullet(d_{i - 1})$. Thus $c_i \preccurlyeq d_{i - 1} - d_i$ because the minimal valuation part of this relation is $c_i \preccurlyeq d_{i - 1}$.

    Case 2: if $j(i - 1) = j(i)$ then we proceed by induction. The sequence $j(i)$ is non-decreasing and as a base case, we know that $c_i \preccurlyeq d_{i - 1} - d_i$ every time $j(i - 1) < j(i)$. Given $c_i \preccurlyeq d_{i - 1} - d_i$ and $j(i - 1) = j(i)$, we will check that $c_{i + 1} \preccurlyeq d_{i} - d_{i + 1}$. Indeed, this is exactly how we defined $d_{i + 1}$, so that this relation would hold.
\end{proof}

Finally, we finish the proof of Theorem~\ref{thm:main}. We do what we have done before: take a factorization sequence of $\In_1 f$ of maximal length and lift it to a factorization sequence of $f$. That gives us
\[
    \mult^C_a(f) \ge \mult^B_{\lc^\bullet(a)} (\In_a f).
\]
Combining this with Lemma~\ref{lem:main-first-ineq}, we obtain Theorem~\ref{thm:main}.

\section{Examples and Connections} \label{sec:connections}
Theorems~\ref{thm:split} and \ref{thm:main} imply some results of previous papers. First of all, it gives a new proof of Theorem~D from Baker and Lorscheid's paper \cite{BL}.

\begin{corollary}
    Let $f \in \T[x]$ and for $a \in \R$, define $v_a(f)$ to be $j - i$ if the edge in the Newton polygon of $f$ with slope $-a$ has endpoints $(i, c_i)$ and $(j, c_j)$. If there is no such edge, define $v_a(f) = 0$.
    
    Given this, we have $\mult^\T_a(f) = v_a(f)$.
\end{corollary}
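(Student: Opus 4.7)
The approach is to invoke Theorem~\ref{thm:split} for the split tropical extension $\T = \K[\R]$ (taking $B = \K$ and $\Gamma = \R$). For every $a \in \R \subset \T^\times$ the leading coefficient is $\lc^\bullet(a) = 1_\K$, so the theorem reduces the calculation to $\mult^\T_a(f) = \mult^\K_{1_\K}(\In_a(f))$. Unpacking Definition~\ref{def:split-lc-in}, one has $\In_a(f) = \sum_{k \in I} x^k \in \K[x]$, where $I = \{k : v^\bullet(c_k) + ka \text{ is minimal}\}$ is exactly the set of $k$ such that $(k, v^\bullet(c_k))$ lies on the edge of $\newt(f)$ of slope $-a$. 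If no such edge exists then $|I|\le 1$, so $\In_a(f)$ is either $0$ or a monomial; I will show below that in either case its multiplicity at $1_\K$ is $0$, matching $v_a(f)=0$. Otherwise set $i := \min I$ and $j := \max I$, so that $v_a(f) = j-i$, and it remains to prove the purely combinatorial identity
\[
    \mult^\K_{1_\K}\Bigl(\sum_{k \in I} x^k\Bigr) = j - i.
\]

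More generally, I claim $\mult^\K_{1_\K}(p) = \max\supp(p) - \min\supp(p)$ for every nonzero $p \in \K[x]$, with the convention that a monomial has width $0$. Since $\epsilon_\K = 1_\K$, a factorization $p \preccurlyeq (x - 1_\K)g$ with $p = \sum c_k x^k$ and $g = \sum d_k x^k$ translates into the coefficientwise condition $c_k + d_{k-1} + d_k \in N_\K$; equivalently, the number of nonzero entries among $\{c_k, d_{k-1}, d_k\}$ is never exactly one. For the lower bound I would exhibit an explicit factor: set $d_k = 1_\K$ for $\min\supp(p) \le k \le \max\supp(p) - 1$ and $d_k = 0_\K$ elsewhere. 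A case check on each degree---at the endpoints $c_k$ and one of $d_{k-1}, d_k$ together supply two $1$'s; at interior degrees the pair $d_{k-1}, d_k$ already does; outside, all three entries are zero---confirms $p \preccurlyeq (x-1_\K)g$, and the quotient has $\min\supp(g) = \min\supp(p)$ and $\max\supp(g) = \max\supp(p) - 1$. Iterating $j - i$ times reduces $\sum_{k \in I} x^k$ to the monomial $x^i$, giving $\mult^\K_{1_\K}\bigl(\sum_{k \in I} x^k\bigr) \ge j - i$.

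For the matching upper bound I argue that any factorization $p \preccurlyeq (x - 1_\K)g$ with $g \ne 0$ forces $\min\supp(g) = \min\supp(p)$ and $\max\supp(g) \le \max\supp(p) - 1$: for $k$ below $\min\supp(p)$ the relation $d_{k-1} + d_k \in N_\K$ combined with eventual vanishing of $d_k$ propagates $d_k = 0$ upward, and then the constraint $1_\K + 0 + d_m \in N_\K$ at $m := \min\supp(p)$ forces $d_m = 1_\K$; dually, at $k = N+1$ with $N := \max\supp(g)$ the sum $c_{N+1} + d_N + 0$ already contains a single $1_\K$ unless $c_{N+1} = 1_\K$, so $N + 1 \le \max\supp(p)$. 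Hence each factorization strictly decreases the width $\max\supp - \min\supp$; after at most $j - i$ factorizations one reaches a monomial, which cannot be factored further by the same boundary analysis. The main obstacle is keeping the Krasner bookkeeping precise despite the degenerate arithmetic $-1_\K = 1_\K$, but once the three-term relation is characterized as ``not exactly one $1$,'' both the explicit factor and the rigidity of the support of $g$ reduce to finite case analyses.
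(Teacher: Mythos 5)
Your proof is correct and takes essentially the same route as the paper: invoke Theorem~\ref{thm:split} for the split extension $\T=\K[\R]$ to reduce to $\mult^\K_{1}(\In_a f)$, where $\In_a f$ is supported exactly on the lattice points of the edge of slope $-a$. The only difference is that you fully verify the Krasner width formula $\mult^\K_1(p)=\max\supp(p)-\min\supp(p)$ (both the explicit quotient and its optimality via the support rigidity of any factor $g$), a verification the paper delegates to Example~\ref{ex:krasner-factorization} and explicitly leaves to the reader.
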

\begin{proof}
    By Theorem~\ref{thm:split}, we have $\mult^\T_a(f) = \mult^\K_1(\In_a f)$ and $\In_a f$ is the sum of $x^k$ over all $k$ such that $(k, c_k)$ is in the edge of slope $-a$. And for the Krasner idyll, we have $\mult^\K_1(x^i + \dots + x^j) = j - i$ (Example~\ref{ex:krasner-factorization}).
\end{proof}

Next, let us have a look at the extension $\TR = \S[\R] \in \Ext^1(\R, \S)$ which was the main focus of \cite{G}.

\begin{corollary}
    Let $f \in \TR[x]$ and $a = (+1)^\gamma \in \TR^\bullet$. Then $\mult^\TR_a f$ equals the number of sign changes among the coefficients corresponding to points in $\newt f$ inside the edge of slope $-\gamma$.
\end{corollary}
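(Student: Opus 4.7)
The plan is to reduce the statement to Descartes's rule of signs for the sign idyll via the main theorem, exploiting the fact that $\TR = \S[\R]$ is a split tropical extension of $\R$ by $\S$ (Example~\ref{ex:trop-reals}).

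First I apply Theorem~\ref{thm:split} with $B = \S$, $\Gamma = \R$, $C = \TR$. For $a = (+1)^\gamma \in \TR^\bullet$, we have $\lc^\bullet(a) = +1 \in \S$, so the theorem gives
\[
\mult^\TR_a(f) = \mult^\S_{+1}(\In_\gamma f).
\]
By Definition~\ref{def:split-lc-in}, the polynomial $\In_\gamma f \in \S[x]$ is obtained from $f = \sum s_i t^{\gamma_i} x^i$ by restricting to the index set $I = \{i : \gamma_i + i\gamma \text{ is minimal}\}$ and applying $\lc^\bullet$; i.e., $\In_\gamma f = \sum_{i \in I} s_i x^i$, where $s_i \in \{+1,-1\}$ is the sign of the leading coefficient of the $i$-th coefficient of $f$. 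But by definition of the Newton polygon (Section~\ref{sec:newt-poly}), the set $I$ is exactly the set of indices $i$ such that the point $(i, \gamma_i)$ lies on the edge of $\newt f$ of slope $-\gamma$.

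It therefore suffices to show that for any polynomial $g = \sum_{i \in I} s_i x^i \in \S[x]$ with $s_i \in \{+1,-1\}$ (so with no missing internal $0$'s relative to its support), the multiplicity $\mult^\S_{+1}(g)$ equals the number of sign changes in the sequence $(s_i)_{i \in I}$. This is the content of Descartes's rule of signs as interpreted in the sign idyll, which is exactly \cite[Proposition~3.6 / Theorem~C]{BL} (the motivating example from the introduction, $\mult^\S_{[1]}(f)$ counts adjacent pairs of opposite signs). Combining these two steps yields the corollary.

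I do not foresee a substantive obstacle: the only subtlety is the bookkeeping that the coefficients of $\In_\gamma f$ sit in bijection with the lattice points of $\newt f$ on the relevant edge (not merely the endpoints), and that ``sign changes'' in the statement refers precisely to sign changes in the sequence indexed by this set of interior and boundary lattice points---this matches exactly the coefficient sequence of $\In_\gamma f$ once we drop the $i \notin I$ indices. With that identification, the corollary is an immediate combination of Theorem~\ref{thm:split} with the sign-idyll Descartes's rule.
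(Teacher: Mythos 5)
Your proposal is correct and follows essentially the same route as the paper's proof: apply Theorem~\ref{thm:split} to the split extension $\TR = \S[\R]$ to reduce to $\mult^\S_{+1}(\In_\gamma f)$, identify the support of $\In_\gamma f$ with the lattice points on the relevant edge of $\newt f$, and then invoke Descartes's rule for the sign hyperfield (\cite[Theorem~C]{BL}). The one parenthetical that deserves more care --- ``so with no missing internal $0$'s relative to its support'' --- is slightly misleading since $\In_\gamma f$ can have gaps in its support; what matters (and what the paper says explicitly) is that the sign-change count is taken over the nonzero coefficient sequence $(\lc^\bullet(c_i))_{i\in I}$, ignoring zeroes, which is exactly what you do.
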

\begin{proof}
    By Theorem~\ref{thm:split}, we have $\mult^\TR_a f = \mult^\S_{+1} \In_\gamma f$ where with the notation we have been using, $\In_\gamma f = \sum_I \lc^\bullet(c_i)x^i$ and $I$ is the set of all $i$ such that $(i, v^\bullet(c_i))$ is contained in the edge of $\newt f$ with slope $-\gamma$.

    Next, by \cite[Theorem~C]{BL}, $\mult^\S_{+1} \In_\gamma f$ is equal to the number of sign changes in the sequence $(\lc^\bullet(c_i) : i \in I)$---ignoring zeroes.
\end{proof}

\begin{remark} \label{rem:phase-example}
    The next place to look would be at multiplicities over $\TC$. We still have that $\mult^\TC_a f = \mult^\P_{\lc^\bullet(a)} \In_{v^\bullet(a)} f$ but there is no existing simple description of $\mult^\P$. In fact, polynomials over $\P$ have some pathologies as pointed out by Philipp Jell \cite[Remark~1.10]{BL}: the polynomial $x^2 + x + 1 \in \P[x]$ has a root at $e^{i\theta}$ for all $\pi/2 < \theta < 3\pi/2$. In contrast, polynomials over $\K$ or $\S$ or tropical extensions thereby, can only have finitely many roots.
\end{remark}

\subsection{Higher rank}
Combining Lemma~\ref{lem:inductive-in} with the main theorem, tells us how to compute multiplicities of polynomials in the context of a higher-rank valuation.

\begin{example} \label{ex:higher-rank}
    Consider the following polynomial over $\C(s,t)$ with valuation $v^\bullet(s^mt^n) = (m, n) \in (\R^2, \lex)$:
    \begin{align*}
        f&= (x - t)(x - s)(x - st)(x - 2st) \\
        &= x^4 \\
        &\quad - (t + s + 3st)x^3 \\
        &\quad + (st + 3st^2 + 3s^2t + 2s^2t^2)x^2 \\
        &\quad - (3s^2t^2 + 2s^2t^3 + 2s^3t^2)x \\
        &\quad + 2s^3t^3.
    \end{align*}

    Suppose we want to know how many roots of $f$ have valuation $(1,1)$. I.e.\ what is $\mult^{\T_2}_{(1,1)} \operatorname{trop}(f)$ where
    \[
        \operatorname{trop}(f) = (3,{\color{blue}3}) + (2,{\color{blue}2})x + (1,{\color{blue}1})x^2 + (0,{\color{blue}1})x^3 + x^4 \in \T_2[x]?
    \]
    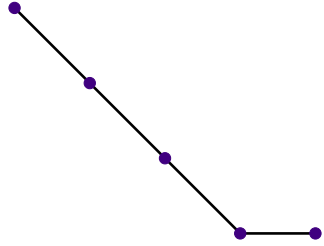
\begin{figure}[htbp]
        \centering

        \begin{tikzpicture}
            \draw[line width=0.35mm] (0, 3) -- (3, 0) -- (4, 0);

            \foreach \x/\y in {0/3, 1/2, 2/1, 3/0, 4/0}
                \filldraw[mypurple] (\x,\y) circle (0.75mm);
        \end{tikzpicture}

        \caption{Newton polygon of $f$ with respect to $v_s$ in Example~\ref{ex:higher-rank}}
        \label{fig:s-val}
    \end{figure}
    By Lemma~\ref{lem:inductive-in}, we start by considering the $s$-valuation $v_s(s^mt^n) = m$ and draw a Newton polygon based on the first coordinate of each coefficient (Figure~\ref{fig:s-val}). Then, we pick out the line segment of slope $-1$ to create the initial form
    \[
        \In_1 \operatorname{trop}(f) = {\color{blue}3} + {\color{blue}2}x + {\color{blue}1}x^2 + {\color{blue}1}x^3 \in \T[x].
    \]

    Next, we draw the Newton polygon of this initial form with respect to the $t$-valuation (Figure~\ref{fig:t-val}).
    \begin{figure}[htbp]
        \centering

        \begin{tikzpicture}
            \draw[line width=0.35mm] (0, 3) -- (2, 1) -- (3, 1);

            \foreach \x/\y in {0/3, 1/2, 2/1, 3/1}
                \filldraw[mypurple] (\x,\y) circle (0.75mm);
        \end{tikzpicture}

        \caption{Newton polygon of $\In_1 \operatorname{trop}(f)$ in Example~\ref{ex:higher-rank}}
        \label{fig:t-val}
    \end{figure}
    Here we can take another initial form to get $\In_1(\In_1 \operatorname{trop}(f)) = 1 + x + x^2 \in \K[x]$. So $\mult^{\T_2}_{(1,1)} \operatorname{trop}(f) = 2$. \qedhere
\end{example}

\subsection{Connection to polynomials over fields}
Let us summarize what is know about a question which has been discussed before in \cite{BL} and \cite{G}: given a morphism $\vf$ from a field $K$ to an idyll $B$, and a polynomial $F \in K[x]$ lying over $f \in B[x]$, what can we say about multiplicities in $K$ compared to in $B$?

There are two questions here: local and global. Locally, we have the following inequality \cite[Proposition~B]{BL}:
\begin{equation} \label{eq:prop-b-ineq}
    \mult^B_b f \ge \sum_{a \in \vf^{-1}(b)} F.
\end{equation}

Globally, we know that the sum of the multiplicities in $B$ might be infinite (Remark~\ref{rem:phase-example}).

Here we will give a partial answer to the question that Baker and Lorscheid asked about when a hyperfield satisfies the degree bound, which in Definition~\ref{def:degreebound} we defined as:
\[
    \sum_{b \in B} \mult^B_b f \le \deg f
\]
for all polynomials $f \in B[x]$. By their Proposition~B, the corollary of this bound is that \eqref{eq:prop-b-ineq} becomes an equality.

\degreeboundthm*

\begin{proof}
    Let $f \in C[x]$ be a polynomial. Since any one polynomial only requires a finite-rank value group to define, we are going to again assume that $\Gamma = \R$, use induction to extend to any finite-rank value group, and then use the fact that any polynomial lives in a finite-rank sub-extension.

    With this reduction, consider the polynomial $v(f) \in \T[x]$ via the morphism $v : C \to \T$. The Newton polygon will have a finite number of edges and hence a finite number of non-monomial initial forms, say $\In_{\gamma_k} v(f)$ for $k = 1,\dots,d$. Now, let $a_k \in B^{\gamma_k} \subseteq C$ be a representative of $\gamma_k$. Each initial form is not-necessarily-canonically isomorphic to a polynomial in $B[x]$ and we are going to use the degree bound in $B$ to get a bound in $C$.

    First of all, we partition the roots of $f$ by valuation so
    \[
        \sum_{a \in C} \mult^C_a f = \mult^C_0 f + \sum_{k = 1}^d \sum_{a \in B^{\gamma_k}} \mult^C_a f.
    \]
    Next, we will show that
    \begin{equation} \label{eq:degree-bound-in}
        \sum_{a \in B^{\gamma_k}} \mult^C_a f \le \deg \In_{a_k} f - \mult^C_0 \In_{a_k} f.
    \end{equation}
    This will suffice to prove the theorem because
    \[
        \sum_{k = 1}^d \left( \deg \In_{a_k} f - \mult^C_0 \In_{a_k} f \right) = \deg f + \mult^C_0 f.
    \]
    This holds because the width of the Newton polygon is equal to the sum of the width of each edge in the polygon.

    To show \eqref{eq:degree-bound-in}, apply some transformation $c_k^{-1}f(a_kx)$ to get a polynomial in $\OC[x]$. Then apply $\ev_0$ to get a polynomial in $B[x]$. By Theorem~\ref{thm:main} and Theorem~\ref{thm:lifting}, there is a equality between multiplicities of valuation $0$ roots of $c_k^{-1}f(a_kx)$ and non-zero roots of $\ev_0(c_k^{-1}f(a_kx)) \in B[x]$. This shows \eqref{eq:degree-bound-in}.
\end{proof}

Since we know that the Krasner and sign hyperfields satisfy the degree bound, we have the following corollary.

\begin{corollary} \label{cor:degree-bound}
    If $B$ is either a field or $\K$ or $\S$, and $C \in \Ext^1(\Gamma, B)$ then $C$ satisfies the degree bound. For example, this applies to $C = \K, \S, \T, \TR$ and the higher-rank versions $\T_n = \K[\R^n]$ and $\S[\R^n]$.
\end{corollary}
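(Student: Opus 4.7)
The plan is to reduce Corollary E directly to Theorem D via the Bowler--Su classification. Bowler and Su show that a hyperfield is stringent precisely when it is a tropical extension (equivalently, in their terminology, a semidirect product) of an ordered Abelian group $\Gamma$ by a field, by $\K$, or by $\S$. By Proposition~\ref{prop:BS-equiv}, these semidirect products are tropical extensions in the sense of Definition~\ref{def:tropext}, so Theorem~\ref{thm:degreebound} applies. Hence it suffices to verify the degree bound for the three base cases.

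First I would handle the three base cases. For a field $B = K$, the degree bound is just the classical statement that a polynomial of degree $d$ has at most $d$ roots (counted with multiplicity); the Baker--Lorscheid multiplicity over a field agrees with the classical one, so this is immediate. For $B = \K$, every polynomial $f = \sum_{i \in I} x^i \in \K[x]$ factors as $\mult^\K_0 f = \min I$ and $\mult^\K_1 f = \max I - \min I$ (see the factorization example referenced as Example~\ref{ex:krasner-factorization}), and these are the only possible roots, so $\mult^\K_0 f + \mult^\K_1 f = \max I \le \deg f$. For $B = \S$, the degree bound is Baker and Lorscheid's Theorem~C in \cite{BL}, which says that $\mult^\S_{+1} f + \mult^\S_{-1} f + \mult^\S_0 f \le \deg f$, a hyperfield-theoretic form of Descartes's rule of signs.

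Given these base cases, the proof of the corollary is then a one-line appeal: if $H$ is stringent, write $H \in \Ext^1(\Gamma, B)$ with $B \in \{\text{field}, \K, \S\}$ using \cite[Theorem~4.10]{BS} together with Proposition~\ref{prop:BS-equiv}, and apply Theorem~\ref{thm:degreebound} to conclude that $H$ satisfies the degree bound.

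The main potential obstacle is bookkeeping around the Bowler--Su classification: their statement is phrased for (skew) hyperfields using semidirect products, and one must confirm that the commutative case covered there translates cleanly into our Definition~\ref{def:tropext}. This translation is exactly what Proposition~\ref{prop:BS-equiv} provides, so no new technical work is needed. A minor secondary point is that Theorem~\ref{thm:degreebound}, as stated, takes a single extension $C \in \Ext^1(\Gamma, B)$, matching the one-step form of Bowler--Su's classification; no iteration is required.
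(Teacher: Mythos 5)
Your underlying argument is correct and matches the paper's approach: verify the degree bound for the base idylls (field, $\K$, $\S$), then apply Theorem~\ref{thm:degreebound}. Your explicit verification of the base cases (classical root count for fields, $\mult^\K_0 f + \mult^\K_1 f = \max I$ for $\K$, Baker--Lorscheid Theorem~C for $\S$) is more detailed than the paper, which simply asserts these facts and cites \cite{BL}.

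However, you have confused the statement being proved with the one that follows it. The statement here is Corollary~\ref{cor:degree-bound}, which hypothesizes directly that $C \in \Ext^1(\Gamma, B)$ with $B$ a field, $\K$, or $\S$; there is no mention of stringency. Your invocations of the Bowler--Su classification \cite[Theorem~4.10]{BS} and Proposition~\ref{prop:BS-equiv} are therefore unnecessary for this corollary --- they belong to the proof of Corollary~\ref{cor:stringent} (``every stringent hyperfield satisfies the degree bound''), which is deduced \emph{from} the present corollary. For the statement actually at hand, the proof is simply: $B$ satisfies the degree bound (your base-case checks), and $C$ is by assumption a tropical extension of $\Gamma$ by $B$, so Theorem~\ref{thm:degreebound} applies. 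Strip out the stringent/Bowler--Su framing and your proof aligns exactly with the paper's.
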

\begin{proof}
    Combine Theorem~\ref{thm:degreebound} with \cite[Proposition~B]{BL}.
\end{proof}

Based on a classification of Bowler and Su, we can conclude that so-called \emph{stringent} hyperfields satisfy the degree bound.

\begin{definition}
    A hyperfield (idyll) is stringent if the sum-set $a \boxplus b = \{c : c \curlyeqprec a + b\}$ is a singleton if $a \neq b$.
\end{definition}

\stringentcor*

\begin{proof}
    Combine Corollary~\ref{cor:degree-bound} with Bowler and Su's classification of stringent hyperfields \cite[Theorem~4.10]{BS}.
\end{proof}

\subsubsection{Some open questions}
Corollary~\ref{cor:stringent} leads to several interesting questions.
\begin{question}
    Is there a more direct proof of Corollary~\ref{cor:stringent} which does not rely on Bowler and Su's classification?
\end{question}

\begin{question}
    Is the converse of Corollary~\ref{cor:stringent} true? I.e.\ if a hyperfield satisfies the degree bound, is it necessarily stringent?
\end{question}

\begin{question}
    Baker and Zhang show that a hyperfield is stringent if and only if its associated idyll satisfies a ``strong-fusion axiom'' \cite[Proposition~2.4]{BZ}. If the previous question has a positive answer, can we extend that to pastures or idylls with an additional axiom like strong-fusion?
\end{question}

\appendix
\section{Factorization Rules} \label{sec:factorization}
Within Baker and Lorscheid's paper \cite{BL}, the author's previous paper \cite{G}, and a paper of Agudelo and Lorscheid \cite{AL} are some descriptions of various division algorithms. Agudelo and Lorscheid spell out these algorithms explicitly and in the other two the algorithms are hidden inside the proofs. In this section, we describe these algorithms and explain where and how they appear in each of the aforementioned papers.

\begin{example} \label{ex:krasner-factorization}
    Over the Krasner hyperfield, with $m < n$, we have the following factorization:
    \[
         x^m + \text{any intermediate terms} + x^n \preccurlyeq (x + 1)(x^m + x^{m + 1} + x^{m + 2} + \dots + x^{n - 1}).
    \]
    Moreover, this factorization is optimal (the multiplicity of the quotient is exactly $1$ less). Therefore $\mult^\K_1 f = n - m$ for any polynomial with highest term $x^n$ and lowest term $x^m$.
\end{example}

The existence of this rule was alluded to in \cite{BL} but not spelled out. This rule is easy to verify and this verification is left to the reader.

\begin{example} \label{ex:negative-root-factorization}
    Let $f = \sum s_ix^i$ be a polynomial over the sign hyperfield with no intermediate zeroes between the lowest and highest term. Let $i_0$ be the smallest index for which $s_{i} = s_{i+1}$. Define a new sequence of signs by ``squishing together'' $s_{i_0}$ and $s_{i_0 + 1}$, so
    \[
        \tilde{s_i} = \begin{cases}
            s_i &\text{if } i \le i_0, \\
            s_{i + 1} &\text{if } i > i_0.
        \end{cases}
    \]
    Then $g = \sum \tilde{s_i}x^i$ is a quotient of $f$ by $(x + 1)$ and $\mult^\S_{-1} g$ is exactly one less than $\mult^\S_{-1} f$.

    For instance,
    \[
        1 - x + x^2 {\color{mypink} {}- x^3 - x^4} - x^5 + x^6 \preccurlyeq (1 + x)(1 - x + x^2 {\color{mypink} {}- x^3} - x^4 + x^5). \qedhere
    \]
\end{example}

\begin{example} \label{ex:positive-root-factorization}
    If we apply the previous rule to factoring out $(x - 1)$ by making the substitution $x \mapsto -x$ before and after, we get this rule:
    \[
        \tilde{s_i} = \begin{cases}
            -s_i &\text{if } i \le i_0, \\
            s_{i + 1} &\text{if } i > i_0
        \end{cases}
    \]
    where $i_0$ is now the smallest index where $s_i \neq s_{i + 1}$.
    From the previous example, if we substitute $x \mapsto -x$, the odd coefficients flip. Then, after ``squishing'' the parity changes so we get a different sign before and after the squish.

    For example,
    \[
        1 + x + {\color{mypink} x^2 - x^3} {}+ x^4 - x^5 \preccurlyeq (-1 + x)(-1 - x {\color{mypink} {}- x^2} + x^3 - x^4). \qedhere
    \]
\end{example}

Examples~\ref{ex:negative-root-factorization} and \ref{ex:positive-root-factorization} follow from a more general description which we will see next.

\begin{example}
    Let $f = \sum s_ix^i$ be a polynomial over the sign hyperfield, which for simplicity we assume has a nonzero constant term (otherwise factor out a monomial). Then as in Example~\ref{ex:positive-root-factorization}, let $i_0$ be the smallest index $i$ such that $s_{i} \neq s_{i_0}$.

    Then from left-to-right, define
    \[
        \tilde{s}_i = -s_i = -s_0 \text{ for } i \le i_0
    \]
    and for $i > i_0$, let $\tilde{s}_i = s_{j(i)}$ where $j(i) = \min\{j : j > i \text{ and } s_j \neq 0\}$ (i.e.\ the next non-zero coefficient after $s_i$). Then $g = \sum \tilde{s}_ix^i$ is a quotient of $f$ by $x - 1$ and $g$ has exactly one less sign change---equivalently one less positive root.
\end{example}

This rule first appeared in Baker and Lorscheid's paper \cite[proof of Theorem~C]{BL}. In the author's previous paper, this rule is extended to the tropical real hyperfield \cite[proof of Theorem~A]{G}. In Agudelo and Lorscheid's paper, the rule is adapted to apply to both factoring out $x - 1$ and $x + 1$.

In the context of this paper, the rule is obtained from the proof of Theorem~\ref{thm:lifting} in the paragraphs before Claims~\ref{claim:left} and \ref{claim:right} where we interpret $i_0, i_0 + 1$ as the ``middle.'' For instance, the function $j(i)$ defined in the previous example is a sibling of the function $j(i)$ defined before Claim~\ref{claim:right}. For the left portion, the rule we gave was $\tilde{s}_i \preccurlyeq \tilde{s}_{i - 1} - s_i$ which is certainly true if we define $\tilde{s}_i = -s_0 = -s_1 = -s_2 = \cdots = -s_{i_0}$ for all $i \le i_0$.

\begin{example}
    Let $f$ be a polynomial over the tropical hyperfield and let $a \in \T^\times$ be a root of $f$. Then to get a quotient of $f$ by $(x + a)$, first replace $f$ by $f(ax)$. Then apply Example~\ref{ex:krasner-factorization} to factor the initial form $\In_0 f$. Then lift that factorization to a factorization of $f$ by using the staircase rules illustrated in Figure~\ref{fig:construction} and described in the proof of Theorem~\ref{thm:lifting}.

    Specifically, on the left, let $d_i = \min\{d_{i - 1}, c_i\}$ and on the right, let $d_i = c_{j(i)}$ where $j(i) = \min\{j : j > i \text{ and } c_j \text{ is minimal}\}$.
\end{example}
This rule is also described in \cite{AL} without first making the substitution $f \mapsto f(ax)$. In \cite{BL}, an entirely different approach is given to tropical polynomials via looking at polynomial functions.

% \clearpage
\emergencystretch=1em % fix hbox issues in bibliography
\printbibliography

\end{document}